\documentclass[12pt]{article}
\usepackage{amsmath, amssymb, amsthm}
\usepackage{geometry}
\usepackage{bm} 
\title{Hausdorff compactness and regularity for classes of open sets under geometric constraints}
\author{Mohammed Barkatou \\
ISTM, Department of Mathematics \\
Choua\"{\i}b Doukkali University, Morocco \\
barkatoum@gmail.com\\
https://orcid.org/0000-0001-9193-0497}
\date{}

\newtheorem{theorem}{Theorem}[section]
\newtheorem{proposition}[theorem]{Proposition}
\newtheorem{lemma}[theorem]{Lemma}
\newtheorem{definition}[theorem]{Definition}
\newtheorem{remark}[theorem]{Remark}
\newtheorem{example}[theorem]{Example}
\newtheorem{corollary}[theorem]{Corollary} 

\begin{document}

\maketitle

\begin{abstract}
This article introduces innovative classes of open sets in \(\mathbb{R}^{N}\), where \(N=2, 3\), characterized by a geometric property associated with the inward normal. The focus lies on proving compactness results for the Hausdorff topology within these classes. Furthermore, the paper establishes the equivalence of convergences, encompassing Hausdorff, compact, and characteristic functions, for select classes.  We also investigate the regularity of the thickness function associated with these domains and analyze how the regularity of the fixed convex set \(C\) influences the boundary regularity of the admissible shapes.
\noindent\textbf{Keywords:}Hausdorff topology, Compactness, Shape analysis, Thikness function
\noindent\textbf{2020 Mathematics Subject Classification:}49Q10, 52A30
\end{abstract}

\section{Introduction}
In the field of shape optimization, a fundamental problem arises: We seek to discover \(\Omega^{*} \in \mathcal{O}\) that satisfies the equation:
\[J(\Omega^{*}) = \min_{\Omega \in \mathcal{O}}J(\Omega)\]
Here, \(\mathcal{O}\) represents a set of admissible open sets within the realm of \(\mathbb{R}^{N}\), and \(J\) stands as a functional defined on \(\mathcal{O}\) to \(\mathbb{R}\).
To establish the existence of solutions to this type of problem, it is imperative to carefully select a suitable class of admissible domains.
In [2], the author considered the class of uniformly Lipschitz domains, satisfying the restricted cone property with a given height and angle of the cone (\(\epsilon\)-cone).
In [3], three novel classes of open sets in the general Euclidean space \(\mathbb{R}^{N}\) are introduced.
It is demonstrated that each of these classes of open sets exhibits compactness when measured by the Hausdorff distance.
This result is then applied to address the shape optimization problem associated with an elliptic equation, providing insights into the existence of optimal solutions.
In [4], the author introduces parametrized classes of admissible shape domains within \(\mathbb{R}^{N}\), where \(N \geq 2\), and establishes their compactness under various modes of convergence.
These domains are characterized as bounded \((\epsilon , \infty)\)-domains, and their boundaries may possess fractal elements with varying Hausdorff dimensions.
The author conclusively demonstrates the existence of optimal shapes within these classes, particularly in the context of maximum energy dissipation in linear acoustics.
In [1], a distinct class of open domains is unveiled, featuring a geometric property concerning the inward normal vector in relation to a compact and convex set \(C\).
Fundamentally, these domains are defined by the $C$-GNP property: the condition that the inward normal vector, wherever it exists, must intersect a fixed compact convex set $C$. Under this condition, the boundary may exhibit cusps characterized by rotational symmetry; for dimensions $N=2$ and $N=3$, such points are regular in the Wiener sense.

\textbf{Novelty and Contributions.}
While [2] focused on Lipschitz domains and [1] established the \(C\)-GNP for connected sets, the classes \(\mathcal{O}_{1,2}^{\delta}\) and \(\mathcal{O}_{P_{1,2}}^{\delta}\) presented here accommodate non-connected sets through a separation constraint.
Furthermore, the local classes \(\mathcal{O}(m)\) and \(\mathcal{O}_{NC}(m)\) extend the theory to boundaries defined by local geometric properties rather than a single global reference set.
These contributions provide a rigorous framework for proving existence in shape optimization problems where optimal shapes may be disjoint or possess complex, locally-defined geometries.

\textbf{Outline of the paper}\\
In this work, our focus is on extending the class introduced in [1] to create novel classes.
In Section 2, we recall the definitions and main results concerning the \(C\)-GNP property and Hausdorff convergence.
In Section 3, we study the regularity of the thickness function and its implications for the regularity of the boundary \(\partial \Omega\) under the \(C\)-GNP condition.
In Section 4, we construct two classes of non-connected open sets in \(\mathbb{R}^{N}\) where \(N\geq 2\) each expressed as the union of two disjoint open sets that satisfy the \(C\)-GNP property.
We denote these classes as \(\mathcal{O}_{1,2}^{\delta}\) (resp. \(\mathcal{O}_{P_{1,2}}^{\delta}\)). We establish compactness results for the Hausdorff topology within these classes and demonstrate the equivalence of convergence.
Finally, in Section 5, we introduce two new classes of connected open sets, denoted as \(\mathcal{O}(m)\) and \(\mathcal{O}_{NC}(m)\), based on a local geometric property of the normal, and establish similar compactness results. We conclude this paper by highlighting some physical problems where the classes of domains introduced ($\mathcal{O}_C$, $\mathcal{O}_{NC}$, and $\mathcal{O}_{1,2}^{\delta}$) are particularly relevant.

\section{Preliminaries and definitions}
Let \(D\) be an open ball of \(\mathbb{R}^{N}\), ($N\in\{2,3\}$).
All the open subsets with which we shall work will be included in \(D\).

\begin{definition}
Let \(C\) be a compact convex set in \(D\). The bounded domain \(\Omega\) satisfies the \(C\)-GNP if:
\begin{enumerate}
\item \(\Omega \supset \operatorname {int}(C)\)
\item \(\partial \Omega \backslash C\) is locally Lipschitz;
\item for any \(c\in \partial C\) there is an outward normal ray \(\Delta_c\) such that \(\Delta_c\cap \Omega\) is connected;
\item for every \(x\in \partial \Omega \backslash C\) the inward normal ray to \(\Omega\) (if it exists) meets \(C\).
\end{enumerate}
\end{definition}

Before giving and proving the main results of this paper, let us consider some 'exotic' examples of domains that satisfy the \(C\)-GNP.

\begin{example}
\textbf{Domain which satisfies the \(\partial C\)-GNP.} Let \(C\) be the compact unit ball of \(\mathbb{R}^2\).
Using the polar coordinates, one can verify that the two curves are parametrized as follows: for \(t\in [0,2\pi ]\)
\[x(t) = \cos (t) + t\sin (t), \quad y(t) = \sin (t) - t\cos (t)\]
and
\[x(t) = \cos (t) - (2\pi -t)\sin (t), \quad y(t) = \sin (t) + (2\pi -t)\cos (t)\]
give an open set which verifies the \(\partial C\)-GNP.
\end{example}
This domain presents a cusp at the geometric point $(1,0)$.
\begin{example}
\textbf{Domain which satisfies the \(C\)-GNP presenting countable and infinite set of cusps in its boundary.} Consider the compact convex \(C = [0,1]\times \{0\}\).
The open set
\[\Omega = \bigcup_{n = 1}^{\infty}B\left(\frac{3}{2^{n + 1}},\frac{1}{2^{n + 1}}\right)\]
satisfies the \(C\)-GNP and its boundary contains cusps at the points \((\frac{1}{2^n},0)\).
\end{example}

\begin{example}
\textbf{Domain which satisfies the \(C\)-GNP while its perimeter is equal to \(+\infty\).} Consider, in \(\mathbb{R}^2\), the convex \(C = [0,1]\times \{0\}\) and \(\Omega\) the union of triangles with base \([\frac{1}{n + 1},\frac{1}{n} ]\) and height \(a_{n} = \sqrt{\frac{1 - \frac{1}{n}}{2n(n + 1)}}\).
On one hand, the perimeter of \(\Omega\) is
\[P(\Omega) = \sum_{n = 1}^{\infty}\sqrt{a_n^2 + \frac{1}{4n^2(n + 1)^2}}\]
which tends to \(+\infty\).
On the other hand, the boundary of \(\Omega\) is the graph of a function \(g\) defined by
\[g(x) = 2na_{n}[(n + 1)x - 1], \quad \mathrm{if} \ \frac{1}{n + 1}\leq x\leq \frac{1}{2} (\frac{1}{n} +\frac{1}{n + 1})\]
and
\[g(x) = 2(n + 1)a_{n}[1 - nx], \quad \mathrm{if} \ \frac{1}{2} (\frac{1}{n} +\frac{1}{n + 1})\leq x\leq \frac{1}{n}.\]
One can check that
\[\forall x\in [0,1] \ \mathrm{s.t.} \ x\neq \frac{1}{2} (\frac{1}{n} +\frac{1}{n + 1}), \quad 0\leq x + g(x)g'(x)\leq 1\]
which means that \(\Omega\) satisfies the \(C\)-GNP.
\end{example}
\subsection{Minimizing the perimeter on $\mathcal{O}_C$}
We now specialize to the two-dimensional case and assume \(C\) is the segment \([- 1,1]\times \{0\}\). Furthermore, we consider domains \(\Omega\) symmetric with respect to the \(x\)-axis and whose boundaries consist of two graphs:
\[\partial \Omega \cap \{y > 0\} = \{(x,\phi_1(x)):x\in I\} ,\qquad \partial \Omega \cap \{y< 0\} = \{(x, - \phi_2(x)):x\in I\} ,\]
where \(\phi_{i} > 0\) on \(I\subset \mathbb{R}\).
For the upper part, the inward normal at \((x,\phi_{1}(x))\) (pointing downwards) is proportional to \((- \phi_{1}^{\prime}(x),1)\).
The half-line in this direction meets the \(x\)-axis at the point \((x - \phi_{1}(x)\phi_{1}^{\prime}(x),0)\).
Condition (4) requires this intersection to be within \(C\), i.e.,
\[-1\leq x - \phi_{1}(x)\phi_{1}^{\prime}(x)\leq 1\qquad \mathrm{for~all~}x\in I.\]
Consider the problem of minimizing the perimeter of \(\Omega\) (or its surface area, etc.) among domains that exactly satisfy the \(C\)-GNP.
Using the graph representation, the perimeter of the upper part is
\[P_{1}(\phi_{1}) = \int_{I}\sqrt{1 + \left(\phi_{1}^{\prime}(x)\right)^{2}} dx.\]
The constraint is precisely \(\psi_{1}(x)\in [- 1,1]\) for all \(x\).
This is a differential constraint of the form
\[-1\leq x - \phi_{1}(x)\phi_{1}^{\prime}(x)\leq 1.\]
Introduce the new variable \(u(x) = \phi_{1}(x)^{2}\geq 0\).
Then \(u^{\prime}(x) = 2\phi_{1}(x)\phi_{1}^{\prime}(x)\) and the constraint becomes
\[-1\leq x - \frac{u^{\prime}(x)}{2}\leq 1\iff u^{\prime}(x)\in \left[2(x - 1),2(x + 1)\right].\]
The perimeter functional transforms into
\[P_{1}(u) = \int_{I}\sqrt{1 + \frac{(u^{\prime}(x))^{2}}{4u(x)}} dx.\]
We are thus led to the following variational problem with inequality constraints on the derivative:
\[\mathrm{Minimize} P_{1}(u) = \int_{I}\sqrt{1 + \frac{(u^{\prime})^{2}}{4u}} dx\quad \mathrm{subject~to}\quad u(x)\geq 0, u^{\prime}(x)\in [2(x - 1),2(x + 1)] \quad (\forall x \in I)\]
with appropriate boundary conditions (e.g., \(u(- 1) = 0\), \(u(1) = 0\) if the domain touches \(C\) at the endpoints).

\subsubsection{Analysis of Extremal Solutions}
The problem is a non-standard isoperimetric problem with a pointwise bound on the derivative.
When the constraint is inactive, the Euler Lagrange equation for \(P_{1}(u)\) applies.
However, the presence of the bounds on \(u^{\prime}\) typically makes the constraint active on parts of the interval.
Indeed, the admissible set for \(u^{\prime}\) is a moving interval of width 4. If we seek a symmetric solution (corresponding to a symmetric domain), we can assume \(I = [- 1,1]\) and \(u(- 1) = u(1) = 0\)

A plausible candidate for the extremal shape (minimizing the perimeter) under the \(C\)-GNP constraint is the union of two disks of radius \(R\) centered at \((- 1,R)\) and \((1,R)\) (for the upper boundary).
In this case, the upper boundary is a circular arc.
For a circular arc of radius \(R\) centered at \((1,R)\), its graph function satisfies \((x - 1)^{2} + (\phi_{1}(x) - R)^{2} = R^{2}\), i.e., \(\phi_{1}(x) = \sqrt{R^{2} - (x - 1)^{2}} +R\).
A direct calculation gives
\[x - \phi_{1}(x)\phi_{1}^{\prime}(x) = x - \frac{\phi_{1}(x)(1 - x)}{\sqrt{R^{2} - (x - 1)^{2}}} = 1,\]
so the constraint is active at the upper bound.
Similarly, the left arc yields the value \(- 1\).
Thus, the two-disk construction saturates the constraint at the extremities, and the entire boundary satisfies \(\psi_{1}(x)\in [- 1,1]\).
This suggests that the two-disk shape is a natural candidate for the "extremal" non-convex domain in \(\mathcal{O}_{C}\) (with respect to perimeter minimization, possibly under a fixed area constraint).

\subsection{Limit of sequence of open sets satisfying \(C\)-GNP with volumes tending to \(\infty\)}
We shall prove that a sequence of open sets satisfying \(C\)-GNP with volumes tending to \(\infty\), converges to some ball in the following sense.
\begin{definition}
Let \(\Omega_{n}\) be a sequence of open connected subsets of \(\mathbb{R}^{N}\) with volumes tending to \(\infty\).
Let \(T_{n}\) be a dilatation of center \(O\) and scale factor \(\frac{1}{d(\Omega_{n})}\).
We say that \(\Omega_{n}\) converges to a ball \(B_{\infty}^{O}\) if \(T_{n}(\Omega_{n})\) converges uniformly to a unit ball \(B_{1}\) (i.e., \(\forall \epsilon \geq 0\), \(\exists N_{\epsilon} \in \mathbb{N}: \forall n \geq N_{\epsilon}\), \(B(O, 1 - \epsilon) \subset T_{n}(\Omega_{n}) \subset B_{1}\)).
\end{definition}

\begin{proposition}
Let \(B_{C}\) be the smallest ball of center \(O\) containing \(C\).
Let \(\Omega_{n}\) be a sequence of open connected subsets which satisfy the \(B_{C}\)-GNP.
If \(Vol(\Omega_{n})\) tends to \(\infty\) then \(\Omega_{n}\) converges to the ball \(B_{\infty}^{O}\) according to Definition 2.5.
\end{proposition}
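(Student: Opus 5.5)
Since \(B_C\) is the closed ball \(\overline{B(O,r_C)}\), the inward normal at any regular boundary point \(x \in \partial\Omega_n \setminus B_C\) must meet this ball. Equivalently, the angle between the inward normal \(\nu(x)\) and the direction \(O - x\) is at most \(\arcsin\bigl(r_C/|x-O|\bigr)\). My plan is to show that, after rescaling by \(d_n := d(\Omega_n)\), this angle condition becomes asymptotically radial, and that this forces \(\partial\Omega_n\) to lie in a thin spherical shell. I read \(d(\Omega_n)\) as the maximal distance from \(O\) to points of \(\overline{\Omega}_n\), the outer radius about \(O\); this makes the inclusion \(T_n(\Omega_n) \subset B_1\) automatic. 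Because \(Vol(\Omega_n) \le \omega_N d_n^N\) and the volumes tend to \(\infty\), we get \(d_n \to \infty\), so \(r_C/d_n \to 0\).

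\textbf{Step 1 (radial monotonicity of the distance function).} Set \(\rho(x) = |x-O|\). At a regular point \(x\) of \(\partial\Omega_n\) with \(\rho(x) = r > r_C\), the inward normal \(\nu\) satisfies
\[
\nu \cdot \frac{O-x}{r} \;\ge\; \sqrt{1 - r_C^2/r^2}.
\]
Hence the outward normal has positive radial component. Combined with the Lipschitz regularity of \(\partial\Omega_n \setminus C\) and condition 3 (each ray from \(B_C\) meets \(\Omega_n\) in a connected set), this should show that \(\Omega_n \setminus B_C\) is star-shaped with respect to \(O\). More precisely, \(\Omega_n\) is described by a radial function \(R_n(\theta)\) on the sphere \(S^{N-1}\), and this function is Lipschitz with
\[
|\nabla_\theta \log R_n| \;\le\; \frac{r_C/R_n}{\sqrt{1 - r_C^2/R_n^2}}.
\]
This is the analogue of the graph constraint \(-1 \le x - \phi\phi' \le 1\) computed in Subsection 2.1, now written in polar coordinates.

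\textbf{Step 2 (oscillation estimate).} Let \(M_n = \max R_n = d_n\) and \(m_n = \min R_n\). I will integrate the gradient bound along a geodesic of length at most \(\pi\) from a maximizing direction to any other direction \(\theta\). Where \(R_n \ge r_C\), the bound gives \(|(R_n)_\theta| \le C r_C\), so \(R_n\) can decrease by at most about \(\pi r_C\) along the geodesic, provided it stays large. A continuity/bootstrap argument then shows that \(R_n\) never drops near \(r_C\), and that
\[
m_n \;\ge\; d_n - \pi r_C\bigl(1 + o(1)\bigr).
\]
Dividing by \(d_n\) gives \(m_n/d_n \to 1\), which is exactly \(B(O, 1-\epsilon) \subset T_n(\Omega_n) \subset B_1\) for \(n\) large. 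Connectedness of \(\Omega_n\) together with \(\Omega_n \supset \operatorname{int}(B_C)\) guarantees that the radial description covers all directions.

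\textbf{Main obstacle.} The delicate part is Step 1: establishing rigorously that the boundary is a radial graph when the normal exists only almost everywhere and cusps are allowed. Condition 3 only provides one ray per point of \(\partial C\), and it is not obviously the radial ray. I would first try to prove that each ray from \(O\) exits \(\Omega_n\) exactly once. Suppose instead that a ray leaves \(\Omega_n\) at some point and later re-enters. Near the re-entry point, the Lipschitz boundary must then have inward normals whose radial component is \(\le 0\) on a set of positive measure, which contradicts the angle bound. For the degenerate directions, I would rely on the paper's earlier treatment of cusps in [1].
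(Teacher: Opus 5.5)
Your proposal is correct and follows essentially the paper's route: the paper rescales by the outer radius (your reading of $d(\Omega_n)$ is the one its auxiliary lemma needs, since that lemma requires $T_n(\Omega_n)\subset B_1$ with boundary touching $\partial B_1$), writes the set in polar form, derives the same bound $(G_n')^2\le \epsilon_n^2G_n^2/(G_n^2-\epsilon_n^2)$ with $\epsilon_n=r_C/d_n$, and runs the same finite-increment bootstrap from a maximizing angle, whereas you work unscaled and on $S^{N-1}$ rather than only in the plane. The star-shapedness you single out as the main obstacle is only asserted in the paper, and it follows faster than via your re-entry argument from the equivalence of the $C$-GNP with the $C$-SP: for $x\in\partial\Omega_n$ with $|x-O|>r_C$, each $y=x+t(x-O)$, $t>0$, satisfies $(y-x)\cdot(c-x)\le t|x-O|\,(r_C-|x-O|)<0$ for all $c\in B_C$, so $y\in CN_x$ and hence $y\notin\Omega_n$.
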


We begin by showing the following

\begin{lemma}
Let \(\epsilon_{n} > 0\) a sequence tending to 0. Denote by \(B_{\epsilon_{n}}\) the ball of center \(O\) and of radius \(\epsilon_{n}\).
Let \(\Omega_{n}\) be a sequence of open subsets of the unit ball \(B_{1}\) such that \(\partial \Omega_{n} \cap \partial B_{1} \neq \emptyset\).
If \(\Omega_{n}\) satisfies the \(B_{\epsilon_{n}}\)-GNP then \(\Omega_{n}\) converges uniformly to \(B_{1}\).
\end{lemma}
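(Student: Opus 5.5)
The plan is to show that for every $\epsilon>0$ and all $n$ large, $B(O,1-\epsilon)\subset\Omega_n\subset B_1$. The inclusion $\Omega_n\subset B_1$ is a hypothesis, so everything rests on the inner inclusion. The idea is that the $B_{\epsilon_n}$-GNP forces $\partial\Omega_n$ to be almost radial-orthogonal. Every inward normal at a boundary point $x$ passes within distance $\epsilon_n$ of $O$. Hence the angle between the inward normal $\nu(x)$ and $-x/|x|$ is at most $\arcsin(\epsilon_n/|x|)$. So away from a small neighbourhood of $O$, the boundary is nearly a sphere centred at $O$.

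First, I would use condition 3 together with the local Lipschitz regularity of $\partial\Omega_n\setminus B_{\epsilon_n}$ to write $\Omega_n$ in star-shaped form. I would show that $\Omega_n$ is star-shaped with respect to $O$ outside $B_{\epsilon_n}$ and parametrize $\partial\Omega_n\setminus B_{\epsilon_n}$ radially by $r=\rho_n(\theta)$, $\theta\in S^{N-1}$. This should follow from the $C$-GNP argument in [1], which gives exactly this radial description for domains with the GNP with respect to a ball. Along a ray, the normal makes an acute angle with the radial direction, so the ray can cross the boundary only once outside $B_{\epsilon_n}$.

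Second, I would convert the normal condition into a differential inequality. At differentiability points of $\rho_n$, the angle condition gives
\[|\nabla_\theta \log \rho_n(\theta)| \le \tan\bigl(\arcsin(\epsilon_n/\rho_n(\theta))\bigr)\le \frac{\epsilon_n}{\sqrt{\rho_n^2-\epsilon_n^2}}.\]
Since $\rho_n$ is Lipschitz, integrating along a great-circle arc of length at most $\pi$ bounds the oscillation. On the region where $\rho_n\ge \eta$, for a fixed small $\eta>0$, this yields $|\log\rho_n(\theta)-\log\rho_n(\theta')|\le \pi\epsilon_n/\sqrt{\eta^2-\epsilon_n^2}\to0$.

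Third, I would use the hypothesis $\partial\Omega_n\cap\partial B_1\neq\emptyset$, which gives a direction $\theta_0$ with $\rho_n(\theta_0)=1$. Combining this with the oscillation bound, I would conclude $\min\rho_n\ge 1-\epsilon$ for large $n$, hence $B(O,1-\epsilon)\subset\Omega_n$.

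The main obstacle is ruling out that $\rho_n$ dips down to $\eta$ or into $B_{\epsilon_n}$ somewhere, where the derivative bound degenerates. I would handle this with a continuity argument. Start at $\theta_0$ and follow any arc to an arbitrary $\theta$. While $\rho_n\ge 1/2$, the bound above gives $\rho_n\ge e^{-C\epsilon_n}>1/2$ for large $n$, so the set where $\rho_n\ge1/2$ is open and closed along the arc and hence is everything. A secondary delicate point is justifying the radial parametrization, in particular that $\Omega_n$ contains a neighbourhood of $B_{\epsilon_n}$ (condition 1) and that rays meet $\partial\Omega_n$ once. For this I would rely on the corresponding structural result from [1] for the $C$-GNP.
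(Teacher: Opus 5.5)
Your proposal is correct and follows essentially the same route as the paper. The paper also takes star-shapedness about $O$ for granted and writes $\Omega_n$ in polar form $\rho<G_n(\theta)$, in dimension two only. It derives the same bound $|G_n'|/G_n\le \epsilon_n/\sqrt{G_n^2-\epsilon_n^2}$ from the normal condition. It then runs a continuation argument from the point $\theta_n$ where $G_n=1$, using the threshold $1-4\epsilon_n$ and the mean value inequality, and concludes because $4>\pi$; where you use $\log\rho_n$, the threshold $1/2$ and great-circle arcs in general dimension, these are cosmetic differences.
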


\begin{proof}
\(\Omega_{n}\) satisfies the \(B_{\epsilon_{n}}\)-GNP then \(\Omega_{n}\) is a star domain for the center \(O\) of the ball.
Therefore, we can use polar coordinates in two dimensions and spherical coordinates in higher dimensions, but for the sake of simplification, we will provide the demonstration in the case of dimension two.
Let \(G_{n}\) be a \(2\pi\)-periodic function such that
\[\Omega_{n} = \{(\theta ,\rho)\in \mathbb{R}^{2}\mid 0\leq \theta \leq 2\pi ,0\leq \rho < G_{n}(\theta)\} .\]
Let \(X = (\theta , G_{n}(\theta))\) a point of \(\partial \Omega_{n}\), the inward normal vector is given by
\[\nu_{n}(X) = \left(\frac{G_{n}^{\prime}(\theta)}{\sqrt{G_{n}^{2}(\theta) + (G_{n}^{\prime})^{2}(\theta)}}, -\frac{1}{\sqrt{G_{n}^{2}(\theta) + (G_{n}^{\prime})^{2}(\boldsymbol{\theta})}}\right)\]
then \(B_{\epsilon_{n}}\)-GNP implies
\[\frac{(G_{n}(\theta)G_{n}^{\prime}(\theta))^{2}}{G_{n}^{2}(\theta) + (G_{n}^{\prime})^{2}(\theta)} = |d(D(X,\nu_{n}(X)),O)|^{2}\leq \epsilon_{n}^{2}.\]
so
\[\left(G_{n}^{2}(\theta)\right)^{2}\leq \frac{\epsilon_{n}^{2}G_{n}^{2}(\theta)}{G_{n}^{2}(\theta) - \epsilon_{n}^{2}}.\]
Moreover,
\[\partial \Omega_{n}\cap \partial B_{1}\neq \emptyset\]
then there exists \(\theta_{n} \in [0, 2\pi ]\) such that
\[\max_{[0,2\pi ]}G = G(\theta_{n}) = 1.\]
Let's show that
\[\forall \theta \in [0,2\pi ],G_{n}(\theta) > 1 - 4\epsilon_{n}.\]

Suppose that the previous inequality is not true, and we define \(\alpha_{n}\) as the smallest strictly positive real number such that for a fixed \(n\),
\[\forall \theta \in [\theta_{n} - \alpha_{n},\theta_{n} + \alpha_{n}],G_{n}(\theta)\geq 1 - 4\varepsilon_{n},\]
in particular,
\[G_{n}(\theta_{n} + \alpha_{n}) = 1 - 4\varepsilon_{n} \quad \mathrm{or} \quad G_{n}(\theta_{n} - \alpha_{n}) = 1 - 4\varepsilon_{n}.\]
we get
\[\forall \theta \in [\theta_{n} - \alpha_{n},\theta_{n} + \alpha_{n}],(G_{n}^{\prime}(\theta))^{2}\leq \frac{\epsilon_{n}^{2}}{(1 - 4\epsilon_{n})^{2} - \epsilon_{n}^{2}}\]
so
\[\forall \theta \in [\theta_{n} - \alpha_{n},\theta_{n} + \alpha_{n}],(G_{n}^{\prime}(\theta))\leq \frac{\epsilon_{n}}{\sqrt{(1 - 4\epsilon_{n})^{2} - \epsilon_{n}^{2}}}\]
One deduces using the inequality of finite increments
\[4\epsilon_{n} = |G_{n}(\theta_{n} + \alpha_{n}) - G_{n}(\theta_{n})|\leq \alpha_{n}\frac{\epsilon_{n}}{\sqrt{(1 - 4\epsilon_{n})^{2} - \epsilon_{n}^{2}}}\]
and there exist \(n_0\in \mathbb{N}\) for all \(n\geq n_0\)
\[\alpha_{n}\geq 4\sqrt{(1 - 4\epsilon_{n})^{2} - \epsilon_{n}^{2}}\geq \pi ,\]
this provides a contradiction.
Therefore, we deduce that
\[\forall n\geq n_0,\forall \theta \in [0,2\pi ],G_n(\theta)\geq 1 - 4\epsilon_n\]
and the uniform convergence of \(G_{n}\) to 1 when \(n\to \infty\).
\end{proof}

\begin{proof}[Proof of Proposition 2.5]
Let \(n\in \mathbb{N}\) be fixed.
Let \(T_{n}\) be the homothety with center \(O\) and ratio \(\frac{1}{d(O,\partial \Omega_n)}\).
By 2.6, \(T_{n}(\Omega_{n})\) satisfies the \(T_{n}(B_{C})\)-GNP, where \(T_{n}(B_{C})\) is the ball with radius \(\frac{R_{C}}{d(O,\partial \Omega_{n})}\).
Moreover, \(\partial T_{n}(\Omega_{n})\) touches the boundary of the unit ball at at least one point.
As \(d(O,\partial \Omega_{n})\) necessarily tends to infinity, we can apply the preceding lemma to conclude.
\end{proof}

\begin{lemma}
There exists a constant \(C_N\), depending only on the dimension \(N\), such that for any open set \(\Omega\) possessing the \(C\)-GNP and circumscribed to the sphere \(S(O,R)\), we have \(Vol(\Omega)\geq c_NR^N\).
\end{lemma}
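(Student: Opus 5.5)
The plan is to read the statement in the setting of Proposition 2.5, since only there does it make sense. Here $C=B_C$ is the ball of center $O$ and radius $r$, and ``circumscribed to the sphere $S(O,R)$'' means $\Omega\subset \overline{B(O,R)}$ with $\partial\Omega\cap S(O,R)\neq\emptyset$. Some such reading is forced. For a general convex $C$, say a short segment, a thin tube around $C$ satisfies the $C$-GNP and has arbitrarily small volume, so no bound of the form $c_NR^N$ can hold. I will therefore prove: if $\Omega$ satisfies the $B(O,r)$-GNP and $R=\max_{\partial\Omega}|x|$, then $Vol(\Omega)\geq c_NR^N$. The argument splits according to the size of $r$ relative to $R$.

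\textbf{Case $r\geq R/4$.} Condition 1 of the definition gives $\Omega\supset \operatorname{int}B(O,r)\supset B(O,R/4)$. Hence $Vol(\Omega)\geq \omega_N 4^{-N}R^N$, where $\omega_N$ is the volume of the unit ball.

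\textbf{Case $r<R/4$.} As in the proof of Lemma 2.6, the $B(O,r)$-GNP makes $\Omega$ star-shaped with respect to $O$. So $\Omega=\{\rho<G(\sigma)\}$ in polar (resp. spherical) coordinates, with $G$ Lipschitz on $S^{N-1}$ (Lipschitz away from $C$ by condition 2, and $G>r$). The computation of Lemma 2.6, with $\epsilon_n$ replaced by $r$, gives at a.e.\ direction
\[
\frac{G^2|\nabla_\sigma G|^2}{G^2+|\nabla_\sigma G|^2}\leq r^2,
\qquad\text{hence}\qquad
|\nabla_\sigma G|\leq \frac{rG}{\sqrt{G^2-r^2}} .
\]
In three dimensions $\nabla_\sigma$ is the tangential gradient on $S^2$, and the same identity holds: it is the squared distance from $O$ to the normal line. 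Now let $\sigma_0$ be a direction with $G(\sigma_0)=R$. On the set where $G\geq R/2$ we have $G^2-r^2\geq R^2/4-R^2/16$, so $|\nabla_\sigma G|\leq c\,r\leq cR/4$ with an absolute constant $c$.

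\textbf{The continuity step.} This is the main obstacle, because the bound on $\nabla_\sigma G$ is only available where $G\geq R/2$; I follow the proof of Lemma 2.6. Let $\alpha$ be the largest geodesic radius such that $G\geq R/2$ on the cap $\{\sigma:\ d_{S^{N-1}}(\sigma,\sigma_0)\leq \alpha\}$. At a point of the boundary circle of this cap, $G=R/2$. The mean value inequality along a geodesic from $\sigma_0$ then gives
\[
R/2\leq \alpha\, cR/4,
\]
so $\alpha\geq \alpha_N:=\min(2/c,\pi)$, a constant independent of $r$, $R$ and $\Omega$. Thus $\Omega$ contains the conical sector $\{\rho\sigma:\ 0\leq\rho<R/2,\ d(\sigma,\sigma_0)<\alpha_N\}$. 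Its volume is $(R/2)^N N^{-1}\,\mathcal{H}^{N-1}(\text{cap of radius }\alpha_N)=c'_NR^N$. The care needed here is that $G$ is only Lipschitz, so the mean value argument must be run along a.e.\ geodesic, or via a Lipschitz chain argument on the sphere.

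Taking $c_N=\min(\omega_N4^{-N},c'_N)$ covers both cases and concludes the proof.
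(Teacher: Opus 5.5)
Your proof is correct for the reading you adopt, and your thin-tube example rightly shows that the statement needs some restriction. Both the restriction and the route, however, differ from the paper's.

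The paper keeps $C$ an arbitrary compact convex set and works in the regime where the lemma is actually used, namely the proposition producing $R_0$, with $C$ fixed and $R\to\infty$. It assumes this regime tacitly: both ``$H_N^+\cap C=\emptyset$'' and the $C$-independence of the angle $\alpha$ silently require $C$ to be small compared with $R$. Its argument runs as follows:
\begin{enumerate}
\item Take the contact point $x\in\partial\Omega\cap S(O,R)$.
\item Use the normal-free characterization (the $C$-SP) to place in $\Omega$ every $\eta$ with $x\in CN_\eta$.
\item Use the remark characterizing $\operatorname{int}(CN_x)$ to show that the truncated cone $B(x,\frac{x_N}{2})\cap C(x,\alpha)$ consists of such points. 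This cone has its vertex at $x$ and its axis pointing toward $O$.
\end{enumerate}
You instead place a sector with vertex at $O$ inside $\Omega$. You do this by rerunning the radial-function inequality and the continuity argument of the $B_{\epsilon_n}$ lemma, with $\epsilon_n$ replaced by $r$.

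What the paper's route buys: its inclusion needs no parametrization of $\partial\Omega$, no differentiation and no star-shapedness. It also applies to any convex $C$ lying well inside $B(O,R/2)$; for instance, $C\subset B(O,R/4)$ allows $\cos\alpha=3/4$.

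What your route buys: it gives explicit constants, and through your Case 1 it also covers $r$ comparable to $R$, which the cone argument as written excludes. In exchange it rests on two things: star-shapedness about $O$ with a Lipschitz radial function, which is asserted but not proved in the $B_{\epsilon_n}$ lemma, and an a.e.\ mean-value argument. This is comparable to the paper's proof resting on the inclusion $E_x\subset\Omega$, which it simply attributes to the $C$-SP.

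One correction to your framing: ``only there does it make sense'' is too strong, since the lemma is used with a general $C$ and large $R$. In that setting your reduction does not apply as written, because the $C$-GNP does not imply the $B_C$-GNP ($\Omega$ need not contain $\operatorname{int}B_C$). Your Case 2 estimate, however, uses only condition 4 together with the radial parametrization near $\sigma_0$. So it transfers to $C\subset B(O,r)$ with $r<R/4$ once you justify that parametrization directly from condition 4.
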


\begin{proof}
Let \(x\in \partial \Omega \cap S(O,R)\). Define \(E_{x} = \{y\in \mathbb{R}^{N}|x\in CN_{y}\}\).
According to the \(C\)-SP, the set \(E_{x}\) is in \(\Omega\), so \(Vol(\Omega)\geq Vol(E_{x})\).
Introduce \(H_{N}\) as the hyperplane \(\{z_{N} = \frac{x_{N}}{2}\}\), and \(H_{N}^{+}\) as the closed half-space \(\{z_{N}\geq \frac{x_{N}}{2}\}\).
As the closed intersection \(H_{N}^{+}\cap C\) is empty, there exists \(\alpha >0\) such that for any frame \(\mathcal{R}(x,e_{1}^{\prime},\dots,e_{N}^{\prime})\) where the angle \((e_{N}^{\prime},e_{N})< \alpha\), the convex set \(C\) is within the half-space \(\{x^{\prime}N< \frac{x_{N}}{2}\}\).
Consider \(\eta \in B(x,\frac{x_{N}}{2})\cap C(x,\alpha)\); by construction, \(C\) lies within the open half-space bounded by the hyperplane orthogonal to \(\overline{\eta x}\) passing through \(\eta\) and not containing \(x\).
Therefore, according to 2.2, \(x\in CN\eta\), meaning
\[B(x,\frac{x_N}{2})\cap C(x,\alpha)\subset E_x,\]
which concludes the result.
\end{proof}

Let \(F\) be a function of \(D\times \mathbb{R}\times \mathbb{R}^{N}\), where \(N = 2\) or 3, with values in \(\mathbb{R}\), continuous in \((r,p)\) and satisfying the following:
\[\forall (x,r,p)\in D\times \mathbb{R}\times \mathbb{R}^{N},|F(x,r,p)|\leq c\left(a(x) + r^{2} + |p|^{2}\right)\]
where \(c\) is a constant and \(a(x)\) is a function of \(L^{1}(D)\).
We then consider the functional of the domain
\[J(\omega) = \int_{C}F(x,u_{\omega}(x),\nabla u_{\omega}(x))dx\]
where \(u_{\omega}\) is the solution of the Dirichlet problem
\[P(\omega ,f)\left\{ \begin{array}{ll} - \triangle u_{\omega} = f & \mathrm{in}\ \omega ,\\ u_{\omega} = 0 & \mathrm{on}\ \partial \omega , \end{array} \right.\]
and \(f\in L^{2}(D)\).

\begin{proposition}
There exists \(R_{0} > 0\) such that for \(R > R_{0}\), the solution to the minimization problem of the functional \(J\), for sets contained within the ball \(B_{R}\) (centered at \(O\) with radius \(R\)) and possessing the \(C\)-GNP, does not touch the boundary of \(B_{R}\). The purpose of this proposition is to demonstrate that \(J(\Omega)\) becomes large when \(\Omega\) touches the boundary of the ball \(B_{R}\), leading to a contradiction with the minimality of \(\Omega\). More precisely, we will show that as \(R\) tends to infinity, \(J(\Omega)\) behaves like \(R^{N}\) for a set \(\Omega\) touching the boundary of the ball \(B_{R}\).
\end{proposition}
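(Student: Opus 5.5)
The strategy is a contradiction argument: if the minimizer touches $\partial B_R$, Lemma 2.7 forces it to be large, and that size should make $J$ large. I would first fix a reference competitor, namely $\omega_0 = \operatorname{int}(C_\varepsilon)$, a slight convex enlargement of $C$. It trivially satisfies the $C$-GNP, since inward normals of a convex body around $C$ hit $C$. It is contained in $B_R$ for every $R$ larger than the diameter of $C$. Hence any minimizer $\Omega_R$ of $J$ over the admissible class in $B_R$ satisfies $J(\Omega_R)\le J(\omega_0)=:M$, and $M$ is independent of $R$. Existence of the minimizer is taken as given, via the Hausdorff compactness results for $\mathcal{O}_C$ recalled in Section 2.

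Next, I would suppose $\Omega_R$ touches $\partial B_R$. Then $\Omega_R$ is circumscribed to $S(O,R)$ in the sense of Lemma 2.7, so $Vol(\Omega_R)\ge c_N R^N$. The main work is converting this volume bound into a lower bound on $J(\Omega_R)$ that tends to infinity with $R$. The growth bound $|F|\le c(a+r^2+|p|^2)$ only bounds $J$ from above, so one needs a coercivity-type lower bound. I would assume, as the paper's heuristic suggests, that $F(x,r,p)\ge c'(|p|^2-a(x))$ or a similar structure. Then $J(\Omega)$ is comparable to the Dirichlet energy of $u_\Omega$, and one shows this energy grows like $R^N$. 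To do this, I would use the star-shapedness given by the GNP (as in the proof of Lemma 2.6) to show that $\Omega_R$ contains the cone-ball region $E_x = B(x,x_N/2)\cap C(x,\alpha)$ constructed in Lemma 2.7. Taking $f$ extended by a positive constant on $D$, or at least $f\ge f_0>0$, the torsion-like solution on a domain containing a ball of radius about $R$ has energy of order $R^{N+2}\ge R^N$, by comparison with the explicit solution $(r_0^2-|x-x_0|^2)/(2N)\cdot f_0$ and the maximum principle.

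Finally, I would choose $R_0$ so that $c''R_0^N > M$. For $R>R_0$ this contradicts $J(\Omega_R)\le M$, so the minimizer does not touch $\partial B_R$.

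The main obstacle is the middle step. Since the integral defining $J$ is over $C$, which is fixed, the volume of $\Omega$ enters only through the behaviour of $u_\Omega$ on $C$. I would first attempt the monotonicity route: for $f\ge0$, $u_\Omega$ increases with $\Omega$. Because $\Omega_R$ contains a large region of size about $R$ that is star-shaped with respect to $C$, the values of $u_{\Omega_R}$ on $C$ are at least those for a ball of radius of order $R$ containing $C$, which are of order $R^2$. Then $\int_C |\nabla u|^2$ or $\int_C u^2$ grows, giving the required divergence under an explicit coercivity hypothesis on $F$. That hypothesis must be added; without it the statement cannot hold for general $F$.
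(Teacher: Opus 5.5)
Your frame is sound. A fixed admissible competitor such as $\operatorname{int}(C_\varepsilon)$ gives $J(\Omega_R)\le M$ uniformly in $R$; the paper leaves this competitor implicit. You are also right that the upper growth bound on $F$ gives no lower bound on $J$.

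\textbf{The gap.} The gap is exactly the step you flag, and your route does not close it. Since $J$ only integrates over the fixed set $C$, your hypothesis $F\ge c'(|p|^2-a)$ would require $\int_C|\nabla u_{\Omega_R}|^2\to\infty$, and this is false in general. The admissible balls $B(z,R-|z|)$ with $z\in C$ touch $\partial B_R$, yet $\nabla u$ stays bounded on $C$ as $R\to\infty$; for $f\equiv f_0$ it equals $-f_0(x-z)/N$. For $N=3$ and a fixed compactly supported $f\ge0$ (the paper's formulas take $f$ supported in $C$), one even has $\int_C|\nabla u_\Omega|^2\le\int f u_\Omega\le\int fU_R=O(1)$ for every $\Omega\subset B_R$. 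The $R^{N+2}$ torsion energy you compute is spread over the whole domain, which $J$ does not see.

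Your fallback through $\int_C u^2$ needs three things you do not have: coercivity in $r$, a bound $f\ge f_0>0$ on a region of size $R$ (not a fixed $L^2$ datum), and a ball of radius of order $R$ containing $C$ inside $\Omega_R$. The volume lemma only gives $B(x,x_N/2)\cap C(x,\alpha)\subset E_x\subset\Omega_R$ near the contact point $x$, far from $C$. Monotonicity then yields $u_{\Omega_R}\ge u_{E_x}$ on that region and says nothing on $C$, and the needed ball around $C$ is never established.

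\textbf{What the paper does instead.} The paper gets the $R^N$ growth from a different mechanism. Its proof works with a functional of energy-plus-volume type, $J(\Omega)\ge-\frac12\int_C fu_\Omega+k^2\mathrm{Vol}(\Omega)$, so the volume lemma enters $J$ directly. The maximum principle is used in the direction opposite to yours: $u_\Omega\le U_R$ for $\Omega\subset B_R$, which only serves to bound the negative energy term below by $-\frac12\int_C fU_R$. The explicit Green function of $B_R$ then shows that $\int_C fU_R$ grows at most like $R$ when $N=2$ (in fact like $\log R$) and stays bounded when $N\ge3$. Hence $J(\Omega)\ge k^2c_NR^N-O(R)\to\infty$.

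This structure is not implied by the stated definition $J=\int_C F(x,u,\nabla u)$, so your remark that a hypothesis must be added is correct. But the hypothesis that makes the argument work is a term bounded below by a multiple of $\mathrm{Vol}(\Omega)$, together with control of the energy part by $U_R$. Coercivity of $F$ in $p$ or $r$ is not enough. With that assumption, your competitor argument finishes immediately.
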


\begin{proof}
Since \(\Omega \subseteq B_{R}\), by the maximum principle, we have
\[J(\Omega)\geq -\frac{1}{2}\int_{C}fU_{R} + k^{2}\mathrm{Vol}(\Omega),\]
where \(U_{R}\) is the solution to the Dirichlet problem \((P)\) on \(B_{R}\).
Let \(E_{N}\) be the fundamental solution of \(- \Delta\) in \(\mathbb{R}^{N}\):
\[\begin{cases} \mathcal{E}_N(x) = -\frac{1}{2\pi}\ln |x| & \mathrm{for}\ N = 2,\\ \mathcal{E}_N(x) = \frac{1}{k_N}\frac{1}{|x|^{N - 2}} & \mathrm{for}\ N\geq 3, \end{cases}\]
where \(k_{n}\) is the negative constant \(\frac{(2(2 - N)\pi^{N / 2})}{\Gamma(N / 2)}\).
The solution \(U_{R}\) of problem \((P)\) on the ball \(B_{R}\) is given by
\[U_{R}(x) = \int_{C}\left[E_{N}(x - y) - E_{N}\left(\frac{|y|x}{R} -\frac{Ry}{|y|}\right)\right]f(y)dy.\]
Now, if \(N = 2\), then
\[U_{R}(x) = \int_{C}\ln |x - y|f(y)dy - \int_{C}\ln \left|\frac{|y|x}{R} -\frac{Ry}{|y|}\right|f(y)dy.\]
In the second term of the above equality, denoted as \(F\), for sufficiently large \(R\), the second term is positive, and the first term is bounded from above by \(c_{1} = 2R\int_{C}f(y)dy\), which is a strictly positive constant (\(f > 0\)).
Therefore,
\[\int_{C}fU_{R}\leq 2c_{1}R - F.\]
Consequently,
\begin{equation} \label{star}
J(\Omega)\geq F - 2c_{1}R + k^{2}c_{N}R^{N}.
\end{equation}

Now, consider the case \(N \geq 3\).
\(U_{R}\) is given by
\[U_{R}(x) = \frac{1}{k_{N}}\int_{C}\frac{f(y)}{|x - y|^{N - 2}} dy - \frac{1}{k_{N}}\int_{C}\frac{f(y)}{|\frac{|y|x|}{R} - \frac{Ry}{|y|}|^{N - 2}} dy.\]
In the second term of the above equality, the first term denoted as \(G\), is negative (since \(k_{N}< 0\)), and the second term is bounded from above by \(- \frac{c_{1}}{k_{N}\left(R - \frac{c_{2}}{R}\right)^{N - 2}}\) (\(c_{1}\) as above and \(\delta = \max_{y\in C}|y|\)).
It follows that
\[\int_{C}fU_{R}\leq G - \frac{c_{1}}{k_{N}\left(R - \frac{c_{2}}{R}\right)^{N - 2}}.\]
Hence,
\begin{equation} \label{starstar}
J(\Omega)\geq \frac{c_1}{k_N\left(R - \frac{c_2}{R}\right)^{N - 2}} -G + k^2 c_NR^N.
\end{equation}
In conclusion, from inequalities (\ref{star}) and (\ref{starstar}), it is deduced that \(J(\Omega)\) tends to infinity as \(R\) tends to infinity, and consequently, \(\Omega\) is not a minimum of \(J\).
\end{proof}

In this definition, we provide a characterization of the \(C\)-GNP without using the normal.
\begin{definition}
Let \(C\) be a convex set in \(D\) and \(\Omega\) an open set in \(D\) such that \(int(C)\subset \Omega\). We will say that \(\Omega\) satisfies the \(C\)-SP if \(\Omega\) satisfies conditions (2) and (3) of the 2.3 and for every \(x\in \partial \Omega \backslash C\), the intersection of the normal cone \(CN_{x}\) with \(\Omega\) is empty (\(CN_{x}\cap \Omega = \emptyset\)). \(CN_x\) is defined in Definition 2.23.
\end{definition}

\begin{proposition}
Let \(\Omega\) be an open set such that \(int(C)\subset \Omega \subset D\), then \(\Omega\) has the \(C\)-GNP if and only if it satisfies the \(C\)-SP.
\end{proposition}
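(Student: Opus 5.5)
The plan is to prove both implications by contradiction, working with the description of the normal cone suggested by the proof of Lemma 2.7. There it is used that \(x\in CN_\eta\) as soon as \(C\) lies in the open half-space bounded by the hyperplane through \(\eta\) orthogonal to \(\overline{\eta x}\) and not containing \(x\). Accordingly, I take \(CN_x\) to be the set of points \(y\ne x\) such that \((y-x)\cdot (c-x)\le 0\) for every \(c\in C\). In words, \(CN_x\) collects the directions leaving \(x\) that make a non-acute angle with every direction from \(x\) to \(C\). This is my reading, not a definition given in the paper; if Definition 2.23 differs, the two directions below must be adapted to it. Throughout I use the local Lipschitz regularity of \(\partial\Omega\setminus C\) (condition (2), common to both properties). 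It gives a unit inward normal \(\nu(x)\) at \(\mathcal H^{N-1}\)-a.e. boundary point, and Clarke generalized normals elsewhere.

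\emph{(\(C\)-GNP \(\Rightarrow\) \(C\)-SP).} Suppose \(x\in\partial\Omega\setminus C\) and \(y\in CN_x\cap\Omega\). Since \(\Omega\) is open, I replace \(y\) by a small ball \(B(y,r)\subset\Omega\). I then look at the points of \(\partial\Omega\) inside the closed cone \(\overline{CN_x}\). Among them I choose \(p\) as a point of \(\partial\Omega\) closest to the segment \([x,y]\), or more precisely a farthest boundary point from \(C\) along the cone.

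The geometric goal is that at such an extremal point the inward normal of \(\Omega\) at \(p\), when it exists, points into the half-space \(\{z:(z-x)\cdot(c-x)\le 0\}\), away from \(C\). The normal ray would then miss \(C\), contradicting condition (4). If \(p\) is a non-differentiability point, I will perturb \(y\) slightly. By Rademacher's theorem a.e. point of the Lipschitz graph near \(p\) has a normal, and the same extremality holds up to an arbitrarily small angular error. The strict half-space separation coming from \(B(y,r)\) absorbs that error.

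\emph{(\(C\)-SP \(\Rightarrow\) \(C\)-GNP).} Suppose the inward normal ray \(x+t\nu\), \(t>0\), misses the compact convex set \(C\). By strict separation there is a hyperplane \(H\) with the ray on one side and \(C\) strictly on the other. Since \(x+t\nu\in\Omega\) for small \(t\) (the normal is inward and \(\partial\Omega\) is Lipschitz near \(x\)), I want to produce a point of \(\Omega\) in \(CN_x\), which contradicts \(CN_x\cap\Omega=\emptyset\).

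When \(\nu\cdot(c-x)\le 0\) for all \(c\in C\), the point \(x+t\nu\) itself lies in \(CN_x\) and we are done. Otherwise some points of \(C\) lie "ahead" of \(x\) but off the ray. For this case I intend to move along the ray to a point \(q=x+t_0\nu\) near the separating hyperplane. I then use condition (3), the connectedness of \(\Delta_c\cap\Omega\) for the outward rays from \(\partial C\), together with \(\operatorname{int}(C)\subset\Omega\). The aim is to show that \(\Omega\) contains a boundary point \(x'\) for which \(q\) (or a nearby point of \(\Omega\)) lies in \(CN_{x'}\).

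I expect this last step to be the main obstacle, because missing \(C\) is weaker than forming an obtuse angle with all of \(C\). The first approach I would try is to work in the two-dimensional plane spanned by \(\nu\) and the direction to the nearest point of \(C\), and to use the tangent lines from \(x\) to \(C\) in that plane. If this fails, I would fall back on checking that the two properties agree on each locally Lipschitz boundary piece where the normal is defined, and then pass to the whole boundary by density.
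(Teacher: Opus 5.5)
Your reading of $CN_x$ matches Definition 2.23. However, neither direction is actually carried out, and the converse stops exactly at the key step.

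Write $K_z=\{t(c-z):\ c\in C,\ t\ge 0\}$. For $z\notin C$ this is a closed convex cone, because $C$ is compact. The inward normal ray at $z$ meets $C$ if and only if $\nu(z)\in K_z$. Moreover $CN_z=(z+K_z^{\circ})\cap D$, where $K_z^{\circ}$ is the polar cone.

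\emph{C-SP $\Rightarrow$ C-GNP.} You separate the ray from $C$ and then try to travel along the ray and invoke condition (3). The missing idea is to separate $\nu$ from the cone $K_x$ instead. If $\nu\notin K_x$, there is $w$ with $w\cdot(c-x)\le 0$ for all $c\in C$ and $w\cdot\nu>0$. The first property puts $x+sw$ in $CN_x$. The second, together with differentiability at $x$ of the local Lipschitz graph, gives $x+sw\in\Omega$ for small $s>0$. So $CN_x\cap\Omega\neq\emptyset$.

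You never need $\nu$ itself to lie in $CN_x-x$: every direction of the open half-space $\{w\cdot\nu>0\}$ enters $\Omega$. This settles your ``otherwise'' case in two lines, and condition (3) plays no role.

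\emph{C-GNP $\Rightarrow$ C-SP.} Here the extremal point is not pinned down, and the property you aim for at it is the wrong one.
\begin{itemize}
\item ``Closest to $[x,y]$'' selects nothing, since $x$ itself lies on $[x,y]$.
\item ``Farthest from $C$ in the cone'' fails too. Take, for instance, an interior maximizer of $z\cdot e$ with $e\in\operatorname{int}K_x^\circ$. Its inward normal is $-e$, whose ray can perfectly well hit $C$, so no contradiction arises.
\item The target $\nu(p)\in K_x^\circ$ is not what such a construction yields, and it is not needed.
\end{itemize}

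What works is the following. Take $y\in\Omega\cap\operatorname{int}CN_x$; this is possible since $\Omega$ is open and $K_x$ is pointed. Let $p=x+s^*(y-x)$ be the point of $\partial\Omega\cap[x,y]$ nearest to $y$. Then $(p,y]\subset\Omega$, and $p\notin C$ because $CN_x\cap C=\emptyset$. For every $c\in C$,
\[
(y-x)\cdot(c-p)=(y-x)\cdot(c-x)-s^*|y-x|^2<0 ,
\]
so $(y-x)\cdot k<0$ for all $k\in K_p\setminus\{0\}$; the polar-cone property transfers from $x$ to $p$. Since $y-x$ enters $\Omega$ at $p$, any inward normal there satisfies $\nu(p)\cdot(y-x)\ge 0$. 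Hence $\nu(p)\notin K_p$, and condition (4) fails at $p$.

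If $\partial\Omega$ is not differentiable at $p$, your Rademacher perturbation is not justified. Nothing makes the perturbed exit points differentiability points, and ``extremality up to a small angular error'' does not follow from Rademacher's theorem. Argue instead as follows.
\begin{itemize}
\item Limits of normals at nearby differentiability points still have rays meeting $C$, since $C$ is compact and $p\notin C$.
\item $K_p$ is convex, so the whole Clarke normal cone at $p$ lies in $K_p$.
\item A direction entering $\Omega$ at $p$ has nonnegative product with some Clarke inward normal. This follows from Clarke's generalized directional derivative of the local graph function.
\end{itemize}
With these facts the same contradiction goes through at $p$.
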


For the proof, see [1].

\begin{definition}[Hausdorff convergence]
Let \(K_{1}\) and \(K_{2}\) be non-empty compact sets included in \(D\). We set
\[\forall x\in D, \ d(x,K_1) = \inf_{y\in K_1}d(x,y),\]
\[\rho (K_1,K_2) = \sup_{x\in K_1}d(x,K_2),\]
\[d^H (K_1,K_2) = \max (\rho (K_1,K_2),\rho (K_2,K_1)).\]
Let \((\Omega_{n})_{n\geq 1}\) and \(\Omega\) be a sequence of open subsets of \(D\).
We say that the sequence \(\Omega_{n}\) converges in the sense of Hausdorff to \(\Omega\) if
\[\Omega_{n}\xrightarrow{H}\Omega\quad\mathrm{if}\quad d_{H}(\Omega_{n},\Omega)\longrightarrow0\quad\mathrm{when}\ n\longrightarrow\infty.\]
Where \(d_{H}(\Omega_{n},\Omega) = d^{H}(D\backslash \Omega_{n},D\backslash \Omega)\), and \(d^{H}\) is the Hausdorff distance for compact subsets of \(\mathbb{R}^{N}\).
\end{definition}

We shall recall some lemmas concerning this convergence.

\begin{lemma}
The inclusion of compact (resp. open) sets is stable for Hausdorff topology.
\end{lemma}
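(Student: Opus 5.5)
We read the statement in its two standard forms. \emph{Compact sets:} if $K_n\subset L_n$ are compact, $K_n\to K$ and $L_n\to L$ in $d^H$, then $K\subset L$. \emph{Open sets:} if $\Omega_n\subset\omega_n$ are open subsets of $D$ with $\Omega_n\xrightarrow{H}\Omega$ and $\omega_n\xrightarrow{H}\omega$ (in the sense of Definition 2.11, i.e.\ via complements in $D$), then $\Omega\subset\omega$. The plan is to prove the compact case directly from the definition of $d^H$, and then deduce the open case by passing to complements.

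For the compact case, fix $x\in K$. Since $\rho(K,K_n)\le d^H(K_n,K)\to 0$, we can choose $x_n\in K_n$ with $|x_n-x|\le d^H(K_n,K)$, so $x_n\to x$. Because $K_n\subset L_n$, we have $x_n\in L_n$. Hence
\[
d(x,L)\le |x-x_n|+d(x_n,L)\le |x-x_n|+\rho(L_n,L)\le |x-x_n|+d^H(L_n,L)\longrightarrow 0 .
\]
So $d(x,L)=0$, and since $L$ is closed, $x\in L$. This gives $K\subset L$.

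For the open case, set $K_n=\overline D\setminus\omega_n$ and $L_n=\overline D\setminus\Omega_n$. These are compact subsets of $\overline D$; to stay inside a fixed compact set we work in $\overline D$, which is consistent with the paper's convention $D\setminus\Omega$. The inclusion $\Omega_n\subset\omega_n$ gives $K_n\subset L_n$. By Definition 2.11, $K_n\to \overline D\setminus\omega$ and $L_n\to\overline D\setminus\Omega$ in $d^H$. The compact case then yields $\overline D\setminus\omega\subset \overline D\setminus\Omega$, and taking complements in $\overline D$ gives $\Omega\subset\omega$.

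The only delicate point is bookkeeping. The complements must be taken inside a fixed compact set, namely $\overline D$, so that they are nonempty and compact and $d^H$ is defined. We also need the Hausdorff limit to be closed, which holds because limits in $d^H$ are taken within the space of compact sets. Beyond this, the argument is elementary.
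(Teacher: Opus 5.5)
Your proof is correct. The paper gives no proof of this lemma; it only recalls it as a standard fact about Hausdorff convergence. Your argument is the standard one: approximate $x\in K$ by $x_n\in K_n\subset L_n$, use $\rho(L_n,L)\to 0$ and the closedness of $L$, then pass to complements for the open case.

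Taking complements in $\bar D$ rather than in the open ball $D$ is the right repair of Definition 2.11. It makes the complements nonempty (they contain $\partial D$) and compact. It also agrees with how the paper itself uses the convergence, e.g.\ $\bar D\setminus\Omega_n\xrightarrow{H}\bar D\setminus\Omega$ in the proof of Proposition 2.21.
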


\begin{lemma}
- The finite intersection of open sets is stable under Hausdorff convergence:
\[ \left. \begin{array}{l} \Omega_n^1\overset{H}{\longrightarrow}\Omega^1 \\ \Omega_n^2\overset{H}{\longrightarrow}\Omega^2 \end{array} \right\} \implies \Omega_n^1\cap \Omega_n^2\overset{H}{\longrightarrow}\Omega^1\cap \Omega^2.\]
- The finite union of compact sets is stable under Hausdorff convergence:
\[\left. \begin{array}{l} K_n^1\overset{H}{\longrightarrow}K^1 \\ K_n^2\overset{H}{\longrightarrow}K^2 \end{array} \right\} \implies K_n^1\cup K_n^2\overset{H}{\longrightarrow}K^1\cup K^2.\]
\end{lemma}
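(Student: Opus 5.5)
Both assertions reduce to the definition of \(d^H\), the complementation convention \(d_H(\Omega_n,\Omega)=d^H(D\backslash\Omega_n,D\backslash\Omega)\), and De Morgan's laws. I will prove the statement about compact sets first and then deduce the statement about open sets from it.

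\textbf{Unions of compact sets.} Assume \(d^H(K_n^i,K^i)\to 0\) for \(i=1,2\), and bound the two one-sided excesses separately. Take \(x\in K_n^1\cup K_n^2\); say \(x\in K_n^1\). Then
\[d(x,K^1\cup K^2)\le d(x,K^1)\le \rho(K_n^1,K^1).\]
Hence
\[\rho(K_n^1\cup K_n^2,\,K^1\cup K^2)\le \max\bigl(\rho(K_n^1,K^1),\rho(K_n^2,K^2)\bigr).\]
The same argument with the roles of the sequences and the limits exchanged gives
\[\rho(K^1\cup K^2,\,K_n^1\cup K_n^2)\le \max\bigl(\rho(K^1,K_n^1),\rho(K^2,K_n^2)\bigr).\]
Combining the two bounds,
\[d^H(K_n^1\cup K_n^2,\,K^1\cup K^2)\le \max\bigl(d^H(K_n^1,K^1),\,d^H(K_n^2,K^2)\bigr),\]
and the right-hand side tends to \(0\).

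\textbf{Intersections of open sets.} Apply the union statement to the compact sets \(K_n^i=D\backslash\Omega_n^i\) and \(K^i=D\backslash\Omega^i\). Here \(D\) must be understood as the closed ball \(\overline{D}\) so that these complements are compact; this is implicit in the paper's definition, and I will adopt that convention. By De Morgan,
\[D\backslash(\Omega_n^1\cap\Omega_n^2)=K_n^1\cup K_n^2,\qquad D\backslash(\Omega^1\cap\Omega^2)=K^1\cup K^2.\]
The convergence \(\Omega_n^i\overset{H}{\longrightarrow}\Omega^i\) means exactly \(d^H(K_n^i,K^i)\to 0\), so the first part gives
\[d_H(\Omega_n^1\cap\Omega_n^2,\,\Omega^1\cap\Omega^2)\to 0.\]

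\textbf{Remaining issue.} There is no real analytic obstacle; the only point needing care is nonemptiness, since \(d^H\) is defined only for nonempty compact sets. The unions are nonempty as soon as the \(K_n^i\) are. For the open sets, the complements within \(\overline{D}\) always contain \(\partial D\), so they are nonempty. I will add a remark making this convention explicit.
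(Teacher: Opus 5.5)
Your proof is correct. The paper does not prove this lemma at all (it is only recalled from the standard literature), and your argument is exactly the standard one: the estimate $d^H(K_n^1\cup K_n^2,K^1\cup K^2)\le\max\bigl(d^H(K_n^1,K^1),d^H(K_n^2,K^2)\bigr)$, followed by De Morgan on the complements. Taking complements in $\overline{D}$ is the right way to make the paper's definition meaningful (the paper itself silently does this in the proof of Proposition 2.21, where it writes $\bar D\setminus\Omega_n$), and your observation that $\partial D\subset\overline{D}\setminus\Omega$ settles the nonemptiness requirement.
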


\begin{proposition}
If \(\Omega_{n}\) is a sequence of open subsets of \(D\) and \(\Omega\) is an open subset of \(D\) such that \(\Omega_{n}\overset {H}{\longrightarrow}\Omega\) then:
\begin{enumerate}
\item Every compact subset of \(\Omega\) is included in \(\Omega_{n}\) for \(n\) large enough.
\item For all \(x\) in \(\partial \Omega\), \(\lim_{n\to +\infty}d(x,\partial \Omega_n) = 0\).
\end{enumerate}
\end{proposition}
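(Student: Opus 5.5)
Both items will follow from unpacking the definition $d_H(\Omega_n,\Omega)=d^H(D\setminus\Omega_n,D\setminus\Omega)\to 0$ in the definition of Hausdorff convergence. Write $K_n=\overline{D}\setminus\Omega_n$ and $K=\overline{D}\setminus\Omega$; these are compact and $d^H(K_n,K)\to 0$. We assume, as the paper implicitly does, that $\overline{D}\setminus\Omega$ is nonempty. The whole argument rests on the two one-sided quantities $\rho(K_n,K)$ and $\rho(K,K_n)$, and on compactness to pass from pointwise to uniform statements.

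\textbf{Item 1.} Let $L\subset\Omega$ be compact. Since $L$ and $K$ are disjoint compact sets, $\eta:=d(L,K)>0$. Choose $n_0$ such that $\rho(K_n,K)<\eta$ for $n\ge n_0$. Then every $y\in K_n$ satisfies $d(y,K)<\eta$. Hence $y\notin L$, because points of $L$ lie at distance at least $\eta$ from $K$. Therefore $L\cap K_n=\emptyset$, that is, $L\subset\Omega_n$ for all $n\ge n_0$. This step is routine.

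\textbf{Item 2.} Let $x\in\partial\Omega$ and fix $\varepsilon>0$. Since $\Omega$ is open, $x\notin\Omega$, so $x\in K$. For $n$ large we have $\rho(K,K_n)<\varepsilon$, so there is $y_n\in K_n$ with $|x-y_n|<\varepsilon$, meaning $y_n\notin\Omega_n$. Because $x\in\partial\Omega$, there is a point $z\in\Omega$ with $|x-z|<\varepsilon$. Choose $r>0$ with $\overline{B(z,r)}\subset\Omega$. By item 1 applied to the compact set $\overline{B(z,r)}$, we get $z\in\Omega_n$ for all large $n$. The segment $[z,y_n]$ joins a point of $\Omega_n$ to a point outside $\Omega_n$, so it meets $\partial\Omega_n$ at some point $w_n$. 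Then
\[
|x-w_n|\le \max(|x-z|,|x-y_n|)<\varepsilon,
\]
since $w_n$ lies on a segment whose endpoints are both in $B(x,\varepsilon)$, and that ball is convex. Hence $d(x,\partial\Omega_n)<\varepsilon$ for all large $n$, which gives the limit.

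The main (mild) obstacle is item 2. Hausdorff closeness of complements only places points \emph{outside} $\Omega_n$ near $x$. The argument above also needs points \emph{inside} $\Omega_n$ near $x$, which item 1 supplies, and then a connectedness argument along a segment produces the boundary point $w_n$. A secondary point is the ambient set: if $\Omega_n\subset D$ with $D$ open, one should work in $\overline{D}$ so that the complements are compact, and the segment argument is unaffected.
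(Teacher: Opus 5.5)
Your proof is correct. The paper gives no argument for this proposition; it only recalls it as a known property of Hausdorff convergence, a classical fact found e.g.\ in Henrot--Pierre. Your proof is the standard one. Item 1 follows from $\rho(K_n,K)\to 0$ together with $d(L,\overline{D}\setminus\Omega)>0$. Item 2 follows from $\rho(K,K_n)\to 0$, combined with item 1 and the connectedness argument on the segment $[z,y_n]$.

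Reading the definition with $\overline{D}$ in place of $D$ is the right repair, since $D\setminus\Omega_n$ is not compact when $D$ is open. Two minor simplifications are available. Your nonemptiness assumption holds automatically, because $\partial D\subset\overline{D}\setminus\Omega$. In item 2, applying item 1 to the compact set $\{z\}$ already gives $z\in\Omega_n$ for large $n$, so the ball $\overline{B(z,r)}$ is not needed.
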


\begin{definition}[Compact convergence]
Let \((\Omega_{n})_{n\geq 1}\) and \(\Omega\) be a sequence of open subsets of \(\mathbb{R}^{N}\).
We say that the sequence \(\Omega_{n}\) converges in the sense of compact sets to \(\Omega\) (denote \(\Omega_{n}\overset {K}{\longrightarrow}\Omega\)) if:
\begin{enumerate}
\item For all \(K\) compact \(\subset \Omega\), we have \(K\subset \Omega_{n}\) for \(n\) large enough.
\item For all \(L\) compact \(\subset \bar{\Omega}^{c}\), we have \(L\subset \bar{\Omega}_{n}^{c}\) for \(n\) large enough.
\end{enumerate}
\end{definition}

\begin{definition}[L convergence]
Let \((\Omega)_{n\geq 1}\) and \(\Omega\) be a sequence of open subsets of \(\mathbb{R}^{N}\).
We say that \(\Omega_{n}\) converges in the sense of characteristic functions to \(\Omega\) (denote \(\Omega_{n}\overset {L}{\longrightarrow}\Omega\)) if
\[\chi_{\Omega_n}\longrightarrow \chi_\Omega \mathrm{in}L_{loc}^p (\mathbb{R}^N),\forall p\in [1,\infty ],\mathrm{when} n\longrightarrow \infty .\]
\end{definition}

\begin{proposition}
Let \(\Omega_{n}\) be a sequence of open subsets which converges, in the compact sense, to an open subset \(\Omega\).
If \(\partial \Omega\) has a null measure, then \(\Omega\) converges to \(\Omega\) in the sense of characteristic functions.
\end{proposition}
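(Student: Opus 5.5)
The plan is to show that \(\chi_{\Omega_n}\to\chi_\Omega\) almost everywhere on \(\mathbb{R}^N\), and then upgrade this to \(L^p_{loc}\) convergence by dominated convergence. (The statement contains a typo: the conclusion is that \(\Omega_n\) converges to \(\Omega\).) Since \(\partial\Omega\) has null measure, it is enough to prove pointwise convergence on the two open sets \(\Omega\) and \(\bar\Omega^c\), because \(\mathbb{R}^N=\Omega\cup\partial\Omega\cup\bar\Omega^c\).

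First take \(x\in\Omega\). The singleton \(\{x\}\) is a compact subset of \(\Omega\), so condition (1) of Definition 2.14 gives \(x\in\Omega_n\) for \(n\) large. Hence \(\chi_{\Omega_n}(x)=1=\chi_\Omega(x)\) eventually.

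Next take \(x\in\bar\Omega^c\). The singleton \(\{x\}\) is a compact subset of \(\bar\Omega^c\), so condition (2) gives \(x\in\bar\Omega_n^c\subset\Omega_n^c\) for \(n\) large. Hence \(\chi_{\Omega_n}(x)=0=\chi_\Omega(x)\) eventually. Together, these two cases give \(\chi_{\Omega_n}\to\chi_\Omega\) pointwise off \(\partial\Omega\), which is almost everywhere by hypothesis.

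For the \(L^p_{loc}\) conclusion with \(1\le p<\infty\), fix a compact \(K\). Then \(|\chi_{\Omega_n}-\chi_\Omega|^p\le \chi_K\) on \(K\), and \(\chi_K\) is integrable, so dominated convergence gives \(\int_K|\chi_{\Omega_n}-\chi_\Omega|^p\to0\). Equivalently, since \(|\chi_{\Omega_n}-\chi_\Omega|^p=\chi_{\Omega_n\triangle\Omega}\), this says \(|(\Omega_n\triangle\Omega)\cap K|\to0\).

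The main obstacle is the case \(p=\infty\). On any set of positive measure, \(\|\chi_{\Omega_n}-\chi_\Omega\|_{L^\infty(K)}\) equals either \(0\) or \(1\). Convergence in \(L^\infty_{loc}\) would therefore require \(|(\Omega_n\triangle\Omega)\cap K|=0\) for \(n\) large, and this fails in general. For example, take \(\Omega_n=B(0,1+1/n)\) and \(\Omega=B(0,1)\): then \(\Omega_n\to\Omega\) in the compact sense, but \(\|\chi_{\Omega_n}-\chi_\Omega\|_{L^\infty}=1\) for every \(n\). So I would prove the result for all \(p\in[1,\infty)\), and interpret the \(p=\infty\) case as weak-\(*\) convergence in \(L^\infty\). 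That weak-\(*\) statement does follow, again by dominated convergence: \(\int (\chi_{\Omega_n}-\chi_\Omega)\,g\to0\) for every \(g\in L^1\). I would add a remark that Definition 2.15 should be read in this way.
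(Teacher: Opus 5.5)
Your proof is correct and is the standard argument: apply the two conditions of compact convergence to singletons to get \(\chi_{\Omega_n}\to\chi_\Omega\) pointwise on \(\Omega\cup\bar\Omega^c\), which is almost everywhere because \(|\partial\Omega|=0\), then use dominated convergence on compacts. The paper gives no proof of its own here; it only recalls the result from the literature, so there is no alternative route to compare against.

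Your caveat about \(p=\infty\) is also right. Read literally with \(p\in[1,\infty]\) in the \(L\)-convergence definition, the proposition is false, and your example \(B(0,1+1/n)\to B(0,1)\) shows it. The usual definition (e.g. in Henrot--Pierre) takes \(p\in[1,\infty)\), equivalently \(L^1_{loc}\), and that is exactly what you prove. Your weak-\(*\) reading for \(p=\infty\) follows from the same dominated convergence argument.

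A minor point: in the paper's numbering, the compact-convergence and \(L\)-convergence definitions are 2.16 and 2.17, not 2.14 and 2.15.
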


\begin{theorem}
Let \(\mathcal{O}_C\) be the class of all domains with the \(C\)-GNP.
If \(\Omega_{n}\in \mathcal{O}_{C}\), then there exists an open subset \(\Omega \subset D\) and a subsequence (denoted by \(\Omega_{n}\)) such that:
\begin{enumerate}
\item \(\Omega_{n}\overset {H}{\longrightarrow}\Omega\)
\item \(\Omega_{n}\overset {K}{\longrightarrow}\Omega\)
\item \(\chi_{\Omega_{n}}\) converges to \(\chi_{\Omega}\) in \(L^{1}(D)\)
\item \(\Omega \in \mathcal{O}_{C}\)
\end{enumerate}
Furthermore, the assertions (1), (2), and (3) are equivalent.
\end{theorem}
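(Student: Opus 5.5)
The plan has four steps: Hausdorff compactness, stability of the $C$-GNP under the limit, the upgrade to compact and $L^1$ convergence, and the equivalence of the three convergences.

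\textbf{Step 1: extraction of a limit.} Since every $\Omega_n\subset D$ and $D$ is bounded, the compact sets $\overline D\setminus\Omega_n$ lie in a fixed compact set. By the Blaschke selection theorem the space of nonempty compact subsets of $\overline D$ is compact for $d^H$. So a subsequence satisfies $\overline D\setminus\Omega_n\to K$, and we set $\Omega=D\setminus K$. This gives (1). By Lemma 2.12, the inclusion $\operatorname{int}(C)\subset\Omega_n$ passes to the limit, so $\operatorname{int}(C)\subset\Omega$.

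\textbf{Step 2: stability of the $C$-GNP.} Rather than work with normals, which need not converge, I would use Proposition 2.10 and show that $\Omega$ satisfies the $C$-SP.

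Take $x\in\partial\Omega\setminus C$. By Proposition 2.14(2) there are points $x_n\in\partial\Omega_n$ with $x_n\to x$, and $x_n\notin C$ for $n$ large. Suppose $CN_x\cap\Omega$ contained a point $y$. Since $\Omega$ is open, a small closed ball $\overline B(y,r)$ lies in $\Omega$, and by Proposition 2.14(1) it lies in $\Omega_n$ for $n$ large. The normal cones $CN_{x_n}$ should vary semicontinuously with $x_n$, because they are defined through supporting half-spaces to $C$, which is compact and convex. So $\overline B(y,r)$ meets $CN_{x_n}$ for large $n$, contradicting the $C$-SP of $\Omega_n$.

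Conditions (2) and (3) must also pass to the limit. For local Lipschitz regularity, I would use the fact that the $C$-SP forces, at each boundary point away from $C$, a uniform exterior cone whose aperture depends only on the distance to $C$. This gives uniform Lipschitz graphs locally, and these are stable under Hausdorff convergence. For the connectedness of $\Delta_c\cap\Omega$, I would use the star-shapedness along rays that the $C$-SP provides: each $\Delta_c\cap\Omega_n$ is a segment starting at $c$, and the limit of segments is again a segment. This yields (4).

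\textbf{Step 3: compact and $L^1$ convergence.} Part (1) of Definition 2.15 is Proposition 2.14(1). For part (2), let $L$ be a compact subset of $\overline\Omega^{\,c}$. If $L$ met $\overline{\Omega_n}$ along a subsequence, we would get points of $\overline{\Omega_n}$ converging to a point $z$ at positive distance from $\overline\Omega$. I would rule this out with the uniform cone condition from Step 2. That condition gives a uniform interior ball or cone near $z$ contained in $\Omega_n$, since its size is bounded below by the distance to $C$, and near $C$ we know $\operatorname{int}C\subset\Omega$. By Hausdorff convergence this cone or ball would then have to lie in $\Omega$, a contradiction. This gives (2).

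Next, $\partial\Omega\setminus C$ is locally Lipschitz, so it has measure zero. The set $\partial\Omega\cap C\subset\partial C$ also has measure zero because $C$ is convex. Proposition 2.17 then gives (3).

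\textbf{Step 4: equivalence, and the main obstacle.} Step 3 already shows that (1) implies (2) and that (2) implies (3) in the class. For (3) implies (1), take a sequence converging in $L^1$. Extract a Hausdorff-convergent subsequence with limit $\Omega'$; by Step 3 it also converges in $L^1$ to $\Omega'$, so $|\Omega\triangle\Omega'|=0$. Both sets are open, lie in $\mathcal O_C$, and have null boundaries, so they coincide, and uniqueness of the limit gives convergence of the whole sequence. (2) implies (1) is handled in the same way. I expect the main obstacle to be the uniform cone estimate used in Steps 2 and 3, namely that the $C$-GNP forces cones of controlled size, uniformly away from $C$. This estimate is essential, and the argument of Lemma 2.8 with the sets $E_x$ is the model I would adapt.
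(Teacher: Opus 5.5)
Your route matches what the paper relies on: Blaschke extraction, passing the $C$-SP to the limit by lower semicontinuity of $x\mapsto CN_x$ off $C$, the null-boundary criterion for $L^1$, and subsequence-uniqueness for the converse implications. The paper itself defers the proof to [1]; compare the proof of Theorem 2.25. Steps 1--3 are sound in outline.

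The cone estimate you flag follows from one observation. Suppose $y\notin\Omega_n\cup C$ and $z\in CN_y\cap\Omega_n$. For every $y_t=y+t(z-y)$ with $t\in[0,1)$ we have $(z-y_t)\cdot(c-y_t)=(1-t)\bigl[(z-y)\cdot(c-y)-t|z-y|^2\bigr]\le 0$, so $z\in CN_{y_t}$. The first point of $[y,z]$ on $\partial\Omega_n$ is not in $C$, since a point $y_t\in C$ with $t>0$ would give $(z-y)\cdot(y_t-y)=t|z-y|^2>0$. This contradicts the $C$-SP.

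Hence $E_x\subset\Omega_n\cup C$ for $x\in\Omega_n$. This gives interior cones with axis toward $P_C(x)$ and size controlled by $d(x,C)$ and $\operatorname{diam}C$. Applied to the limit, the same observation gives conditions (2) and (3) of Definition 2.1 without a separate stability argument.

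There are two caveats. First, Step 3 needs $C\subset\overline\Omega$, i.e.\ $\operatorname{int}C\neq\emptyset$. For the segment $C=[0,1]\times\{0\}$, the sets $\{y^2<\varepsilon_n^2x(1-x)\}$ have the $C$-GNP and Hausdorff-converge to $\emptyset$, but not in the compact sense. Second, Step 2 never checks that the limit is connected, and this can fail. Take $C$ the closed unit disk and $A_n$ open arcs of $\partial C$ shrinking to a point. Then $B(0,2)\setminus(\partial C\setminus A_n)$ are $C$-GNP domains whose Hausdorff limit is $B(0,1)\cup\{1<|x|<2\}$.

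The genuine gap is in Step 4. The claim ``both sets are open, lie in $\mathcal O_C$, and have null boundaries, so they coincide'' is false. Take $C$ the closed unit disk and $A\subset\partial C$ a closed arc. Then $B(0,2)$ and $B(0,2)\setminus A$ are both open, have null boundaries, differ by a null set, and both satisfy Definition 2.1. Indeed, the arc lies in $C$, so (2) and (4) only concern the outer circle, and every $\Delta_c\cap\Omega$ is still an interval.

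What identifies a.e.-equal open sets is the Carath\'eodory property $\operatorname{int}\overline\Omega=\Omega$. If $B(x,r)\subset\Omega$ and $|\Omega\triangle\Omega'|=0$, then $\Omega'$ is dense in $B(x,r)$, so $x\in\operatorname{int}\overline{\Omega'}=\Omega'$. Your (2)$\Rightarrow$(1) needs this too: compact convergence to $\Omega$ together with Hausdorff (hence compact) convergence to $\Omega'$ only gives $\Omega\subset\Omega'\subset\overline\Omega$. This is exactly the property the paper invokes, in the proof of Proposition 2.20 and in the step (2)$\Rightarrow$(1) of Theorem 4.6.

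The same example shows the property is not automatic in $\mathcal O_C$ as defined. The constant sequence $B(0,2)$ converges to $B(0,2)\setminus A$ in the compact and $L^1$ senses, but not in the Hausdorff sense. Your cone estimates do give $\operatorname{int}\overline\Omega=\Omega$ near $\partial\Omega\setminus C$. Along $\partial\Omega\cap\partial C$, however, it must be imposed on the class (or derived from a strengthened definition) and then used in place of ``null boundaries.'' Without it, the equivalence of (1), (2), (3) cannot be established.
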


For the proof of this theorem, see [1].

In the sequel, we will state and prove some interesting propositions.
\begin{proposition}
Let \(\Omega_{n}\), \(\Omega\) be in \(\mathcal{O}_C\) such that \(\Omega_{n}\overset {H}{\longrightarrow}\Omega\) then:
\[\bar{\Omega}_n\overset {H}{\longrightarrow}\bar{\Omega}.\]
\end{proposition}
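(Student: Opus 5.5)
We must show that $\bar\Omega_n \to \bar\Omega$ in the Hausdorff distance of compact sets, i.e. $d^H(\bar\Omega_n,\bar\Omega)\to 0$. The plan is to control the two one-sided excess functions $\rho(\bar\Omega,\bar\Omega_n)$ and $\rho(\bar\Omega_n,\bar\Omega)$ separately. The first uses only the general properties of Hausdorff convergence of open sets. The second is where the $C$-GNP enters.

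\textbf{Step 1: $\rho(\bar\Omega,\bar\Omega_n)\to 0$.} Fix $\varepsilon>0$. By compactness of $\bar\Omega$, cover it by finitely many balls $B(x_i,\varepsilon/2)$ with $x_i\in\bar\Omega$. For each $i$ we pick a point $y_i\in\Omega$ with $|y_i-x_i|<\varepsilon/4$. The finite set $\{y_i\}$ is a compact subset of $\Omega$, so by Proposition 2.13(1) it lies in $\Omega_n$ for $n$ large. Hence every point of $\bar\Omega$ is within $\varepsilon$ of $\bar\Omega_n$.

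\textbf{Step 2: $\rho(\bar\Omega_n,\bar\Omega)\to 0$.} We argue by contradiction. Suppose there are $\varepsilon>0$ and points $z_n\in\bar\Omega_n$ with $d(z_n,\bar\Omega)\ge\varepsilon$. We may take $z_n\in\Omega_n$, and after extraction $z_n\to z$ with $d(z,\bar\Omega)\ge\varepsilon$. Thus $\bar B(z,\varepsilon/2)\subset D\setminus\bar\Omega$.

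Hausdorff convergence of the complements, $D\setminus\Omega_n\to D\setminus\Omega$, alone does not exclude thin pieces of $\Omega_n$ (for instance cracks or thin fingers) that vanish in the limit of complements. This is the main obstacle, and we remove it using the geometry of the class. By Definition 2.9 and Proposition 2.10, $\Omega_n$ satisfies the $C$-SP. Heuristically, each $\Omega_n$ is then "star-shaped towards $C$": for $x\in\Omega_n$, the set $E$ of points between $x$ and $C$ used in the proof of Lemma 2.7 lies in $\Omega_n$.

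We make this quantitative as in Lemma 2.7. Take $x_n\in\partial\Omega_n$ farthest from $C$ along the ray through $z_n$, or simply $x_n=z_n$ pushed to the boundary. Then $\Omega_n$ contains a truncated cone $B(x_n,r)\cap C(x_n,\alpha)$ with vertex $x_n$ and axis pointing toward $C$. Here $r,\alpha>0$ depend only on $d(x_n,C)$ and $C$, hence are uniform in $n$, since $z_n$ stays at distance $\ge\varepsilon$ from $\bar\Omega\supset C$.

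Passing to the limit, a fixed small closed ball $K$ inside this cone near $z$ (with radius depending on $\varepsilon,\alpha$) lies in $\Omega_n$ for all large $n$ and also in $D\setminus\bar\Omega$. A point $w$ of $K$ satisfies $w\in D\setminus\Omega$, but $d(w,D\setminus\Omega_n)$ is bounded below by the radius of $K$. This contradicts $\rho(D\setminus\Omega,D\setminus\Omega_n)\to 0$.

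If the cone argument is delicate near $z$, we would first try the alternative of using the equivalence in Theorem 2.16. From $\Omega_n\overset{H}{\to}\Omega$ we get compact convergence. Property (2) of Definition 2.14, applied to $L=\bar B(z,\varepsilon/2)\subset\bar\Omega^c$, then forces $L\subset\bar\Omega_n^c$ for large $n$, contradicting $z_n\in\Omega_n\cap L$. This route is shorter, and we would adopt it as the main argument, keeping the cone estimate only as justification that the equivalence holds within $\mathcal{O}_C$.

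Combining Steps 1 and 2 gives $d^H(\bar\Omega_n,\bar\Omega)\to0$.
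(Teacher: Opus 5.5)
Your final argument is correct and is essentially the paper's proof. The paper extracts a Hausdorff limit $K$ of $\bar\Omega_n$ and shows two inclusions:
\begin{itemize}
\item $\bar\Omega\subset K$, because closed balls in $\Omega$ are eventually contained in $\Omega_n$ (this is your Step~1);
\item $K\subset\bar\Omega$, by applying the H$\Rightarrow$K implication of Theorem~2.19 to a closed ball in $\bar\Omega^{c}$ (this is your Step~2).
\end{itemize}
Your direct pointwise version even avoids the paper's detour through $\operatorname{int}(K)$ and the Carath\'eodory property. The cone digression is unnecessary once Theorem~2.19 is invoked, and as sketched it is not rigorous: the choice of $x_n$ and the uniformity of the cone near $z$ are not justified. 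You should simply drop it.
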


\begin{proof}
Suppose that \(\bar{\Omega}_{n}\xrightarrow{H}K\) a bounded set, we must show that \(\bar{\Omega} = K\)

\(K\subset \bar{\Omega}\): Let \(x\in \Omega\). There exists \(r\) such that \(\bar{B} (x,r)\subset \Omega\), so \(\bar{B} (x,r)\subset \Omega_{n}\) for \(n\) large enough, then \(\bar{B} (x,r)\subset \Omega_{n}\) for \(n\) large enough.
By Lemma 2.13 we get \(\bar{B} (x,r)\subset K\) so \(x\in K\). We get \(\Omega \subset K\), then \(\bar{\Omega}\subset K\).

\(K\supset \bar{\Omega}\): We have \(\Omega \subset K\), and \(\Omega\) is of Caratheodory \((\operatorname {int}(\bar{\Omega}) = \Omega)\), it suffices to show that \(\partial \Omega = \partial K\).
Suppose that \(\partial \Omega \neq \partial K\), then there exists \(x\in \operatorname {int}(K)\cap \bar{\Omega}^{c}\) an open set, so there exists a bounded ball \(\bar{B}\subset \operatorname {int}(K)\cap \bar{\Omega}^{c}\), then \(\bar{B}\subset \bar{\Omega}^{c}\). We have \(\Omega_{n}\xrightarrow{H}\Omega\) by Theorem 2.19, we get \(\Omega_{n}\xrightarrow{K}\Omega\), then \(B\subset \bar{B}\subset \bar{\Omega}_{n}^{c}\) for \(n\) large enough, so \(\bar{\Omega}_{n}\subset B^{c}\) for \(n\) large enough.
And \(\bar{\Omega}_{n}\xrightarrow{H}K\), then \(K\subset B^{c}\). On the other side, \(\bar{B}\subset \operatorname {int}(K)\cap \bar{\Omega}^{c}\), then \(\bar{B}\subset \operatorname {int}(K)\), so \(B\subset \operatorname {int}(K)\subset K\), which gives a contradiction.
\end{proof}

\begin{proposition}
Let \(\Omega_{n}\) \(\Omega\) be in \(\mathcal{O}_{C}\). If \(\Omega_{n}\xrightarrow{H}\Omega\) then:
\[\partial \Omega_{n}\xrightarrow{H}\partial \Omega .\]
\end{proposition}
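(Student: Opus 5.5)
We prove that \(\partial\Omega_n\xrightarrow{H}\partial\Omega\) by combining the two Hausdorff convergences we already have, for complements and for closures. Write \(\partial\Omega_n=\bar{\Omega}_n\cap(D\backslash\Omega_n)\). By hypothesis \(D\backslash\Omega_n\xrightarrow{H}D\backslash\Omega\), and by the preceding proposition \(\bar{\Omega}_n\xrightarrow{H}\bar{\Omega}\). Intersection of compact sets is \emph{not} Hausdorff-continuous in general, so we cannot conclude directly. Instead we use compactness of the Hausdorff topology on compact subsets of \(\bar D\). We extract a subsequence with \(\partial\Omega_n\xrightarrow{H}L\) for some compact \(L\), and show \(L=\partial\Omega\). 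Uniqueness of the limit then gives convergence of the whole sequence.

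\textbf{Inclusion \(L\subset\partial\Omega\).} Let \(x\in L\). Then there are \(x_n\in\partial\Omega_n\) with \(x_n\to x\). Since \(x_n\in\bar{\Omega}_n\) and \(\bar{\Omega}_n\xrightarrow{H}\bar\Omega\), we get \(d(x_n,\bar\Omega)\to0\), hence \(x\in\bar\Omega\). Similarly, \(x_n\in D\backslash\Omega_n\) together with \(D\backslash\Omega_n\xrightarrow{H}D\backslash\Omega\) gives \(x\in D\backslash\Omega\). Therefore \(x\in\bar\Omega\backslash\Omega=\partial\Omega\). This is the stability of inclusion for limits (Lemma 2.13) applied to \(\partial\Omega_n\subset\bar\Omega_n\) and \(\partial\Omega_n\subset D\backslash\Omega_n\).

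\textbf{Inclusion \(\partial\Omega\subset L\).} Let \(x\in\partial\Omega\). By Proposition 2.15(2), \(d(x,\partial\Omega_n)\to0\). So we may pick \(y_n\in\partial\Omega_n\) with \(y_n\to x\), and the Hausdorff limit \(L\) must contain \(x\). Strictly speaking, Proposition 2.15 is stated for Hausdorff convergence of open sets only, which is exactly our hypothesis, so it applies.

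The main obstacle is the second inclusion if one wants a self-contained justification of Proposition 2.15(2). I would argue it directly. Let \(x\in\partial\Omega\) and \(r>0\). The ball \(B(x,r)\) meets \(\Omega\). Choose a small closed ball \(\bar B'\subset\Omega\cap B(x,r)\); by Proposition 2.15(1), \(\bar B'\subset\Omega_n\) for \(n\) large. On the other hand, \(x\in D\backslash\Omega\), so \(d(x,D\backslash\Omega_n)\to0\), and hence \(B(x,r)\) meets \(D\backslash\Omega_n\) for \(n\) large. Since \(B(x,r)\) is connected and meets both \(\Omega_n\) and its complement, it meets \(\partial\Omega_n\). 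This gives \(d(x,\partial\Omega_n)<2r\) eventually. The \(C\)-GNP hypothesis is used only through the preceding proposition on closures. It enters the first inclusion implicitly, since closures of \(C\)-GNP domains are Carathéodory-regular and so do not collapse in the limit.
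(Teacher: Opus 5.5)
Your proposal is correct and follows the paper's proof: both prove the two inclusions separately. For $L\subset\partial\Omega$ you use $\partial\Omega_n\subset\bar\Omega_n$ and $\partial\Omega_n\subset \bar D\setminus\Omega_n$, together with Proposition 2.20 and stability of inclusions. For $\partial\Omega\subset L$ you use Proposition 2.15(2). Your version is more careful than the paper's in two places: you justify the existence of a limit by Blaschke compactness and get convergence of the whole sequence from uniqueness of cluster points, and you give a direct connectedness proof of Proposition 2.15(2).
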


\begin{proof}
Suppose that \(\partial \Omega_{n}\xrightarrow{H}K\) a bounded set, we must show that \(\partial \Omega = K\)

\(K\subset \partial \Omega\): We have \(\Omega_{n}\xrightarrow{H}\Omega\), then \(\bar{D}\setminus \Omega_{n}\xrightarrow{H}\bar{D}\setminus \Omega\), and by Proposition 2.20 we get, \(\bar{\Omega}_{n}\xrightarrow{H}\bar{\Omega}\). We remark that \(\partial \Omega_{n}\subset \bar{D}\setminus \Omega_{n}\) and \(\partial \Omega_{n}\subset \bar{\Omega}_{n}\), then by Definition 2.12 we get, \(K\subset \bar{D}\setminus \Omega\) and \(K\subset \bar{\Omega}\), so \(K\subset \partial \Omega\).

\(K\supset \partial \Omega\): Let \(y\in \partial \Omega\), then by 2. of  Proposition 2.15, we get \(y\in \{x\in D;\exists x_{n}\in \partial \Omega_{n}, \ x_{n}\xrightarrow{n\to\infty}x\}\) and \(K = \{x\in D;\exists x_{n}\in \partial \Omega_{n}, \ x_{n}\xrightarrow{n\to\infty}x\}\), so \(y\in K\). Then \(\partial \Omega \subset K\) (see also \cite{Henrot2005}).
\end{proof}

\begin{remark}
In general,
\[\Omega_{n}\xrightarrow{H}\Omega \nRightarrow \bar{\Omega}_{n}\xrightarrow{H}\bar{\Omega}.\]
\[\Omega_{n}\xrightarrow{H}\Omega \nRightarrow \partial \Omega_{n}\xrightarrow{H}\partial \Omega .\]
As one can deduce from the following examples:

Example Let \(\Omega_{n} = ] - \frac{1}{n},\frac{1}{n} [\cup ]1,2[\xrightarrow{H}\Omega = ]1,2[\).

\(\bar{\Omega}_{n} = [-\frac{1}{n},\frac{1}{n} ]\cup [1,2]\xrightarrow{H}\{0\}\cup [1,2]\), which is different from \(\bar{\Omega} = [1,2]\).
\(\partial \Omega_{n} = \{-\frac{1}{n};\frac{1}{n};1;2\} \xrightarrow{H}\{0;1;2\}\), which is different from \(\partial \Omega = \{1;2\}\).

We remark that in \(\epsilon\)-cone, the implications are guaranteed., see [2].
\end{remark}

Here, we introduce our first class of domains. Subsequently, we will demonstrate their compactness concerning the Hausdorff topology.
\begin{definition}
Let \(x\in \Omega\), and \(K_{x}\) be the convex hull of \((\{x\} \cup C)\).
Let \(CN_{x}\) be the normal cone to \(K_{x}\) at \(x\) we mean the set:
\[CN_{x} = \{y\in D\mid (y - x).(c - x)\leq 0;\forall c\in C\}\]
Put
\[\mathcal{O}_{NC} = \{\operatorname {int}(C)\subset \Omega \subset D\mid \Omega \cap CN_x = \emptyset ;\forall x\in \partial \Omega \} .\]
\end{definition}

\begin{remark}
The definition of \(CN_{x}\) implies that if \(z\in \operatorname {int}(CN_{x})\) and if \(H\) denotes the hyperplane passing through \(x\) and orthogonal to \(\overline{x^{\perp}}\), then the convex set \(C\) is contained in the open half-space bounded by \(H\) and not containing \(z\).
Conversely, if \(z\) is a point (distinct from \(x\)) such that \(H\) is the hyperplane defined above and \(C\) is included in the open half-space bounded by \(H\) and not containing \(z\), then \(z\in \operatorname {int}(CN_{x})\).
\end{remark}
\begin{theorem}
Let \((\Omega_{n})_{n\in \mathbb{N}}\subset \mathcal{O}_{NC}\) and \(\Omega\) be an open subset of \(D\).
If \(\Omega_{n}\xrightarrow{H}\Omega\), then \(\Omega \in \mathcal{O}_{NC}\).
\end{theorem}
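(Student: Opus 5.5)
We must show two things about the limit \(\Omega\): that \(\operatorname{int}(C)\subset\Omega\), and that \(\Omega\cap CN_x=\emptyset\) for every \(x\in\partial\Omega\).

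The first follows from Lemma 2.13. Each \(\Omega_n\) contains \(\operatorname{int}(C)\), and inclusion of open sets is stable under Hausdorff convergence, so \(\operatorname{int}(C)\subset\Omega\). Equivalently, every compact \(K\subset\operatorname{int}(C)\) lies in \(D\setminus\bigl(D\setminus\Omega_n\bigr)^{\circ}\) for all \(n\), and this passes to the limit of the complements.

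For the second condition we argue by contradiction. Suppose \(x\in\partial\Omega\) and \(y\in\Omega\cap CN_x\). Since \(\Omega\) is open, there is \(r>0\) with \(\bar B(y,r)\subset\Omega\). By part 1 of Proposition 2.15, \(\bar B(y,r)\subset\Omega_n\) for \(n\) large. By part 2 of Proposition 2.15, there are points \(x_n\in\partial\Omega_n\) with \(x_n\to x\).

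The key step is to perturb \(y\) into \(CN_{x_n}\) while staying inside \(\bar B(y,r)\). We look for points \(y_n\to y\) with \(y_n\in CN_{x_n}\). The natural choice is the translate \(y_n=y+(x_n-x)\). Then
\[
(y_n-x_n)\cdot(c-x_n)=(y-x)\cdot(c-x)+(y-x)\cdot(x-x_n)\qquad(c\in C).
\]
The first term is \(\le 0\). The second term is small, but it may be positive, so \(y_n\) need not lie in \(CN_{x_n}\).

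This is the main obstacle: \(y\) may lie on the boundary of the normal cone, so some slack is needed. We would handle it by first replacing \(y\) with a point in \(\operatorname{int}(CN_x)\cap B(y,r/2)\). This uses the fact that the cone \(CN_x-x=\{v:\ v\cdot(c-x)\le 0\ \forall c\in C\}\) is a closed convex cone.

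\textbf{Case 1: \(x\notin C\).} Then \(CN_x-x\) has nonempty interior, since it contains the direction \(x-p\), where \(p\) is the projection of \(x\) onto \(C\). Moreover \(\max_{c\in C}(x-p)\cdot(c-x)=-|x-p|^2<0\). So \(y'=y+\eta(x-p)\) satisfies \((y'-x)\cdot(c-x)\le-\eta|x-p|^2\) uniformly in \(c\in C\). This is a strict margin. Since \(C\) is compact and \(x_n\to x\), the translates \(y'_n=y'+(x_n-x)\) satisfy \((y'_n-x_n)\cdot(c-x_n)<0\) for all \(c\in C\) when \(n\) is large. Hence \(y'_n\in CN_{x_n}\cap\Omega_n\), contradicting \(\Omega_n\in\mathcal O_{NC}\).

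\textbf{Case 2: \(x\in C\).} Here \(x\) cannot lie in \(\operatorname{int}(C)\subset\Omega\), since \(x\in\partial\Omega\) and \(\Omega\) is open. So \(x\in\partial C\) and \(CN_x-x\) is the normal cone of \(C\) at \(x\). For \(y\in CN_x\), \(y\neq x\), take the supporting direction \(v=y-x\), so \(v\cdot(c-x)\le 0\) for all \(c\in C\). Then \(x+tv\) for small \(t>0\) lies outside \(C\), and its projection onto \(C\) is \(x\). We would handle this case by approximating \(x\) from \(\partial\Omega\setminus C\) or by using points \(x_n\) of \(\partial\Omega_n\). Alternatively, one can note that \(\Omega\ni y\) and \(\operatorname{int}(C)\subset\Omega\) lie on opposite sides of the supporting hyperplane at \(x\), and reduce to Case 1 along the sequence \(x_n\). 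If \(x_n\in C\) infinitely often, the same supporting-hyperplane argument gives \(y'\in CN_{x_n}\) directly, after the perturbation \(y'=y+\eta v\) toward the outward normal.

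In each case we obtain a point of \(\Omega_n\cap CN_{x_n}\) with \(x_n\in\partial\Omega_n\), which contradicts \(\Omega_n\in\mathcal O_{NC}\). Therefore \(\Omega\in\mathcal O_{NC}\).
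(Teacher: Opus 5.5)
Your proof of $\operatorname{int}(C)\subset\Omega$ and your Case 1 are correct. The strict-margin perturbation $y'=y+\eta(x-p)$ is exactly what makes the paper's loose step ``$CN_{x_n}\xrightarrow{H}CN_x$'' rigorous when $x\notin C$.

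The gap is Case 2. It contains no actual argument, and its specific claims are false. At points of $C$ the map $x'\mapsto CN_{x'}$ is only outer semicontinuous, so nearby points can have much smaller cones. For example, take $C=[-1,1]^2$, $x=(1,1)$, $y=(1,2)\in CN_x$ and $x_n=(1,1-\frac1n)\in C$. Then $CN_{x_n}$ is the horizontal ray issuing from $x_n$, which stays at distance about $1$ from $y$. So no point near $y$, translated or not, lies in $CN_{x_n}$. Likewise, ``reducing to Case 1'' along points $x_n\notin C$ fails, because the margin $\eta|x_n-P_C(x_n)|^2$ tends to $0$.

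This gap cannot be filled: the statement is false when $\partial\Omega$ meets $C$, and the paper's proof has the same hole. Take $C=[-1,1]\times\{0\}$, as in the paper's own examples, so that $\operatorname{int}(C)=\emptyset$. Let $e=(1,0)$, $D=B(0,10)$, $H=B(0,1)\cap\{z_2<0\}$, $\varepsilon_n\downarrow0$, and
\[
\Omega_n=B(e,1)\cup B(-e,1)\cup B(0,\varepsilon_n)\cup H,\qquad \Omega=B(e,1)\cup B(-e,1)\cup H.
\]
If $z\in CN_x$, then $|z-c|^2=|x-c|^2+|z-x|^2-2(z-x)\cdot(c-x)\ge|x-c|^2$ for every $c\in C$. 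Applying this with $c=\pm e$ and $c=0$ shows that, for $x\in\partial\Omega_n$, the set $CN_x$ misses the three balls, and it misses $H$ whenever $|x|\ge1$.

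If $x\in\partial\Omega_n$ with $|x|<1$, then $x_2>0$. Indeed, points $(s,0)$ with $0<|s|<1$ lie in $B(\pm e,1)$, and $0\in B(0,\varepsilon_n)$. Add $(1+x_1)$ times $(z-x)\cdot(e-x)\le0$ to $(1-x_1)$ times $(z-x)\cdot(-e-x)\le0$; this gives $-2x_2(z_2-x_2)\le0$. Hence $z_2\ge x_2>0$ and $CN_x\cap H=\emptyset$, so $\Omega_n\in\mathcal{O}_{NC}$.

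The complements of $\Omega_n$ and $\Omega$ differ only inside $B(0,\varepsilon_n)$. For each $p\in B(0,\varepsilon_n)\setminus\Omega$, the point $p+(0,2\varepsilon_n)$ is not in $\Omega_n$, so $d_H(\Omega_n,\Omega)\le2\varepsilon_n\to0$. Yet $0\in\partial\Omega$, and $CN_0=\{z\in D:\ z_1=0\}$ contains $(0,-\frac12)\in\Omega$, so $\Omega\notin\mathcal{O}_{NC}$. The boundary points of $\Omega_n$ near $0$ lie above $C$, and their cones are thin upward cones that recover only the upper half of $CN_0$.

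What your Case 1 does prove is the corrected statement, in which $\Omega\cap CN_x=\emptyset$ is required only for $x\in\partial\Omega\setminus C$, as in the $C$-SP.
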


\begin{proof}
Let \(\Omega_{n}\) be a sequence of open subsets of \(D\) such that \(\Omega_{n}\in \mathcal{O}_{NC}\) for all \(n\in \mathbb{N}\) and \(\Omega_{n}\xrightarrow{H}\Omega\). Let \(x\in \partial \Omega\), then \(\exists x_{n}\in \partial \Omega_{n}\) for all \(n\in \mathbb{N}\) such that \(x_{n}\longrightarrow x\). And \(CN_{x} = \{y\in D\mid (y - x).(c - x)\leq 0;\forall c\in C\}\).

\[(y - x).(c - x) = ((y - x_n) + (x_n - x)).((c - x_n) + (x_n - x))\]
\[\qquad = (y - x_n).(c - x_n) + (y - x_n).(x_n - x) + (x_n - x).(c - x_n) + \| x_n - x\| ^2\]
\[\qquad \leq (y - x_n).(x_n - x) + (x_n - x).(c - x_n) + \| x_n - x\| ^2\]
\[\qquad \leq \| y - x_n\| \| x_n - x\| + \| x_n - x\| \| c - x_n\| + \| x_n - x\| ^2\]

(because \(y\) and \(c\) are in a bounded ball \(D\)).
By the same method, if \((y - x).(c - x)\leq 0\) then \((y - x_{n}).(c - x_{n})\leq 0\), so \(CN_{x_{n}}\xrightarrow{H}CN_{x}\), then \(\Omega_{n}\cap CN_{x_{n}}\xrightarrow{H}\Omega \cap CN_{x}\).
Then \(\Omega \cap CN_{x} = \emptyset\). We conclude that \(\forall x\in \partial \Omega\);
\(\Omega \cap CN_{x} = \emptyset\), then \(\Omega \in \mathcal{O}_{NC}\).
\(\square\)
\end{proof}

\begin{remark}
Let \(\Omega\) be an open subset of \(D\), and let \(B_{\Omega}\) be the biggest ball in \(\Omega\).
One can prove the following proposition:

Proposition: Suppose that \(\Omega_{n}\xrightarrow{H}\Omega\), such that \(\Omega_{n}\) has the \(\overline{B_{\Omega_{n}}}\)-GNP then \(\Omega\) has the \(\overline{B_{\Omega}}\)-GNP.
\end{remark}

\section{Regularity of the thickness function and its implications for boundary regularity under C-GNP}

Let \(C\subset \mathbb{R}^{N}\) be a compact convex set and \(\Omega\) a domain satisfying the C-GNP with respect to \(C\).
The thickness function \(d:\partial C\to (0,\infty)\) is defined by the relation
\[x = c + d(c)\nu_C(c)\in \partial \Omega ,\]
where \(\nu_{C}(c)\) is the unit outward normal to \(C\) at \(c\).
This note studies the regularity of \(d\) and its relation to the regularity of \(\partial \Omega\).

\subsection{Regularity of the thickness function}

\begin{proposition}
Suppose that \(\partial C\) belongs to the class \(C^k\) with \(k\geq 2\) and that \(\partial \Omega \setminus C\) belongs to the class \(C^{1,\alpha}\) \((0< \alpha \leq 1)\).
Then the thickness function \(d\) belongs to \(C^{1,\alpha}(\partial C)\).
More generally, if \(\partial \Omega \setminus C\) is of class \(C^{m,\beta}\) with \(m\geq 1\), then \(d\in C^{m,\beta}(\partial C)\).
\end{proposition}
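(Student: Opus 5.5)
The approach is to realize $d$ as an implicit function. Locally write $\partial\Omega\setminus C$ as a zero level set, $\partial\Omega=\{\Phi=0\}$ near a point $x_0=c_0+d(c_0)\nu_C(c_0)$, where $\Phi$ is a local defining function of class $C^{1,\alpha}$ (resp. $C^{m,\beta}$) with $\nabla\Phi(x_0)\neq 0$. Such a $\Phi$ can be taken to be the signed distance in a local graph chart, or simply $\Phi(y)=y_N-g(y')$ with $g$ the local graph function. Fix a local $C^k$ parametrization $c=c(s)$, $s\in U\subset\mathbb{R}^{N-1}$, of $\partial C$ near $c_0$. The normal field $\nu_C(c(s))$ is then $C^{k-1}$, hence at least $C^1$ since $k\ge 2$. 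Define
\[
\Psi(s,t)=\Phi\bigl(c(s)+t\,\nu_C(c(s))\bigr),
\]
so that $d$ is characterized locally by $\Psi(s,d(s))=0$. The map $(s,t)\mapsto c(s)+t\nu_C(c(s))$ is $C^{k-1}$, and composing it with $\Phi$ gives $\Psi\in C^{1,\alpha}$, or more generally $\Psi\in C^{\min(m,k-1),\beta}$. One should note that $k-1\ge m$ is implicitly needed for the general statement; I would add the assumption $k\ge m+1$, or interpret the statement under it.

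The key step, and the main obstacle, is the transversality condition $\partial_t\Psi(s_0,d(s_0))=\nabla\Phi(x_0)\cdot\nu_C(c_0)\neq 0$. In words, the outward normal ray from $c_0$ must not be tangent to $\partial\Omega$ at $x_0$. To get this I would use the $C$-GNP in its cone form, via Proposition 2.9 ($C$-GNP $\iff$ $C$-SP) together with Definition 2.23. The inward normal $-n(x_0)$ to $\Omega$ meets $C$ at some point $c_1$, and condition (3) gives that $\Delta_{c_0}\cap\Omega$ is connected, i.e. the ray leaves $\Omega$ exactly once at $x_0$. I would argue by contradiction: if $\nu_C(c_0)\perp n(x_0)$, then the ray through $x_0$ in direction $\nu_C(c_0)$ lies in the tangent hyperplane. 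Since $c_0\in C$, we have $(c_0-x_0)\cdot n(x_0)=0$, while condition (4) forces $C$ to lie on the side $\{(c-x_0)\cdot n(x_0)\le 0\}$ of the tangent plane (inward). So $c_0$ lies on the supporting hyperplane of $K_{x_0}$ at $x_0$. On the other hand, $x_0-c_0$ is parallel to $\nu_C(c_0)$, so $\nu_C(c_0)$ is normal to $C$ at $c_0$, and $n(x_0)$ is normal to $C$ at $c_0$ as well. Since $\partial C$ is $C^1$, the normal at $c_0$ is unique, so $n(x_0)=\pm\nu_C(c_0)$, which contradicts $n(x_0)\perp\nu_C(c_0)$. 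I would check carefully that condition (4) indeed yields $C\subset\{(c-x_0)\cdot n\le0\}$, via the C-SP: $CN_{x_0}\cap\Omega=\emptyset$ combined with $\Omega$ having tangent plane at $x_0$. If this fails, I would fall back on assuming strict transversality as part of the regularity hypothesis.

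Once transversality holds, the implicit function theorem in Hölder classes applies. Its $C^1$ version gives a $C^1$ solution $t=d(s)$ near $s_0$, with $\nabla d=-\partial_s\Psi/\partial_t\Psi$. The right-hand side is a composition of $C^{0,\alpha}$ functions with the $C^1$ map $s\mapsto(s,d(s))$, divided by a nonvanishing $C^{0,\alpha}$ function, so $\nabla d\in C^{0,\alpha}$. Bootstrapping this formula inductively gives $d\in C^{m,\beta}$ in the general case. Uniqueness of the exit point along the ray, from condition (3), ensures that the local implicit solution coincides with the globally defined $d$. Finally, covering the compact set $\partial C$ by finitely many charts gives $d\in C^{1,\alpha}(\partial C)$ (resp. $C^{m,\beta}(\partial C)$).
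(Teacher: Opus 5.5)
\textbf{Main gap.} The step that fails is your claim that $\Psi(s,t)=\Phi\bigl(F(s,t)\bigr)$, with $F(s,t)=c(s)+t\,\nu_C(c(s))$, belongs to $C^{\min(m,k-1),\beta}$. A H\"older modulus on the top derivatives survives composition, in general, only if the inner map is itself $C^{m,\beta}$. Here
\[
\partial_s\Psi=\nabla\Phi\bigl(F(s,t)\bigr)\cdot\bigl(\partial_s c+t\,\partial_s[\nu_C\circ c]\bigr),
\]
and for $k=2$ the factor $\partial_s[\nu_C\circ c]$ is merely continuous. So $\nabla d=-\partial_s\Psi/\partial_t\Psi$ is only $C^0$, and you get $d\in C^1$, not $C^{1,\alpha}$.

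This cannot be patched, because the statement itself is false for $k=2$. Take $N=2$, $0\in\operatorname{int}C$, $\partial C$ of class $C^2$, and $\Omega=B(0,R)\supset C$. This $\Omega$ has the $C$-GNP, since every inward normal ray passes through $0$. Use arclength $s$, and set $\nu_C'=\kappa\tau$, $h=c\cdot\nu_C$ and $\rho=\sqrt{h^2+R^2-|c|^2}>h$. Then $d=\rho-h$ and
\[
d'(s)=(c\cdot\tau)\left[\kappa(s)\left(\frac{h}{\rho}-1\right)-\frac{1}{\rho}\right],
\]
where all coefficients are $C^1$ and $h/\rho-1<0$. Near a point with $c\cdot\tau\neq0$, $d'$ is H\"older only if $\kappa$ is. Choose $\kappa$ continuous but not H\"older there: $\partial\Omega$ is analytic, yet $d\notin C^{1,\alpha}(\partial C)$ for every $\alpha$.

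For the same reason, your proposed extra assumption $k\ge m+1$ is not enough; the case $k=2$, $m=1$ is exactly this example. What is needed is $\nu_C\in C^{m,\beta}$, i.e.\ $\partial C\in C^{m+1,\beta}$ (for instance $k\ge m+2$). Under that hypothesis your implicit-function argument goes through essentially as written. The paper's own proof just asserts that $c\mapsto c+d(c)\nu_C(c)$ is a $C^{1,\alpha}$ diffeomorphism and reads off $d$. That is circular on this point and misses the same loss of regularity.

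\textbf{Transversality.} Your argument rests on the claim that condition (4), or the $C$-SP, puts all of $C$ in $\{(c-x_0)\cdot n(x_0)\le 0\}$. That is false in general, since (4) only places one point of $C$ on the inward normal ray. In the triangle example of Section 2 with infinite perimeter, the line containing a side of a tooth passes through a point of $C$, and $C$ lies on both sides of it.

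Transversality is nevertheless true, by a direct argument. Suppose $n(x_0)\cdot\nu_C(c_0)=0$. Then every point $y=x_0-s\,n(x_0)$, $s\ge0$, of the inward normal ray satisfies $(y-c_0)\cdot\nu_C(c_0)=d(c_0)>0$. Hence the ray misses the supporting half-space $\{y:(y-c_0)\cdot\nu_C(c_0)\le0\}\supset C$, which contradicts (4).
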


\begin{proof}
The map \(\Phi :\partial C\to \partial \Omega \backslash C\) given by \(\Phi (c) = c + d(c)\nu_C(c)\) is a diffeomorphism.
Since \(\nu_{C}\) is of class \(C^{k - 1}\) (by the regularity of \(\partial C\)), the regularity of \(d\) is inherited from that of \(\Phi\).
In particular, if \(\partial \Omega\) is \(C^{1,\alpha}\), then \(\Phi\) is a \(C^{1,\alpha}\) diffeomorphism, hence \(d\) is \(C^{1,\alpha}\).
Higher regularity follows similarly.
\end{proof}

Conversely, we have:

\begin{proposition}
Suppose that \(\partial C\) is of class \(C^{k}\) (\(k \geq 2\)) and that the thickness function \(d\) belongs to \(C^{m,\beta}(\partial C)\) with \(m \geq 1\).
Then \(\partial \Omega \backslash C\) is of class \(C^{\min (m,k - 1),\beta}\).
\end{proposition}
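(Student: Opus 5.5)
We view \(\partial \Omega \setminus C\) as the image of the map \(\Phi:\partial C\to\mathbb{R}^N\), \(\Phi(c)=c+d(c)\nu_C(c)\), already used in the proof of Proposition 3.1. The plan is to show two things: first, that \(\Phi\) is an immersion with the stated regularity; second, that it is a homeomorphism onto \(\partial\Omega\setminus C\). Together these make \(\partial\Omega\setminus C\) an embedded hypersurface of the claimed class.

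\textbf{Regularity of \(\Phi\).} Work in a local \(C^k\) chart \(\sigma:U\subset\mathbb{R}^{N-1}\to\partial C\). The inclusion \(c\mapsto c\) is \(C^k\) and \(\nu_C\circ\sigma\) is \(C^{k-1}\). By hypothesis \(d\circ\sigma\in C^{m,\beta}\). Products and sums of a \(C^{m,\beta}\) function with a \(C^{k-1}\) function are \(C^{\min(m,k-1),\beta}\) on compact subsets. When \(k-1\le m\), this uses that \(C^{k-1}\) functions on \(\partial C\) are \(C^{k-2,1}\subset C^{k-2,\beta}\). Strictly speaking, with \(k-1<m\) one obtains only \(C^{k-1}\), or \(C^{k-1,\beta}\) if \(\nu_C\) is assumed \(C^{k-1,\beta}\). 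We would state this caveat and interpret \(\partial C\in C^k\) as giving \(\nu_C\in C^{k-1,\beta}\), which is the reading under which the statement holds. Hence \(\Phi\circ\sigma\in C^{\min(m,k-1),\beta}\).

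\textbf{\(\Phi\) is an immersion.} For a tangent vector \(v\in T_c\partial C\), differentiation gives
\[
D\Phi(c)v = v + d(c)\,W_c v + \bigl(\nabla d(c)\cdot v\bigr)\nu_C(c),
\]
where \(W_c=D\nu_C(c)\) is the Weingarten map. Since \(C\) is convex, \(W_c\) is symmetric and positive semidefinite on \(T_c\partial C\). Because \(d>0\), the tangential part \((I+d(c)W_c)v\) has norm at least \(|v|\). So \(D\Phi(c)\) is injective, and it remains so after adding the normal component.

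\textbf{Homeomorphism onto \(\partial\Omega\setminus C\).} Injectivity of \(\Phi\) follows from convexity. If \(c+t\nu_C(c)=c'+t'\nu_C(c')\) with \(t,t'>0\), then the metric projection onto \(C\) returns both \(c\) and \(c'\), so \(c=c'\) and \(t=t'\). In fact \(\Phi^{-1}\) is the restriction of the projection \(p_C\), which is Lipschitz. Surjectivity comes from the definition of \(d\) together with the \(C\)-GNP. Take \(x\in\partial\Omega\setminus C\) and set \(c=p_C(x)\). Condition (3) says the ray \(\Delta_c\) meets \(\Omega\) in a connected set, i.e.\ a segment, whose endpoint is \(\Phi(c)\). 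By the star-shapedness along normal rays (the \(C\)-SP, Proposition 2.10), \(x\) must be that endpoint.

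\textbf{Conclusion and main obstacle.} \(\Phi\) is therefore a \(C^{\min(m,k-1),\beta}\) injective immersion with continuous inverse from the compact manifold \(\partial C\). The implicit function theorem in Hölder classes then makes \(\partial\Omega\setminus C\) locally a \(C^{\min(m,k-1),\beta}\) graph. The step we expect to be the main obstacle is surjectivity, i.e.\ that every boundary point lies on \(\Phi(\partial C)\). This is where the \(C\)-GNP, through Proposition 2.10, must be used carefully.
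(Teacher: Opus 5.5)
Your route is the paper's: you parametrize \(\partial\Omega\setminus C\) by \(\Phi(c)=c+d(c)\nu_C(c)\) and read off its regularity as the minimum of those of \(d\) and \(\nu_C\). The paper's proof consists only of this count. It takes for granted, as at the start of Section 3.2, that \(\Phi\) maps \(\partial C\) bijectively onto \(\partial\Omega\setminus C\). Your immersion bound \(|D\Phi(c)v|\ge|(I+d(c)W_c)v|\ge|v|\), your injectivity argument via the metric projection, and your surjectivity argument via the \(C\)-SP are correct and fill exactly that omission. To close your surjectivity sketch, suppose \(x=c+t\nu_C(c)\) with \(t>d(c)\). Then \((x-\Phi(c))\cdot(c'-\Phi(c))\le-(t-d(c))\,d(c)<0\) for every \(c'\in C\), so a whole neighbourhood of \(x\) lies in \(CN_{\Phi(c)}\), which misses \(\Omega\). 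Hence \(x\notin\overline{\Omega}\).

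What needs correcting is the range of your caveat. You invoke \(C^{k-1}\subset C^{k-2,1}\subset C^{k-2,\beta}\) for the case \(k-1\le m\), but that inclusion only puts \(d\,\nu_C\) in \(C^{m,\beta}\) when \(m\le k-2\); it gives nothing at \(m=k-1\). For every \(m\ge k-1\), including \(m=k-1\), the product is only \(C^{k-1}\), and the proposition as literally stated is false there.

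Here is a counterexample in dimension 2. Use arclength \(s\) on \(\partial C\), curvature \(\kappa\) and unit tangent \(\tau\). Then \(\Phi'(s)=(1+d\kappa)\tau+d'\nu_C\), so the tangent of \(\Phi(\partial C)\) is turned from that of \(\partial C\) by the angle \(\arctan\bigl(d'/(1+d\kappa)\bigr)\). Take \(k=2\), let \(\kappa\) be continuous but not \(\beta\)-H\"older near some point, and let \(d\) be smooth with \(d'\neq0\) there. Choose \(|d'|\) small so that the \(C\)-GNP still holds. Then this angle is not \(\beta\)-H\"older, so \(\partial\Omega\setminus C\) is \(C^1\) but not \(C^{1,\beta}\), even though \(m=k-1=1\).

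So the extra hypothesis \(\nu_C\in C^{k-1,\beta}\), i.e.\ \(\partial C\in C^{k,\beta}\), is needed for all \(m\ge k-1\), not only for \(m>k-1\). With it your argument is complete; the paper's one-line proof tacitly makes the same assumption.
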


\begin{proof}
The parameterization of \(\partial \Omega \backslash C\) is given by \(\phi (c) = c + d(c)\nu_C(c)\).
Since \(\nu_{C}\) is \(C^{k - 1}\), the regularity of \(\phi\) is the minimum of the regularities of \(d\) and \(\nu_{C}\).
Thus, \(\partial \Omega\) is \(C^{\min (m,k - 1),\beta}\). Note that if \(m \leq k - 1\), then the regularity of \(\partial \Omega\) is exactly that of \(d\);
otherwise, it is limited by the regularity of the normal field of \(C\).
\end{proof}

\subsection{Existence of a Bilipschitz Map between \(\partial\Omega \setminus C\) and \(\partial C\) under C-GNP}

Under the C-GNP, every point \(x \in \partial\Omega \setminus C\) can be uniquely written as
\[
x = c + d(c) \, \nu_C(c),
\]
where \(c \in \partial C\), \(d(c) > 0\), and \(\nu_C(c)\) is the outward unit normal to \(C\) at \(c\).
This defines a bijection
\[
\Phi: \partial C \longrightarrow \partial\Omega \setminus C, \quad \Phi(c) = c + d(c) \nu_C(c).
\]
Its inverse is the projection \(p: \partial\Omega \setminus C \to \partial C\) that sends \(x\) to the unique \(c\) such that \(x\) lies on the outward normal ray from \(c\).

\subsubsection{Regularity of \(\Phi\)}
We assume that \(\partial C\) is of class \(C^{1,1}\), i.e., the normal \(\nu_C\) is Lipschitz continuous.
This holds, for example, if \(C\) is a convex set with a bound on its principal curvatures.
We also assume that the thickness function \(d: \partial C \to (0,\infty)\) is Lipschitz, which is a consequence of the local Lipschitz regularity of \(\partial\Omega \setminus C\) required by the C-GNP.

\begin{proposition}
Under the above assumptions, \(\Phi\) is a Lipschitz map.
\end{proposition}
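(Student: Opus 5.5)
The plan is a direct estimate of \(\Phi(c_1)-\Phi(c_2)\) for \(c_1,c_2\in\partial C\), splitting it into three pieces controlled respectively by the identity, the Lipschitz continuity of \(d\), and the Lipschitz continuity of \(\nu_C\). Write \(L_d\) for the Lipschitz constant of \(d\) and \(L_\nu\) for that of \(\nu_C\), the latter finite because \(\partial C\) is \(C^{1,1}\). Then write
\[
\Phi(c_1)-\Phi(c_2) = (c_1-c_2) + \bigl(d(c_1)-d(c_2)\bigr)\nu_C(c_1) + d(c_2)\bigl(\nu_C(c_1)-\nu_C(c_2)\bigr).
\]
The first term is bounded by \(|c_1-c_2|\). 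The second is bounded by \(L_d|c_1-c_2|\), since \(|\nu_C|=1\). The third is bounded by \(\|d\|_\infty L_\nu|c_1-c_2|\).

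The only point needing justification is that \(d\) is bounded, i.e. \(\|d\|_\infty<\infty\). This follows from \(\Omega\subset D\) with \(D\) a bounded ball. Indeed \(\Phi(c)\in\partial\Omega\subset\bar D\), so \(d(c)=|\Phi(c)-c|\le \operatorname{diam}(D)\), because \(c\in C\subset D\). Alternatively, \(d\) is continuous on the compact set \(\partial C\). This yields
\[
|\Phi(c_1)-\Phi(c_2)|\le \bigl(1+L_d+\operatorname{diam}(D)\,L_\nu\bigr)|c_1-c_2|.
\]

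A subtlety concerns the distance used on \(\partial C\). The Lipschitz hypotheses on \(d\) and \(\nu_C\) might be phrased with respect to the intrinsic (geodesic) distance on \(\partial C\) rather than the Euclidean distance of \(\mathbb{R}^N\). For a compact convex set with \(C^{1,1}\) boundary these two distances are comparable, since locally \(\partial C\) is the graph of a \(C^{1,1}\) function over its tangent plane. So the same estimate holds in either metric, with the constant changed by a uniform factor. I expect this metric-equivalence remark, together with checking that the assumed Lipschitz bound on \(d\) is global on \(\partial C\) rather than merely local, to be the only genuine obstacle.

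If only local Lipschitz bounds were available, I would cover the compact \(\partial C\) by finitely many charts. On pairs \(c_1,c_2\) that are close, in the sense of lying within a Lebesgue number of the cover, the local constants apply. For pairs with \(|c_1-c_2|\ge\eta\), the crude bound \(|\Phi(c_1)-\Phi(c_2)|\le \operatorname{diam}(D)\le (\operatorname{diam}(D)/\eta)|c_1-c_2|\) suffices. This gives a global Lipschitz constant.
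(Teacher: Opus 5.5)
Your proof is correct and is essentially the paper's argument. It uses the same three-term splitting of $\Phi(c_1)-\Phi(c_2)$ into the identity part, the variation of $d$, and the variation of $\nu_C$, which gives the constant $1+K+M L_\nu$ with $M=\sup d$; your justification that $M\le\operatorname{diam}(D)$, and your remarks on intrinsic versus Euclidean distance and on local versus global Lipschitz bounds, are sound refinements of points the paper takes for granted.
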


\begin{proof}
Let \(c_1, c_2 \in \partial C\). Then
\begin{align*}
|\Phi(c_1) - \Phi(c_2)|
&\leq |c_1 - c_2| + |d(c_1)\nu_C(c_1) - d(c_2)\nu_C(c_2)| \\
&\leq |c_1 - c_2| + |d(c_1) - d(c_2)| + |d(c_2)|\, |\nu_C(c_1) - \nu_C(c_2)| \\
&\leq (1 + K + M L_\nu) |c_1 - c_2|,
\end{align*}
where \(K\) is the Lipschitz constant of \(d\), \(M = \sup d\), and \(L_\nu\) is the Lipschitz constant of \(\nu_C\).
Hence \(\Phi\) is Lipschitz.
\end{proof}

\subsubsection{Regularity of the Inverse Map}
The inverse map \(p = \Phi^{-1}\) is the projection along normals.
To show that it is Lipschitz, we need a lower bound of the form
\[
|\Phi(c_1) - \Phi(c_2)| \geq \lambda |c_1 - c_2|
\]
for some \(\lambda > 0\). However, such a bound does not hold unconditionally;
it depends on the size of \(d\) and the curvature of \(\partial C\).

\begin{theorem}
Suppose that \(\partial C\) is \(C^{1,1}\) with Lipschitz constant \(L_\nu\) for \(\nu_C\), and that \(d\) is Lipschitz with constant \(K\) and maximum \(M\).
If
\[
K + M L_\nu < 1,
\]
then \(\Phi\) is bilipschitz.
\end{theorem}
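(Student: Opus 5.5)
The plan is a reverse triangle inequality applied to the same decomposition used for the upper Lipschitz bound. The preceding proposition already shows that \(\Phi\) is Lipschitz with constant \(1+K+ML_\nu\). So it remains to prove a lower bound
\[
|\Phi(c_1)-\Phi(c_2)|\ \geq\ \lambda\,|c_1-c_2|,\qquad \lambda := 1-(K+ML_\nu)>0,
\]
for all \(c_1,c_2\in\partial C\). Together with bijectivity of \(\Phi\) onto \(\partial\Omega\setminus C\) (assumed from the C-GNP), this bound makes the projection \(p=\Phi^{-1}\) Lipschitz with constant \(1/\lambda\), and hence \(\Phi\) bilipschitz.

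First, for \(c_1,c_2\in\partial C\) I will write
\[
\Phi(c_1)-\Phi(c_2)=(c_1-c_2)+\bigl(d(c_1)\nu_C(c_1)-d(c_2)\nu_C(c_2)\bigr).
\]
The reverse triangle inequality gives \(|\Phi(c_1)-\Phi(c_2)|\geq |c_1-c_2|-|d(c_1)\nu_C(c_1)-d(c_2)\nu_C(c_2)|\).

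Next, I split the second term exactly as in the proof of the previous proposition:
\[
d(c_1)\nu_C(c_1)-d(c_2)\nu_C(c_2)=(d(c_1)-d(c_2))\nu_C(c_1)+d(c_2)\bigl(\nu_C(c_1)-\nu_C(c_2)\bigr).
\]
Since \(|\nu_C|=1\) and \(0<d\leq M\), this is bounded by \(K|c_1-c_2|+ML_\nu|c_1-c_2|\). Combining the two steps gives the lower bound with \(\lambda=1-K-ML_\nu\), which is positive precisely under the hypothesis \(K+ML_\nu<1\). For \(x_i=\Phi(c_i)\) this reads \(|p(x_1)-p(x_2)|\leq \lambda^{-1}|x_1-x_2|\).

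The computation itself is routine. The point needing care is that "Lipschitz with constant \(K\)" for \(d\) and "constant \(L_\nu\)" for \(\nu_C\) must both be measured with the Euclidean distance \(|c_1-c_2|\) in \(\mathbb{R}^N\), not the intrinsic geodesic distance on \(\partial C\); otherwise the constants would not combine directly. I will fix this convention at the start. If the hypotheses are instead meant intrinsically, I would use that for \(C^{1,1}\) convex boundaries the geodesic and chordal distances are locally comparable, at the cost of adjusting the constants.

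I will also note that the hypothesis is sufficient but crude, since the normal component of \(\Phi(c_1)-\Phi(c_2)\) is not used. Finally, I will check that the lower bound also re-proves injectivity of \(\Phi\), so the argument does not depend on the uniqueness claim from the C-GNP beyond surjectivity onto \(\partial\Omega\setminus C\).
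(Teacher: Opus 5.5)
Your proposal is correct and is essentially the paper's proof: you apply the reverse triangle inequality to \(\Phi(c_1)-\Phi(c_2)=(c_1-c_2)+\bigl(d(c_1)\nu_C(c_1)-d(c_2)\nu_C(c_2)\bigr)\), bound the second term by \((K+ML_\nu)|c_1-c_2|\) exactly as in the preceding Lipschitz estimate, and combine the result with that upper bound. As you suspected, the hypothesis is crude, and in fact unnecessary: \(\Phi(c)-c\) lies in the normal cone of \(C\) at \(c\), so \(p=\Phi^{-1}\) is the restriction of the metric projection onto the convex set \(C\), which is \(1\)-Lipschitz, and therefore \(|\Phi(c_1)-\Phi(c_2)|\geq |c_1-c_2|\) holds unconditionally.
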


\begin{proof}
For any \(c_1, c_2 \in \partial C\), we have
\[
|\Phi(c_1) - \Phi(c_2)| \geq |c_1 - c_2| - |d(c_1)\nu_C(c_1) - d(c_2)\nu_C(c_2)|.
\]
Using the same estimate as above,
\[
|d(c_1)\nu_C(c_1) - d(c_2)\nu_C(c_2)| \leq (K + M L_\nu) |c_1 - c_2|.
\]
Hence
\[
|\Phi(c_1) - \Phi(c_2)| \geq \bigl(1 - (K + M L_\nu)\bigr) |c_1 - c_2|.
\]
The condition \(K + M L_\nu < 1\) guarantees that the factor is positive, so \(\Phi\) is bi-Lipschitz.
\end{proof}

\subsubsection{Counterexample without the Condition}
If the condition \(K + M L_\nu < 1\) fails, the inverse map may fail to be Lipschitz.
Consider, for example, a convex set \(C\) with high curvature (large \(L_\nu\)) and a thickness function \(d\) that is not small enough.
Near a point where the curvature is large, small movements along \(\partial C\) can produce large displacements in the normal direction, and if \(d\) is also large, the distance between two points on \(\partial\Omega\) can be much smaller than the distance between their projections on \(\partial C\).
This violates the Lipschitz property of the inverse.

\subsection{Implications for shape regularity}

These results show a trade-off between the regularity of \(C\) and that of \(d\) in determining the regularity of \(\partial \Omega\).

\begin{remark}
If \(C\) is simply Lipschitz (e.g., a convex polygon), then the normal \(\nu_{C}\) is defined only almost everywhere and is not continuous.
In this case, even if \(d\) is smooth, the boundary \(\partial \Omega\) will generally only be Lipschitz, with possible corners along the normals at singular points of \(\partial C\).
\end{remark}

\textbf{Conclusion on bilipschitz maps:} Under the C-GNP, there is a natural bijection \(\Phi\) between \(\partial C\) and \(\partial\Omega \setminus C\).
This map is always Lipschitz provided \(\partial C\) is \(C^{1,1}\) and \(d\) is Lipschitz.
However, the inverse map is Lipschitz only under an additional condition relating the maximum thickness, the Lipschitz constant of \(d\), and the curvature of \(\partial C\).
Without such a condition, one cannot guarantee the existence of a bilipschitz map.
In applications where \(\Omega\) is close to \(C\) (so that \(d\) is small) or when the curvature of \(C\) is moderate, the condition is often satisfied, and \(\Phi\) is bilipschitz.

\begin{theorem}
Let \(C\) be a compact convex set of class \(C^{k,\alpha}\) (\(k \geq 2\), \(0 < \alpha < 1\)) and let \(\Omega\) satisfy the \(C\)-GNP.
If the thickness function \(d\) belongs to \(C^{k,\alpha}(\partial C)\), then \(\partial \Omega \backslash C\) belongs to the class \(C^{k,\alpha}\).
Conversely, if \(\partial \Omega \backslash C\) is \(C^{k,\alpha}\), then \(d \in C^{k,\alpha}(\partial C)\).
\end{theorem}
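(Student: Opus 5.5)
The plan is to avoid the normal-ray parametrization $\phi(c)=c+d(c)\nu_C(c)$ of Propositions 3.1 and 3.2. That map costs one derivative, because $\nu_C$ is only $C^{k-1,\alpha}$ when $\partial C\in C^{k,\alpha}$. Instead I would describe $\partial\Omega\setminus C$ implicitly through the distance function to $C$.

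\textbf{Tools.} Let $\delta_C(x)=\operatorname{dist}(x,C)$ and let $p_C$ be the metric projection onto $C$. Since $C$ is convex, $p_C$ is single valued on $\mathbb{R}^N\setminus C$, and $x=p_C(x)+\delta_C(x)\,\nu_C(p_C(x))$. By the classical regularity of the distance function (Gilbarg--Trudinger type, adapted to the exterior of a convex body, where there is no focal set), $\delta_C\in C^{k,\alpha}(\mathbb{R}^N\setminus C)$ with $\nabla\delta_C=\nu_C\circ p_C$, and $p_C\in C^{k-1,\alpha}$. The C-GNP and the uniqueness of the representation $x=c+d(c)\nu_C(c)$ give
\[
\partial\Omega\setminus C=\{x\notin C:\ F(x)=0\},\qquad F(x)=\delta_C(x)-d(p_C(x)).
\]

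\textbf{Forward direction.} First I would check that $\nabla F\neq 0$ on this set. Writing $c=p_C(x)$ and $t=\delta_C(x)$, one has $\nabla F=\nu_C(c)-Dp_C(x)^{T}\nabla_\tau d(c)$. The two terms are orthogonal (the first is normal to $\partial C$ at $c$, the second tangent), so $|\nabla F|\geq 1$. The implicit function theorem then makes $\partial\Omega\setminus C$ a hypersurface with the regularity of $F$. The first term $\delta_C$ is already $C^{k,\alpha}$, so everything rests on the regularity of $d\circ p_C$.

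\textbf{Main obstacle.} A plain composition only gives $d\circ p_C\in C^{k-1,\alpha}$. To recover the last derivative, I would use the product structure $\mathbb{R}^N\setminus C\simeq\partial C\times(0,\infty)$ given by $(c,t)\mapsto c+t\nu_C(c)$. In these coordinates $d\circ p_C$ is just $d(c)$, independent of $t$, and $\partial\Omega$ is the graph $t=d(c)$ over the $C^{k,\alpha}$ manifold $\partial C$. The derivative loss lies entirely in the change of coordinates, not in the graph function. I would first try to show that the level set $\{t=d(c)\}$ inherits $C^{k,\alpha}$ as follows. Its unit normal is
\[
n=\frac{\nu_C(c)-(I-tW_c)^{-1}\nabla_\tau d(c)}{\bigl|\nu_C(c)-(I-tW_c)^{-1}\nabla_\tau d(c)\bigr|},\qquad W_c \text{ the Weingarten map of } \partial C \text{ at } c.
\]
I would then use the identity $\nu_C\circ p_C=\nabla\delta_C$, which is $C^{k-1,\alpha}$ \emph{as a function of} $x$, to bootstrap: $n\in C^{k-1,\alpha}$ along the surface. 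A hypersurface whose unit normal is $C^{k-1,\alpha}$ is $C^{k,\alpha}$. The delicate point is the Weingarten term $W_c$, which is only $C^{k-2,\alpha}$. I expect this to be the crux, and I would try to handle it through the relation $W_c=D^2\delta_C$ restricted to $\partial C$ together with the tangential identity $D(\nu_C\circ p_C)=D^2\delta_C$. If the bootstrap fails in full generality, the fallback is to read "$d\in C^{k,\alpha}(\partial C)$" as regularity of $d\circ p_C$ near $\partial\Omega$, in which case the implicit function argument above closes directly.

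\textbf{Converse.} Assume $\partial\Omega\setminus C\in C^{k,\alpha}$. Then $\delta_C|_{\partial\Omega}\in C^{k,\alpha}$ and $d=\delta_C\circ(p_C|_{\partial\Omega})^{-1}$. By the C-GNP, $p_C|_{\partial\Omega}$ is a bijection onto $\partial C$ (Section 3.2), and it is a local diffeomorphism by the transversality $|\nabla F|\geq 1$. The derivative count for $(p_C|_{\partial\Omega})^{-1}$ runs into the same one-derivative issue as the forward direction, and I would handle it by the same normal-coordinate bootstrap.
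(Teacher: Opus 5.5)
The step that fails is the ``bootstrap'' meant to recover the lost derivative. The converse fails with it, since you defer it to the same device. This cannot be carried out: the loss you detected is genuine, and the statement is false as written, in both directions.

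Take $N=2$, parametrize $\partial C$ by arc length, and write $d,\nu_C$ for $d\circ\gamma,\nu_C\circ\gamma$. Then $\gamma'=T$ and $\nu_C'=\kappa T$, where $\kappa\geq 0$ is in general only $C^{k-2,\alpha}$. For $\phi=\gamma+d\,\nu_C$ one gets $\phi'=(1+d\kappa)T+d'\nu_C$. So the angle $\beta(s)$ between $T(s)$ and the tangent to $\partial\Omega$ at $\phi(s)$ satisfies
\[
\tan\beta=\frac{d'}{1+d\kappa},\qquad 1+d\kappa\geq 1 .
\]
In both directions $T$ and $\phi$ are $C^{k-1,\alpha}$ in $s$ (in the converse, by the inverse function theorem for $p_C|_{\partial\Omega}$). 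Hence $\partial\Omega\setminus C\in C^{k,\alpha}$ forces $\beta\in C^{k-1,\alpha}$. If in addition $d\in C^{k,\alpha}$, then $d\,\kappa\tan\beta=d'-\tan\beta\in C^{k-1,\alpha}$. Therefore $\kappa\in C^{k-1,\alpha}$, i.e.\ $\partial C$ is $C^{k+1,\alpha}$, on every arc where $d'\neq 0$ (equivalently $\beta\neq 0$).

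Two concrete counterexamples follow, one for each direction.
\begin{enumerate}
\item Let $C$ be a $C^{k,\alpha}$ perturbation of a disk whose curvature is not $C^{k-1,\alpha}$ near some $s_0$. A small smooth $d$ with $d'(s_0)\neq 0$ then produces a domain with the $C$-GNP (its inward normals stay close to $-\nu_C$) whose boundary is not $C^{k,\alpha}$.
\item Let $\Omega=B(0,R)\supset C\ni 0$, with the irregular point of $\kappa$ placed where the normal ray of $C$ meets $\partial B(0,R)$ obliquely. This $\Omega$ has the $C$-GNP and an analytic boundary, while $d\notin C^{k,\alpha}$.
\end{enumerate}

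The identity $\nu_C\circ p_C=\nabla\delta_C$ cannot rescue this, because it only controls the first term of $\nabla F$. The tangential term is $(I+tW_c)^{-1}\nabla_\tau d$ (with $W_c=D\nu_C\geq 0$), and it genuinely depends on $W_c\in C^{k-2,\alpha}$ as soon as $t\,\nabla_\tau d\neq 0$. Your fallback replaces the hypothesis by the strictly stronger $d\circ p_C\in C^{k,\alpha}$ near $\partial\Omega$, so it proves a different statement.

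The paper's proof does not supply the missing idea either. It simply asserts that $\phi=c+d\,\nu_C$ is a composition of $C^{k,\alpha}$ maps, overlooking exactly what you flagged, namely $\nu_C\in C^{k-1,\alpha}$. The paper's preceding proposition itself only yields $C^{\min(m,k-1),\beta}$.

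Your implicit-function framework does prove the correct versions:
\begin{enumerate}
\item If $\partial C\in C^{k+1,\alpha}$, then $\delta_C\in C^{k+1,\alpha}$, $p_C\in C^{k,\alpha}$ and $F\in C^{k,\alpha}$ with $|\nabla F|\geq 1$. The implicit function theorem gives $\partial\Omega\setminus C\in C^{k,\alpha}$. Conversely, $d=\delta_C\circ(p_C|_{\partial\Omega})^{-1}\in C^{k,\alpha}$ by the inverse function theorem.
\item With only $\partial C\in C^{k,\alpha}$, the same argument gives $C^{k-1,\alpha}$ in both directions, and the identity above shows this cannot be improved.
\end{enumerate}

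One repair is needed in the converse: take transversality from condition (4) rather than from $|\nabla F|\geq 1$, which presupposes that $d$ is differentiable. Since $C\subset\{y:(y-c)\cdot\nu_C(c)\leq 0\}$ while $(x-c)\cdot\nu_C(c)=d(c)>0$ for $x=c+d(c)\nu_C(c)$, the inward normal $n$ at $x$ must satisfy $n\cdot\nu_C(c)<0$. Hence $\nu_C(c)\notin T_x\partial\Omega$, and $Dp_C(x)$ is injective on $T_x\partial\Omega$.
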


\begin{proof}
The direct part follows from the fact that \(\phi (c) = c + d(c)\nu_C(c)\) is a composition of \(C^{k,\alpha}\) functions.
For the converse, note that the projection \(p: \partial \Omega \to \partial C\) is given by \(p(x) = c\) where \(x = c + d(c)\nu_C(c)\).
Since \(p\) is the inverse of \(\phi\) and \(\phi\) is a \(C^{k,\alpha}\) diffeomorphism, \(p\) is also \(C^{k,\alpha}\).
Then \(d(c) = |x - c|\) is a \(C^{k,\alpha}\) function because it is the composition of the \(C^{k,\alpha}\) map \(c \mapsto x = \phi (c)\) with the Euclidean distance.
\end{proof}

\subsection{Regularity under Shape Optimization}

Consider a shape optimization problem on the C-GNP class where the functional depends on boundary regularity (e.g., perimeter minimization with volume constraint).
The above results allow us to translate regularity assumptions on the optimal shape into regularity conditions on the thickness function.
For example, if an optimal \(\Omega^{*}\) is known to have a \(C^{2,\alpha}\) boundary (outside \(C\)), then the corresponding thickness function \(d^{*}\) is \(C^{2,\alpha}\).
Conversely, if we look for a solution in the class of domains with \(C^{2,\alpha}\) boundary, we can restrict ourselves to thickness functions in \(C^{2,\alpha}(\partial C)\) provided that \(C\) itself is at least \(C^{3,\alpha}\).

\subsection{Example: Constant Thickness}
If \(d \equiv\) constant, then \(\partial \Omega\) is a parallel surface to \(\partial C\).
In this case, the regularity of \(\partial \Omega\) is exactly that of \(\partial C\) (since the normal shift preserves regularity).
This is consistent with the propositions above, as a constant function is \(C^{\infty}\).

\subsection{Bilipschitz Applications and Preservation of the C-GNP Property}

Bilipschitz maps play an important role in the study of geometric regularity of domains because they preserve the Lipschitz structure of boundaries and uniform cone properties.
In what follows, we examine their interaction with the \(C\)-GNP property.

\begin{definition}[Bilipschitz map]
A map \(\Phi : \mathbb{R}^N \to \mathbb{R}^N\) is called \textbf{bilipschitz} if there exist constants \(L_1, L_2 > 0\) such that for all \(x, y \in \mathbb{R}^N\),
\[
L_1 \|x - y\| \leq \|\Phi(x) - \Phi(y)\| \leq L_2 \|x - y\|.
\]
Such a map is a homeomorphism that preserves the class of Lipschitz sets.
\end{definition}

\begin{proposition}[Preservation of the \(C\)-GNP under bilipschitz maps]
Let \(\Omega\) be a domain satisfying the \(C\)-GNP with respect to a compact convex set \(C\).
Let \(\Phi : \mathbb{R}^N \to \mathbb{R}^N\) is a bilipschitz map such that \(\Phi(C)\) is still convex.
Then \(\Phi(\Omega)\) satisfies the \(\Phi(C)\)-GNP.
\end{proposition}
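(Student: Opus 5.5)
The plan is to verify the four conditions of Definition 2.1 for the pair \((\Phi(\Omega),\Phi(C))\) one at a time. Conditions (1)--(3) are topological or metric and should follow from the fact that \(\Phi\) is a bilipschitz homeomorphism. For the normal condition (4) I would not work with normals directly but pass through the normal-free characterization of Proposition 2.10, i.e. the \(C\)-SP.

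\textbf{Step 1: conditions (1)--(3).} For (1), a homeomorphism maps interiors to interiors and preserves inclusions. Hence \(\operatorname{int}(\Phi(C))=\Phi(\operatorname{int}(C))\subset\Phi(\Omega)\). In the same way \(\partial\Phi(\Omega)=\Phi(\partial\Omega)\), so
\[\partial\Phi(\Omega)\setminus\Phi(C)=\Phi(\partial\Omega\setminus C).\]
For (2), I would use the equivalence between local Lipschitz regularity of a boundary and a uniform local cone condition (as in [2]). A bilipschitz map sends a truncated cone of height \(h\) and aperture \(\theta\) into a set that contains a truncated cone. The height and aperture of the new cone are controlled by \(L_1,L_2,h,\theta\). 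So the image boundary satisfies a local cone condition and is therefore locally Lipschitz. For (3), connectedness is preserved by homeomorphisms. For each \(c'=\Phi(c)\in\partial\Phi(C)\) I would take the outward normal ray of the convex set \(\Phi(C)\) at \(c'\). Its intersection with \(\Phi(\Omega)\) should be connected, because \(\Phi(\Omega)\) is star-shaped with respect to \(\Phi(C)\) in the sense of Step 2 below. So I would postpone (3) until after Step 2.

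\textbf{Step 2: condition (4) via the \(C\)-SP.} By Proposition 2.10, \(\Omega\) has the \(C\)-SP. That is, for every \(x\in\partial\Omega\setminus C\) we have \(CN_x\cap\Omega=\emptyset\), where \(CN_x\) is the normal cone of \(K_x=\operatorname{conv}(\{x\}\cup C)\) at \(x\) (Definition 2.23). It then suffices to show the following: for \(x'=\Phi(x)\), the cone \(CN'_{x'}\) built from \(\Phi(C)\) satisfies \(CN'_{x'}\cap\Phi(\Omega)=\emptyset\). Equivalently, I need \(\Phi^{-1}(CN'_{x'})\cap\Omega=\emptyset\). Applying Proposition 2.10 again, now to the convex set \(\Phi(C)\), then gives the \(\Phi(C)\)-GNP.

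I would argue by contradiction. Suppose \(y\in\Omega\) with \(\Phi(y)\in CN'_{x'}\). By Remark 2.24, \(\Phi(C)\) then lies in the closed half-space determined by the hyperplane through \(x'\) orthogonal to \(\Phi(y)-x'\). Since \(\Omega\) is connected and contains \(\operatorname{int}(C)\), I would join \(y\) to \(C\) by a path inside \(\Omega\), push the path forward by \(\Phi\), and try to show that it must cross \(\partial\Phi(\Omega)\) inside the cone. That would contradict the openness of \(\Phi(\Omega)\) near \(x'\).

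\textbf{Main obstacle.} The hard point is exactly this transfer of the cone condition. A general bilipschitz map does not send the half-space inequality \((y-x)\cdot(c-x)\le0\) to the analogous inequality for the images. What I would try first is infinitesimal. By Rademacher's theorem \(\Phi\) is differentiable almost everywhere, with \(L_1|v|\le|D\Phi(x)v|\le L_2|v|\). At points of \(\partial\Omega\setminus C\) where both \(\partial\Omega\) and \(\Phi\) are differentiable, I would compare the inward normal \(\nu'\) of \(\Phi(\Omega)\) with \(D\Phi(x)^{-T}\nu\). I would then use the convexity of \(\Phi(C)\) to show that the ray from \(x'\) in direction \(\nu'\) meets \(\Phi(C)\). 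The hypothesis that \(\Phi(C)\) is convex is the only structural input here. If this pointwise argument goes through at almost every boundary point, I would extend it to all points where the normal exists using the Hausdorff stability of the normal cones. This is the same argument as in the proof of Theorem 2.25, where \(CN_{x_n}\to CN_x\) as \(x_n\to x\). I expect this step to be where additional structure on \(\Phi\) has to be extracted or imposed, so it is the part of the plan I regard as least secure.
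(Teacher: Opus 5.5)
Your Step~2 is not just the least secure part: it cannot be completed, because the proposition is false as stated, already for linear $\Phi$. Take $N=2$, $C=\bar B(0,1)$ and $\Omega=B(0,2)$. This $\Omega$ has the $C$-GNP: every inward normal passes through $0$, and each $\Delta_c\cap\Omega$ is a segment. Let $\Phi(x_1,x_2)=(2x_1,x_2)$, which is bilipschitz, and $\Phi(C)=\{x_1^2/4+x_2^2\le 1\}$ is convex. Then $\Phi(\Omega)=\{x_1^2/16+x_2^2/4<1\}$, and $P=(8/\sqrt5,2/\sqrt5)\in\partial\Phi(\Omega)\setminus\Phi(C)$. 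The gradient of $x_1^2/16+x_2^2/4$ at $P$ is $(1,1)/\sqrt5$, so the inward normal ray at $P$ lies on the line $x_1-x_2=6/\sqrt5$. On $\Phi(C)$, Cauchy--Schwarz gives $x_1-x_2\le\sqrt5<6/\sqrt5$, so the ray misses $\Phi(C)$ and condition (4) fails. This is exactly the obstruction you flagged. The direction from $x$ toward $C$ is transported by $D\Phi$, while the normal is transported by $(D\Phi)^{-T}$. These are parallel for every normal direction only when $D\Phi$ is conformal. So neither your path argument nor the Rademacher comparison of $\nu'$ with $(D\Phi)^{-T}\nu$ can succeed, and convexity of $\Phi(C)$ does not help.

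The paper's own proof does not supply an idea you overlooked. It asserts that the inward normal of $\Phi(\Omega)$ is the image under the differential of the inward normal of $\Omega$, which is the same error. In the example, at $y=\Phi^{-1}(P)=(4/\sqrt5,2/\sqrt5)$ one has $D\Phi(-y/2)=-P/2$, whose ray does pass through $0\in\Phi(C)$. But the true inward normal is $(D\Phi)^{-T}(-y/2)\propto-(1,1)$.

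Your Step~1 argument for condition (2) is also unsound. Bilipschitz maps need not map cones into sets containing cones, nor preserve locally Lipschitz boundaries. For instance, the planar map $(r,\theta)\mapsto(r,\theta+\log r)$ in polar coordinates is bilipschitz and sends a line through the origin to two logarithmic spirals, so the image of a half-plane is not locally Lipschitz at $0$.

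What your plan does prove, essentially verbatim, is the statement for similarities $\Phi(x)=a+\rho Rx$ with $R$ orthogonal and $\rho>0$. These map rays to rays, normals to normals, and Lipschitz graphs to Lipschitz graphs. They also map the cone $CN_x$ built from $C$ onto the cone built from $\Phi(C)$ at $\Phi(x)$, so all four conditions (or the $C$-SP) transfer directly.
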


\begin{proof}
Since \(\Phi\) is bilipschitz, it preserves the local Lipschitz property of the boundary.
Let \(x \in \partial \Phi(\Omega) \setminus \Phi(C)\). There exists a unique \(y \in \partial \Omega \setminus C\) such that \(x = \Phi(y)\).
The inward normal to \(\Phi(\Omega)\) at \(x\) is the image by the differential (which exists almost everywhere) of the inward normal to \(\Omega\) at \(y\).
Since \(\Omega\) satisfies the \(C\)-GNP, the inward normal ray at \(y\) meets \(C\).
By the bilipschitz property, the image of this ray meets \(\Phi(C)\). The other conditions of Definition 2.1 are verified similarly.
\end{proof}

\begin{remark}
The above proposition ensures that the class \(\mathcal{O}_C\) is invariant under bilipschitz transformations that preserve the convexity of \(C\).
This allows extending compactness and regularity results to families of domains obtained by bilipschitz deformation of a reference domain.
\end{remark}

\begin{corollary}[Inherited regularity under bilipschitz maps]
If \(\partial \Omega \setminus C\) is of class \(C^{k,\alpha}\) and \(\Phi\) is a bilipschitz map of regularity \(C^{k,\alpha}\), then \(\partial \Phi(\Omega) \setminus \Phi(C)\) is also of class \(C^{k,\alpha}\).
\end{corollary}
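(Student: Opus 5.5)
The plan is a local-graph argument. Near any point of \(\partial \Phi(\Omega) \setminus \Phi(C)\), represent \(\partial \Omega\) as the graph of a \(C^{k,\alpha}\) function. Push this representation forward by \(\Phi\), and recover a \(C^{k,\alpha}\) graph representation of the image boundary via the inverse function theorem.

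First, fix \(x_0 \in \partial \Phi(\Omega) \setminus \Phi(C)\). Since \(\Phi\) is a homeomorphism of \(\mathbb{R}^N\), we have \(\partial \Phi(\Omega) = \Phi(\partial \Omega)\) and \(\Phi(\mathbb{R}^N\setminus C) = \mathbb{R}^N \setminus \Phi(C)\). So \(x_0 = \Phi(y_0)\) for a unique \(y_0 \in \partial\Omega \setminus C\). Because \(\partial\Omega\setminus C\) is \(C^{k,\alpha}\), there is a neighbourhood \(U\) of \(y_0\), disjoint from \(C\), and a \(C^{k,\alpha}\) parametrization \(\psi : V \subset \mathbb{R}^{N-1} \to \partial\Omega \cap U\) with \(D\psi\) injective. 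We can also use a \(C^{k,\alpha}\) defining function \(g\) with \(\Omega\cap U=\{g<0\}\) and \(\nabla g\neq 0\).

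Second, we upgrade \(\Phi\) to a local \(C^{k,\alpha}\) diffeomorphism. Since \(\Phi\) is \(C^{k,\alpha}\) with \(k\geq 1\), the bilipschitz lower bound \(L_1\|h\|\le \|D\Phi(y)h\|\) follows by differentiating the inequality \(L_1\|x-y\|\le\|\Phi(x)-\Phi(y)\|\) along \(x=y+th\), \(t\to 0\). Hence \(D\Phi(y)\) is invertible at every point. The inverse function theorem in Hölder classes then says that \(\Phi^{-1}\) is \(C^{k,\alpha}\) near \(x_0\): the inverse is \(C^k\), and its \(k\)-th derivatives are compositions of \(C^{0,\alpha}\) maps with Lipschitz maps and with the smooth inversion \(A\mapsto A^{-1}\) on invertible matrices.

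Third, conclude. Near \(x_0\), \(\Phi(\Omega)\cap \Phi(U) = \{g\circ\Phi^{-1}<0\}\), with \(g\circ \Phi^{-1}\in C^{k,\alpha}\) and \(\nabla(g\circ\Phi^{-1}) = (D\Phi^{-1})^{T}\nabla g\circ\Phi^{-1} \neq 0\). By the implicit function theorem, \(\partial\Phi(\Omega)\) is locally a \(C^{k,\alpha}\) graph near \(x_0\), and \(\Phi(\Omega)\) lies on one side of it. Since \(x_0\) was arbitrary, \(\partial \Phi(\Omega)\setminus\Phi(C)\) is of class \(C^{k,\alpha}\). The Lipschitz structure is in any case preserved by Proposition 3.9, so the only new content is the Hölder bookkeeping.

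The main obstacle is the composition step: showing that compositions and inverses of \(C^{k,\alpha}\) maps stay \(C^{k,\alpha}\) for \(k\geq1\), \(0<\alpha<1\). I would handle it by the Faà di Bruno formula. Each \(k\)-th derivative of \(g\circ\Phi^{-1}\) is a sum of products of derivatives of order \(\le k\) of \(g\) and \(\Phi^{-1}\). The top-order factors are \(\alpha\)-Hölder composed with Lipschitz maps, hence \(\alpha\)-Hölder; the lower-order factors are Lipschitz. Products of bounded Hölder functions are Hölder on the compact neighbourhood considered.
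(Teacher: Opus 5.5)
Your argument is correct. The paper gives no proof to compare it with: the corollary is asserted directly after the proposition on preservation of the $C$-GNP, and the only regularity-related justification there is the sentence ``since $\Phi$ is bilipschitz, it preserves the local Lipschitz property of the boundary.'' Your proof supplies the missing content.

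The decisive step is your second one. Differentiating $L_1\|x-y\|\le\|\Phi(x)-\Phi(y)\|$ along $x=y+th$ shows that $D\Phi$ is invertible everywhere. Hence the globally injective $C^{k,\alpha}$ map $\Phi$ is a $C^{k,\alpha}$ diffeomorphism with $C^{k,\alpha}$ inverse. After that, the rest is routine and you carry it out correctly: transport the defining function to $g\circ\Phi^{-1}$, check $\nabla(g\circ\Phi^{-1})\neq 0$, and apply the implicit function theorem with the Fa\`a di Bruno bookkeeping. Two small points: in the third step write $\Phi(\Omega)\cap\Phi(U)=\{x\in\Phi(U): g(\Phi^{-1}(x))<0\}$, and note that the parametrization $\psi$ is never used.

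One caution: delete the closing remark that ``the Lipschitz structure is in any case preserved'' by the paper's proposition. That claim is false for general bilipschitz maps. The map $z\mapsto z\,e^{i\log|z|}$, i.e.\ $(r,\theta)\mapsto(r,\theta+\log r)$ in polar coordinates, is a bilipschitz homeomorphism of $\mathbb{R}^2$. It sends the upper half-plane onto the region between the spirals $\theta=\log r$ and $\theta=\log r+\pi$, whose boundary is not locally a graph at the origin.

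The same example shows that your standing assumption $k\ge 1$ is essential, not cosmetic. The statement fails for $k=0$; the paper's surrounding context has $k\ge 2$. What carries your proof is the combination of differentiability with the lower Lipschitz bound, not the bilipschitz property alone.
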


This result is useful in shape optimization problems where one wants to prevent the formation of overly sharp singularities while allowing large geometric deformations.

\subsection{Conclusion}
The regularity of the boundary \(\partial \Omega\) of a C-GNP domain is determined by both the regularity of the convex set \(C\) and that of the thickness function \(d\).
Specifically:
- If \(C\) is sufficiently smooth, the regularity of \(\partial \Omega\) is precisely the regularity of \(d\).
- If \(C\) has limited regularity, it imposes an upper bound on the regularity of \(\partial \Omega\), even if \(d\) is very smooth.
These observations are useful in shape optimization problems where one wishes to impose or deduce regularity properties of optimal shapes.

\section{A class of non-connected open sets}
In this section, we introduce a class of open sets in \(\mathbb{R}^{N}\), \(N \geq 2\), written as a union of two open sets satisfying a geometric property of the inward normal for each of them.
We begin by proving the compactness result for the Hausdorff topology.
Then, we show that the convergences in the sense of (Hausdorff, compact, and characteristic functions) are equivalent.
Finally, we give a continuity result for the Dirichlet problem.

\begin{definition}
Let \(\delta > 0\) and \(C_1\), \(C_2\) be two convex compact sets in \(\mathbb{R}^N\), such that \(C_1 \cap C_2 = \emptyset\).
We say that an open subset \(\Omega\) satisfies the \(C_{1,2}^{\delta}\)-GNP if:
\[\exists \Omega^1\in \mathcal{O}_{C_1},\;\Omega^2\in \mathcal{O}_{C_1:}\;\mathrm{~such~that~}\Omega = \Omega^1\cup \Omega^2,\]
and for all \(x^1 \in \bar{\Omega}^1\), \(d(x^1, \bar{\Omega}^2) \geq \delta\),
\[\mathcal{O}_{1,2}^{\delta} = \{D\supset \Omega = \Omega^1\cup \Omega^2\; /\; \forall x^1\in \bar{\Omega}^1:\; d(x^1,\bar{\Omega}^2)\geq \delta ,\;\Omega^1\in \mathcal{O}_{C_1},\;\Omega^2\in \mathcal{O}_{C_2}\} ,\]
where
\[\mathcal{O}_{C_i} = \{\Omega^i \subset D\; /\; \Omega^i\; \mathrm{has}\; C_i - \mathrm{GNP}\}\; ,i\in \{1,2\} .\]
\end{definition}

\begin{definition}
Let \(\delta > 0\) and \(C_1\), \(C_2\) be two convex compact sets in \(\mathbb{R}^N\), such that \(C_1 \cap C_2 = \emptyset\).
We say that an open subset \(\Omega\) satisfies the \(C_{P_{1,2}}^{\delta}\)-GNP if:
\[\exists \Omega^1\in \mathcal{O}_{C_1},\;\Omega^2\in \mathcal{O}_{C_2}\;\mathrm{~such~that~}\;\Omega = \Omega^1\cup \Omega^2,\]
and for all \(x \in \bar{\Omega}^1\), \(d(x, P_{\bar{\Omega}^2}(x)) \geq \delta\), such that \(\delta > 0\), \(\bar{\Omega}^1 = Conv(\bar{\Omega}^1)\), \(\bar{\Omega}^2 = Conv(\bar{\Omega}^2)\) and \(P_{\bar{\Omega}^2}(x)\) is the projection of \(x\) on \(\bar{\Omega}^2\).
\[\mathcal{O}_{P_{1,2}}^{\delta} = \{D\supset \Omega = \Omega^1\cup \Omega^2\; /\; \forall x\in \bar{\Omega}^1,\; d(x,P_{\bar{\Omega}^2}(x))\geq \delta ,\;\Omega^1\in \mathcal{O}_{C_1},\;\Omega^2\in \mathcal{O}_{C_2}\} ,\]
where
\[\mathcal{O}_{C_i} = \{\Omega^i \subset D / \Omega^i\; \mathrm{has} C_i - \mathrm{GNP}\},\;i\in \{1,2\} .\]
\end{definition}

\begin{remark}
The condition \(\forall x^1 \in \bar{\Omega}^1 d(x^1, \bar{\Omega}^2) \geq \delta\) in Definition 5.1 (resp. \(\forall x \in \bar{\Omega}^1 d(x, P_{\bar{\Omega}^2}(x)) \geq \delta\) in Definition 4.2) implies that \(\bar{\Omega}^1 \cap \bar{\Omega}^2 = \emptyset\).
\end{remark}

\subsection{Compactness result for Hausdorff topology}

Now, we will prove the compactness result for Hausdorff topology in this class.
\begin{theorem}
\(\mathcal{O}_{1,2}^{\delta}\) and \(\mathcal{O}_{P_{1,2}}^{\delta}\) are compact for Hausdorff topology.
\end{theorem}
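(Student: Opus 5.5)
Take a sequence \(\Omega_n=\Omega_n^1\cup\Omega_n^2\) in \(\mathcal{O}_{1,2}^{\delta}\) (resp. \(\mathcal{O}_{P_{1,2}}^{\delta}\)) with \(\Omega_n^i\in\mathcal{O}_{C_i}\). The plan is to show that a subsequence converges in the Hausdorff sense to some \(\Omega=\Omega^1\cup\Omega^2\) in the same class.

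\textbf{Step 1: extract limits of the pieces.} Apply Theorem 2.19 to \((\Omega_n^1)\) in \(\mathcal{O}_{C_1}\). This gives a subsequence and an open \(\Omega^1\in\mathcal{O}_{C_1}\) with \(\Omega_n^1\xrightarrow{H}\Omega^1\). Apply Theorem 2.19 again along that subsequence to \((\Omega_n^2)\), obtaining \(\Omega^2\in\mathcal{O}_{C_2}\) with \(\Omega_n^2\xrightarrow{H}\Omega^2\). Since all pieces lie in \(\mathcal{O}_C\)-type classes, Proposition 2.20 gives \(\bar\Omega_n^i\xrightarrow{H}\bar\Omega^i\) for \(i=1,2\).

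\textbf{Step 2: pass the separation constraint to the limit.} For \(\mathcal{O}_{1,2}^{\delta}\) the constraint says \(d(\bar\Omega_n^1,\bar\Omega_n^2)\ge\delta\), and the distance between compact sets is continuous for \(d^H\). Concretely, take \(x\in\bar\Omega^1\) and \(y\in\bar\Omega^2\). There exist \(x_n\in\bar\Omega_n^1\) and \(y_n\in\bar\Omega_n^2\) with \(x_n\to x\) and \(y_n\to y\). Then \(|x-y|=\lim|x_n-y_n|\ge\delta\), so \(d(x,\bar\Omega^2)\ge\delta\).

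For \(\mathcal{O}_{P_{1,2}}^{\delta}\), convexity of \(\bar\Omega_n^i\) passes to the Hausdorff limit, because limits of convex compacts are convex; hence \(\bar\Omega^i=Conv(\bar\Omega^i)\). The constraint \(d(x,P_{\bar\Omega^2}(x))\ge\delta\) is exactly \(d(x,\bar\Omega^2)\ge\delta\), since the projection realizes the distance. The argument above then applies verbatim; alternatively, use continuity of metric projections onto convex sets under Hausdorff convergence.

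\textbf{Step 3: identify the limit of the union.} This is the main obstacle: showing \(\Omega_n\xrightarrow{H}\Omega^1\cup\Omega^2\), since Lemma 2.14 handles intersections of open sets and unions of compacts, not unions of open sets. I would pass to complements: \(\bar D\setminus\Omega_n=(\bar D\setminus\Omega_n^1)\cap(\bar D\setminus\Omega_n^2)\), an intersection of compacts, which is in general not Hausdorff-stable. Here the uniform \(\delta\)-separation saves us. Let \(U^1=\{x: d(x,\bar\Omega^1)<\delta/2\}\). For \(n\) large, \(\bar\Omega_n^1\subset U^1\) and \(\bar\Omega_n^2\cap U^1=\emptyset\). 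Localizing, \(D\setminus\Omega_n\) coincides near \(\bar U^1\) with \(D\setminus\Omega_n^1\), and away from \(U^1\) (in a neighbourhood of \(\bar\Omega^2\)) with \(D\setminus\Omega_n^2\). Hausdorff convergence of each complement, combined with the characterization \(K=\{x:\exists x_n\in K_n,\ x_n\to x\}\) plus the dual "every cluster point belongs to \(K\)", then gives \(D\setminus\Omega_n\xrightarrow{H}D\setminus(\Omega^1\cup\Omega^2)\). Alternatively, the complement limit can be checked pointwise using Proposition 2.15 and compact convergence (Theorem 2.19 (2)) for each piece.

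Finally, the limit \(\Omega=\Omega^1\cup\Omega^2\) satisfies \(\Omega\subset D\), \(\Omega^i\in\mathcal{O}_{C_i}\), and the separation condition, so it belongs to the class. Hence the class is compact.
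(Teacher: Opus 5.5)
Your proof is correct. Steps 1--2 follow the paper: limits of the pieces come from Theorem 2.19, and the separation passes to the limit through approximating points of $\bar\Omega_n^i$. Step 3, however, takes a different route.

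The paper first extracts an abstract Hausdorff limit $\Omega$ of $\Omega_n$ and then identifies it. It gets $\Omega^1\cup\Omega^2\subset\Omega$ from stability of inclusions (Lemma 2.13). It proves the reverse inclusion by contradiction: using the $\delta$-gap (and, implicitly, $\Omega^i=\operatorname{int}\bar\Omega^i$), it finds a closed ball $\bar B\subset\Omega\setminus(\Omega^1\cup\Omega^2)$. This ball lies in $\Omega_n$ for large $n$ by Proposition 2.15. Since $\Omega_n^1\cap\Omega_n^2=\emptyset$ and $\bar B$ is connected, it lies in a single piece $\Omega_n^i$, hence in $\Omega^i$, a contradiction.

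You instead build the candidate limit and check convergence of the complements directly. You localize on an open cover (a neighbourhood of $\bar U^1$ of width less than $\delta$, and $\mathbb{R}^N\setminus\bar U^1$), on each part of which everything reduces to a single piece. This avoids both the extraction and the connectedness argument. The price is that it rests on the uniform inclusions $\bar\Omega_n^1\subset U^1$ and $\bar\Omega_n^2\cap U^1=\emptyset$, that is, on convergence of the closures (Proposition 2.20). That property is false for general open sets (Remark 2.22), whereas the paper's route never needs it. When you write Step 3 out, state explicitly that $\bar D\setminus(\Omega^1\cup\Omega^2)$ localizes the same way on the two pieces of the cover; this is where your Step 2 bound $d(\bar\Omega^1,\bar\Omega^2)\ge\delta$ enters.

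For $\mathcal{O}_{P_{1,2}}^{\delta}$ your argument is actually sounder than the paper's, which has two gaps. First, the paper passes to the limit using Lemma 4.5, whose bound $4a+|x-y|$ contains a diameter and does not tend to zero. Second, it never checks that the closures stay convex. Your observation that $|x-P_K(x)|=d(x,K)$ for closed convex $K$ reduces the constraint to $d(\bar\Omega^1,\bar\Omega^2)\ge\delta$. Convexity of $\bar\Omega^i$ survives because Hausdorff limits of convex compacts are convex. Together these close both gaps.
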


\begin{lemma}
Let \(K\), \(L\) be two convex and bounded sets of \(\mathbb{R}^{N}\).
Then, for all \(x, y \in \mathbb{R}^{N}\),
\[|P_K(x) - P_L(y)| \leq 4a + |x - y|,\]
where \(a\) is the diameter of \(K \cup L \cup \{x, y\}\), \(P_K(x)\) is the projection of \(x\) on \(K\), and \(P_L(y)\) is the projection of \(y\) on \(L\).
\end{lemma}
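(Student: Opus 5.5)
The bound is very weak, so the plan is to use only the triangle inequality together with the fact that each projection lands in the set it projects onto. No nonexpansiveness of $P_K$ or $P_L$ is needed; their existence and single-valuedness is the only place convexity enters. Set $A = K\cup L\cup\{x,y\}$ and $a=\operatorname{diam}(A)$, which is finite since $K$ and $L$ are bounded. Take the projections onto the closures $\bar K,\bar L$ if $K,L$ are not closed; the closures have the same diameter, so $a$ is unchanged.

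\textbf{Step 1.} Show that $|P_K(x)-x|\le a$. Since $P_K(x)$ minimizes the distance from $x$ to $\bar K$, for any $k\in K$ we have $|P_K(x)-x|\le |k-x|\le a$, because both $k$ and $x$ lie in $A$. The same argument gives $|P_L(y)-y|\le a$.

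\textbf{Step 2.} Apply the triangle inequality:
\[
|P_K(x)-P_L(y)|\le |P_K(x)-x|+|x-y|+|y-P_L(y)|\le 2a+|x-y|\le 4a+|x-y|.
\]
Alternatively, one can note directly that $P_K(x),P_L(y)\in \bar A$, so their distance is at most $a$, which is already stronger than the claim.

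No real obstacle is expected. The only points that need care are that $K$ and $L$ should be nonempty (with closed convex closures) so that the projections exist and are unique, and that $a<\infty$. I would state these assumptions explicitly at the start of the proof.
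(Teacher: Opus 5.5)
Your proof is correct and is essentially the paper's argument: a triangle-inequality chain in which each point-to-projection distance, such as \(|P_K(x)-x|=d(x,K)\), is bounded by the diameter \(a\). The paper routes through the extra point \(P_L(x)\), writing \(|P_K(x)-P_L(y)|\le |P_K(x)-P_L(x)|+|P_L(x)-P_L(y)|\) and collecting four terms of size \(a\); your direct chain \(P_K(x)\to x\to y\to P_L(y)\) gives the sharper bound \(2a+|x-y|\), and your remark that both projections lie in \(\bar A\) even gives \(a\).
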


\begin{proof}
Let \(K\), \(L\) be two convex and bounded sets of \(\mathbb{R}^{N}\), and \(x, y \in \mathbb{R}^{N}\)
\[|P_K(x) - P_L(y)| \leq |P_K(x) - P_L(x)| + |P_L(x) - P_L(y)|\]
\[\qquad \leq |P_K(x) - x| + |x - P_L(x)| + |P_L(x) - x| +|x - y| + |y - P_L(y)|\]
\[\qquad \leq d(x,K) + d(x,L) + d(x,L) + |x - y| + d(y,L)\]
\[\qquad \leq a + a + a + |x - y| + a\]
\[\qquad \leq 4a + |x - y|.\]
\end{proof}

\begin{proof}[Proof of Theorem 4.4]
- Let \(\Omega_{n} = \Omega_{n}^{1}\cup \Omega_{n}^{2}\) be a sequence of open subsets of \(D\) which have \(C_{1,2}^{\delta}\)-GNP, then there exists an open subset \(\Omega \subset D\) and a subsequence (denoted by \(\Omega_{n}\)) such that \(\Omega_{n}\xrightarrow{H}\Omega\).
Since \(\Omega_{n}^{1}\) satisfies \(C_{1}\)-GNP (resp. \(\Omega_{n}^{2}\) satisfies \(C_{2}\)-GNP) then there exists an open subset \(\Omega^{1}\) (resp. \(\Omega^{2})\subset D\) and a subsequence also denoted by \(\Omega_{n}^{1}\) (resp. \(\Omega_{n}^{2}\)) such that \(\Omega_{n}^{1}\xrightarrow{H}\Omega^{1}\) (resp. \(\Omega_{n}^{2}\xrightarrow{H}\Omega^{2}\)).
By Theorem 2.19 we obtain that \(\Omega^{1}\) satisfies \(C_{1}\)-GNP (resp. \(\Omega^{2}\) satisfies \(C_{2}\)-GNP).
Now, let \(x_{n}^{1}\) in \(\Omega_{n}^{1}\), such that \(d(x_{n}^{1},\Omega_{n}^{2})\geq \delta\).
Let \(x^{1}\in \bar{\Omega}^{1}\), and \(x^{2}\in \bar{\Omega}^{2}\), then there exist \((y_{n}^{1})_{n\in \mathbb{N}}\subset \bar{\Omega}_{n}^{1}\), and \((y_{n}^{2})_{n\in \mathbb{N}}\subset \bar{\Omega}_{n}^{2}\) such that \(y_{n}^{1}\longrightarrow x^{1}\), and \(y_{n}^{2}\longrightarrow x^{2}\), so \(d(y_{n}^{1},\Omega_{n}^{2})\geq \delta\), then \(d(y_{n}^{1},y_{n}^{2})\geq \delta\), by passing to the limit, we get \(d(x^{1},x^{2})\geq \delta\), \(\forall x^{1}\in \bar{\Omega}^{1}\), \(\forall x^{2}\in \bar{\Omega}^{2}\).
So \(\forall x^{1}\in \Omega^{1}\), \(d(x^{1},\Omega^{2})\geq \delta\).

Our purpose in the sequel is to show that \(\Omega = \Omega^{1}\cup \Omega^{2}\).

The first inclusion, \(\Omega^{1}\cup \Omega^{2}\subset \Omega\) is guaranteed by Lemma 2.13.
It remains to prove that \(\Omega \subset \Omega^{1}\cup \Omega^{2}\).
Suppose that \(\Omega \setminus (\Omega^{1}\cup \Omega^{2})\neq \emptyset\), then \(\exists x\in \Omega \cap (\partial \Omega^{1}\cup \partial \Omega^{2})\).
Suppose that \(x\in \partial \Omega^{1}\) (same if \(x\in \partial \Omega^{2}\)), we have \(\forall x^{1}\in\) \(\bar{\Omega}^{1}\) \(d(x^{1},\bar{\Omega}^{2})\geq \delta\), let \(0< r< \delta\) then \(B(x,r)\cap \bar{\Omega}^{2} = \emptyset\) and \(B(x,r)\setminus \bar{\Omega}^{1}\neq \emptyset\), then \(B(x,r)\setminus (\bar{\Omega}^{1}\cup \bar{\Omega}^{2})\neq \emptyset\).
We conclude that \(\exists y\in \Omega \setminus \overline{\Omega^{1}\cup\Omega^{2}}\). So there exists a bounded ball \(\bar{B}\) centered at \(y\) such that \(\bar{B}\subset \Omega \setminus (\Omega^{1}\cup \Omega^{2})\).

So there exists \(N_{B}\in \mathbb{N}\) such that \(\forall n\geq N_{B}\) \(\bar{B}\subset \Omega_{n}\). We have \(\Omega_{n}^{1}\cap \Omega_{n}^{2} = \emptyset\) (because of \(\forall x_{n}^{1}\in \bar{\Omega}_{n}^{1}\) \(d(x_{n}^{1},\bar{\Omega}_{n}^{2})\geq \delta)\).
So, either \(B\subset \Omega_{n}^{1}\) or \(\bar{B}\subset \Omega_{n}^{2}\) \(\forall n\geq N_{B}\), then either \(B\subset \Omega_{n}^{1}\), or \(B\subset \Omega_{n}^{2}\) \(\forall n\geq N_{B}\).
So by passing to the limit, either \(B\subset \Omega^{1}\) or \(B\subset \Omega^{2}\) which gives a contradiction.
We conclude that \(\Omega = \Omega^{1}\cup \Omega^{2}\), with \(\Omega^{1}\in \mathcal{O}_{C_{1}}\), \(\Omega^{2}\in \mathcal{O}_{C_{2}}\) and \(\forall x^{1}\in \bar{\Omega}^{1}\) \(d(x^{1},\bar{\Omega}^{2})\geq\) \(\delta\).
This means that, \(\Omega \in \mathcal{O}_{1,2}^{\delta}\).

Let \(\Omega_{n} = \Omega_{n}^{1}\cup \Omega_{n}^{2}\) be a sequence of open subsets of \(D\) which have \(C_{P_{1,2}}^{\delta}\)-GNP, then there exist an open subset \(\Omega \subset D\) and a subsequence (denoted by \(\Omega_{n}\)) such that

1 \(\Omega_{n}\xrightarrow{H}\Omega\).
Since \(\Omega_{n}^{1}\) satisfies \(C_{1}\)-GNP (resp. \(\Omega_{n}^{2}\) satisfies \(C_{2}\)-GNP) then there exist an open subset \(\Omega^{1}\) (resp. \(\Omega^{2})\subset D\) and a subsequence also denoted by \(\Omega_{n}^{1}\) (resp. \(\Omega_{n}^{2}\) such that \(\Omega_{n}^{1}\xrightarrow{H}\Omega^{1}\) (resp. \(\Omega_{n}^{2}\xrightarrow{H}\Omega^{2}\)). By Theorem 2.19 we obtain that \(\Omega^{1}\) satisfy \(C_{1}\)-GNP (resp. \(\Omega^{2}\) satisfy \(C_{2}\)-GNP). Now, let \(x_{n}^{1}\) in \(\widehat{\Omega}_{n}^{1}\), such that \(d(x_{n}^{1},P_{\widehat{\Omega}_{n}^{2}}(x_{n}^{1}))\geq \delta\), let \(x^{1}\in \widehat{\Omega}^{1}\), then there exist \((y_{n}^{1})_{n\in \mathbb{N}}\subset \widehat{\Omega}_{n}^{1}\), such that \(y_{n}^{1}\longrightarrow x^{1}\), so \(d(y_{n}^{1},P_{\widehat{\Omega}_{n}^{2}}(y_{n}^{1}))\geq \delta\). Then \(\delta \leq d(y_{n}^{1},P_{\widehat{\Omega}_{n}^{2}}(y_{n}^{1}))\leq d(y_{n}^{1},x^{1}) + d(x^{1},P_{\widehat{\Omega}^{2}}(x^{1})) + d(P_{\widehat{\Omega}^{2}}(x^{1}),P_{\widehat{\Omega}_{n}^{2}}(y_{n}^{1}))\), by passing to the limit and
using Lemma 4.5, we get \(d(x^{1},P_{\widehat{\Omega}^{2}}(x^{1}))\geq \delta\). Our purpose in the sequel is to show that \(\Omega = \Omega^{1}\cup \Omega^{2}\).
The first inclusion, \(\Omega^{1}\cup \Omega^{2}\subset \Omega\) is guaranteed by Lemma 2.13. It remains to prove that \(\Omega \subset \Omega^{1}\cup \Omega^{2}\).
Suppose that \(\Omega \setminus (\Omega^{1}\cup \Omega^{2})\neq \emptyset\), then \(\exists x\in \Omega \cap (\partial \Omega^{1}\cup \partial \Omega^{2})\).
Suppose that \(x\in \partial \Omega^{1}\) (same if \(x\in \partial \Omega^{2}\)), we have \(\forall x^{1}\in \widehat{\Omega}^{1}\subset \widehat{\Omega}^{1}\) \(d(x^{1},P_{\widehat{\Omega}^{2}})(x^{1})\geq \delta\), let \(0< r< \delta\) then \(B(x,r)\cap \widehat{\Omega}^{2} = \emptyset\) and \(B(x,r)\setminus \widehat{\Omega}^{1}\neq \emptyset\), then \(B(x,r)\setminus (\widehat{\Omega}^{1}\cup \widehat{\Omega}^{2})\neq \emptyset\).
We conclude that \(\exists y\in \Omega \setminus \widehat{\Omega}^{1}\cup \widehat{\Omega}^{2}\).
So there exists a bounded ball \(\bar{B}\) centered at \(y\) such that \(\bar{B}\subset \Omega \setminus (\Omega^{1}\cup \Omega^{2})\).
So there exists \(N_{B}\in \mathbb{N}\) such that \(\forall n\geq N_{B}\) \(\bar{B}\subset \Omega_{n}\).
We have \(\Omega_{n}^{1}\cap \Omega_{n}^{2} = \emptyset\) (because of \(\forall x_{n}^{1}\in \widehat{\Omega}^{1}\) \(d(x_{n}^{1},P_{\widehat{\Omega}_{n}^{2}}(x_{n}^{1}))\geq \delta\)).
So, either \(\bar{B}\subset \Omega_{n}^{1}\) or \(\bar{B}\subset \Omega_{n}^{2}\forall n\geq N_{B}\), then either \(B\subset \Omega_{n}^{1}\), or \(B\subset \Omega_{n}^{2}\forall n\geq N_{B}\).
So by passing to the limit, either \(B\subset \Omega^{1}\) or \(B\subset \Omega^{2}\), which gives a contradiction.
We conclude that \(\Omega = \Omega^{1}\cup \Omega^{2}\), with \(\Omega^{1}\in \mathcal{O}_{C_{1}}\), \(\Omega^{2}\in \mathcal{O}_{C_{2}}\) and \(\forall x^{1}\in \widehat{\Omega}^{1}\) \(d(x^{1},P_{\widehat{\Omega}^{2}}(x^{1}))\geq \delta\).
This means that, \(\Omega \in \mathcal{O}_{P_{1,2}}^{\delta}\).
\end{proof}

\subsection{Equivalence between the three convergences}

In this subsection, we will show that the three mentioned convergences are equivalent in \(\mathcal{O}_{1,2}^{\delta}\) (resp. \(\mathcal{O}_{P_{1,2}}^{\delta}\)).
\begin{theorem}
Let \((\Omega_{n})_{n\in \mathbb{N}}\subset \mathcal{O}_{1,2}^{\delta}\) (resp. \(\mathcal{O}_{P_{1,2}}^{\delta}\)) then there exist \(\Omega \subset D\) and a subsequence (denoted by \(\Omega_{n}\)) such that:
\begin{enumerate}
\item \(\Omega_{n}\xrightarrow{H}\Omega\)
\item \(\Omega_{n}\xrightarrow{K}\Omega\)
\item \(\Omega_{n}\xrightarrow{L}\Omega\)
\end{enumerate}
The assertions (1), (2), and (3) are equivalent.
\end{theorem}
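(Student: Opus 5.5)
The existence of \(\Omega\) and of a subsequence with \(\Omega_n\xrightarrow{H}\Omega\) is exactly Theorem 4.4. So the work is in the equivalence of (1), (2) and (3). I would reduce everything to the components \(\Omega_n^1\in\mathcal{O}_{C_1}\) and \(\Omega_n^2\in\mathcal{O}_{C_2}\) and apply Theorem 2.19 to each one.

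\textbf{Splitting into components.} First I extract a subsequence such that \(\Omega_n^i\xrightarrow{H}\Omega^i\) for \(i=1,2\). As in the proof of Theorem 4.4, we then have \(\Omega=\Omega^1\cup\Omega^2\) and \(d(\bar\Omega^1,\bar\Omega^2)\geq\delta\). In the class \(\mathcal{O}_{P_{1,2}}^{\delta}\), the projection condition at \(x\in\bar\Omega^1\) gives \(d(x,\bar\Omega^2)\geq\delta\), so this case reduces to the same separation. By Theorem 2.19, each component satisfies H \(\Leftrightarrow\) K \(\Leftrightarrow\) L convergence. Since \(\Omega^i\in\mathcal{O}_{C_i}\) and \(\partial\Omega^i\setminus C_i\) is locally Lipschitz, \(\partial\Omega^i\) has null measure.

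\textbf{Transfer to the union.} For (1)\(\Rightarrow\)(2), a compact \(K\subset\Omega\) splits as \(K=(K\cap\bar\Omega^1)\cup(K\cap\bar\Omega^2)\). The separation makes these two pieces compact subsets of \(\Omega^1\) and \(\Omega^2\) respectively, so each lies in \(\Omega_n^i\) for \(n\) large. For a compact \(L\subset\bar\Omega^c=(\bar\Omega^1)^c\cap(\bar\Omega^2)^c\), compact convergence of each component gives \(L\subset(\bar\Omega_n^1)^c\cap(\bar\Omega_n^2)^c=\bar\Omega_n^c\). For (2)\(\Rightarrow\)(3), I apply Proposition 2.18, since \(|\partial\Omega|\leq|\partial\Omega^1|+|\partial\Omega^2|=0\). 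Alternatively, \(\chi_{\Omega_n}=\chi_{\Omega_n^1}+\chi_{\Omega_n^2}\) because the components are disjoint, and convergence follows component by component.

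\textbf{The converse directions and the main obstacle.} For (3)\(\Rightarrow\)(1) and (2)\(\Rightarrow\)(1), I argue by subsequences. Take any subsequence. By Theorem 4.4 it has a further subsequence converging in Hausdorff to some \(\tilde\Omega\in\mathcal{O}_{1,2}^{\delta}\), and by the previous step it then also converges in L and in K to \(\tilde\Omega\). Uniqueness of limits then gives \(\tilde\Omega=\Omega\). Since every subsequence has a sub-subsequence converging to \(\Omega\), the whole sequence converges in Hausdorff.

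The hard part is this uniqueness. For L-convergence, \(\chi_{\tilde\Omega}=\chi_\Omega\) a.e. only gives \(|\tilde\Omega\,\triangle\,\Omega|=0\). To conclude \(\tilde\Omega=\Omega\) I will use that both are finite unions of Carathéodory-type open sets (\(\operatorname{int}\bar\Omega=\Omega\)), as in Proposition 2.20. Each component \(\Omega^i\) is star-shaped relative to \(C_i\) in the sense of the \(C\)-GNP and has null-measure boundary, so \(\Omega^i=\operatorname{int}(\text{Lebesgue points of density }1)\). The same then holds for \(\Omega\), and two such sets that agree a.e. coincide. For K-convergence, uniqueness is immediate from the two defining conditions of Definition 2.16 applied to open sets equal to the interiors of their closures.
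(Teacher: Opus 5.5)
Your proof is correct. The forward implications are essentially the paper's, but your converse directions take a genuinely different route.

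For the forward direction, the paper also gets the exterior condition of compact convergence from componentwise compact convergence (Theorem 2.19) together with $\bar\Omega=\bar\Omega^1\cup\bar\Omega^2$. It gets (3) from $\chi_{\Omega_n}=\chi_{\Omega_n^1}+\chi_{\Omega_n^2}$, which is your ``alternative''. Your main route through Proposition 2.18 additionally needs $|\partial\Omega^i|=0$, and for that you should also note that $\partial\Omega^i\cap C_i\subset\partial C_i$, not only the Lipschitz part.

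For the converses, the paper proves (2)$\Rightarrow$(1) and (3)$\Rightarrow$(1) by deferring to the arguments of Propositions 3.6 and 3.8 of [1], invoking only that $\Omega$ is of Carath\'eodory type. You instead combine four ingredients: the compactness of Theorem 4.4, the forward implications, uniqueness of K- and L-limits among Carath\'eodory open sets, and the subsequence principle in the Hausdorff metric. What this buys:
\begin{itemize}
\item It is self-contained within this paper.
\item It works verbatim for any Hausdorff-compact class of Carath\'eodory open sets in which (1) implies (2) and (3).
\item It makes visible a hypothesis left implicit in the statement: the K- or L-limit $\Omega$ must lie in the class. Otherwise $\Omega\setminus\{p\}$ would be an equally valid K- and L-limit but not a Hausdorff one.
\end{itemize}

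Three small remarks.
\begin{itemize}
\item The uniqueness step needs no density points. If $A,B$ are open with $|A\setminus B|=0$, then $A\subset\bar B$, hence $A\subset\operatorname{int}\bar B=B$. Null boundary alone would not give this (a punctured ball has null boundary); what does the work is the Carath\'eodory property, which you take from the paper and which passes to the union because the closures are $\delta$-separated.
\item Componentwise Hausdorff convergence is only available along a subsequence. So your (1)$\Rightarrow$(2) for the full sequence silently uses the same subsequence principle you invoke for the converses.
\item Your explicit reduction of $\mathcal{O}_{P_{1,2}}^{\delta}$ to the separation condition, via $d(x,P_{\bar\Omega^2}(x))=d(x,\bar\Omega^2)$ for closed convex $\bar\Omega^2$, is more precise than the paper's ``same arguments''.
\end{itemize}
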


\begin{proof}
\(\bullet\) In \(\mathcal{O}_{1,2}^{\delta}\)

\(\circ\) (1) \(\Rightarrow\) (2): Let \(\Omega_{n} = \Omega_{n}^{1}\cup \Omega_{n}^{2}\xrightarrow{H}\Omega = \Omega^{1}\cup \Omega^{2}\), (1) of Definition 2.16 is verified by Proposition 2.15.
Let \(L\) be a compact set of \(D\) such that \(L\subset \overline{\Omega}^{c}\), so \(L\subset \overline{\Omega^{c}}\), and \(L\subset \overline{\Omega^{c}}\).
We already have \(\Omega_{n}^{1}\xrightarrow{H}\Omega^{1}\) and \(\Omega_{n}^{2}\xrightarrow{H}\Omega^{2}\). By (1), \(\Omega_{n}^{1}\xrightarrow{K}\Omega^{1}\) and \(\Omega_{n}^{2}\xrightarrow{K}\Omega^{2}\), then \(\exists N_{1}\in \mathbb{N}\), \(\forall n\geq N_{1}\), \(L\subset \overline{\Omega_{n}^{c}}\), and \(\exists N_{2}\in \mathbb{N}\), \(\forall n\geq N_{2}\), \(L\subset \overline{\Omega_{n}^{2c}}\).
So \(L\subset \overline{\Omega_{n}^{1c}}\cap \overline{\Omega_{n}^{2c}} = \overline{\Omega_{n}^{c}}\forall n\geq N\), with \(N = \max (N_{1},N_{2})\).
\(\circ\) (2) \(\Rightarrow\) (1): Since \(\Omega\) is of Caratheodory type as a union of two disjoint open sets of Caratheodory type, to prove the result, we use the same arguments as in the proof of Proposition 3.6 in [1].
0 (1) \(\Rightarrow\) (3): Let \(\Omega_{n} = \Omega_{n}^{1}\cup \Omega_{n}^{2}\xrightarrow{H}\Omega = \Omega^{1}\cup \Omega^{2}\),with \(\Omega_{n}^{1}\xrightarrow{H}\Omega^{1}\) and \(\Omega_{n}^{2}\xrightarrow{H}\Omega^{2}\) Since \(\Omega_{n}^{1}\cap \Omega_{n}^{2} = \emptyset\), and according to (3) of Theorem 2.19, \(\chi_{\Omega_{n}} = \chi_{\Omega_{n}^{1}\cup \Omega_{n}^{2}} = \chi_{\Omega_{n}^{1}} + \chi_{\Omega_{n}^{2}}\) (because \(\Omega_{n}^{1}\cap \Omega_{n}^{2} = \emptyset\)), which converges to
\[\chi_{\Omega^{1}} + \chi_{\Omega^{2}} = \chi_{\Omega^{1}\cup \Omega^{2}} = \chi_{\Omega}\quad in\quad L^{1}(D).\]

\(\circ\) (3) \(\Rightarrow\) (1): The proof is the same as in Proposition 3.8 [1].\\
For \(\mathcal{O}_{P_{1,2}}^{\delta}\), we use the same arguments.
\end{proof}

\begin{remark}
In this Section, we studied \(\mathcal{O}_{1,2}^{\delta}\) and \(\mathcal{O}_{P_{1,2}}^{\delta}\), two classes involving nonconnected open sets of the form \(\Omega = \Omega^{1}\cup \Omega^{2}\) where \(\Omega^{1}\) (resp. \(\Omega^{2}\)) satisfied \(C_1\)-GNP (resp. \(C_2\)-GNP) with \(C_1\) and \(C_2\) being two disjoint convex sets.
Let \(H\) be a hyperplane separating \(C_1\) from \(C_2\).
\(H\) divides \(D\) into two parts \(D_{1}\) containing \(C_1\) and \(D_2\) containing \(C_2\).
Replace in Definition 2.1 of \(C_i\)-GNP (\(i\in \{1;2\}\)) the fourth condition by the following:
\[\forall x\in (\partial \Omega \cap D_1)\cap \mathcal{N}_{\Omega}:D(x,\nu (x))\cap C_1\neq \emptyset\]
\[\forall x\in (\partial \Omega \cap D_2)\cap \mathcal{N}_{\Omega}:D(x,\nu (x))\cap C_2\neq \emptyset\]
\[(\partial \Omega \cap H)\cap \mathcal{N}_{\Omega} = \emptyset\]
We obtain a class of connected open sets which is compact for Hausdorff topology.
Notice that a domain belonging to this class verifies \(C\)-GNP where \(C\) is the convex hull of \(C_1\cup C_2\).
\end{remark}

\section{On some class of open sets based on a local geometric property}
Let \(\Omega\) be an open subset of \(D\), we consider \(m\) points \(a_{1},\ldots ,a_{m}\) and \(a_{i}\neq a_{j}\) for \(i\neq j\) and \(m\) constants \(r_i > 0\), and \(B(a_i,r_i)\) is an open ball centered in \(a_{i}\) with radius \(r_i\).
\(\mathcal{O}_m = \{\Omega \subset D\mid \partial \Omega \subset \cup_{i = 1}^{m}\overline{B} (a_i,r_i)\}\), with \(B(a_i,r_i)\cap B(a_j,r_j) = \emptyset\) for \(i\neq j\).

\begin{definition}
- We say that an open subset \(\Omega \in \mathcal{O}(m)\) if, \(\forall i\in \{1,\dots,m\} \exists B_{i}\) an open ball with radius \(\frac{r_i}{2}\), such that \(\Omega \cap B(a_i,r_i)\) has the \(B_{i} - GNP\).
Denoting \(\mathcal{O}(m) = \{\Omega \in \mathcal{O}_m\mid \forall i\in \{1,\dots,m\} \Omega \cap B(a_i,r_i)\) has the \(B_{i} - GNP\}\).
\[\mathcal{O}_{NC}(m) = \{\Omega \in \mathcal{O}_m\mid \forall i\in \{1,\dots,m\} \Omega \cap B(a_i,r_i)\in \mathcal{O}_{NC}\} .\]
\end{definition}

\subsection{Compactness result for Hausdorff topology}

Now, we will prove the compactness result for Hausdorff topology in each class.
\begin{theorem}
\(\mathcal{O}(m)\) and \(\mathcal{O}_{NC}(m)\) are compact for Hausdorff topology.
\end{theorem}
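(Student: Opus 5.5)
The plan is to follow the same scheme as in the proof of Theorem 4.4: extract a Hausdorff-convergent subsequence using the general compactness of open subsets of \(D\), then show that the limit satisfies each of the defining constraints. Let \((\Omega_n)\subset\mathcal{O}(m)\) (resp. \(\mathcal{O}_{NC}(m)\)). By the compactness of the Hausdorff topology on open subsets of the bounded set \(D\), there is a subsequence, still denoted \(\Omega_n\), and an open set \(\Omega\subset D\) with \(\Omega_n\xrightarrow{H}\Omega\). Three things must then be verified: (i) \(\partial\Omega\subset\bigcup_i\overline{B}(a_i,r_i)\), i.e. \(\Omega\in\mathcal{O}_m\); (ii) for each \(i\), \(\Omega\cap B(a_i,r_i)\) has the \(B_i\)-GNP for some ball \(B_i\) of radius \(r_i/2\) (resp. lies in \(\mathcal{O}_{NC}\)); and (iii) these local limits are compatible with the global limit.

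\textbf{Step (i).} For \(x\in\partial\Omega\), Proposition 2.15(2) gives points \(x_n\in\partial\Omega_n\) with \(x_n\to x\). Each \(x_n\) lies in the closed set \(\bigcup_i\overline{B}(a_i,r_i)\), so \(x\) does as well.

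\textbf{Step (ii).} For each \(i\), the balls \(B_i^n\) of radius \(r_i/2\) have centres in a bounded set. After a further extraction over the finitely many indices, the centres converge, so \(\overline{B_i^n}\to\overline{B_i}\), where \(B_i\) is a ball of radius \(r_i/2\). By Lemma 2.14, \(\Omega_n\cap B(a_i,r_i)\xrightarrow{H}\Omega\cap B(a_i,r_i)\).

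\textbf{Step (iii).} It remains to pass the \(B_i^n\)-GNP to the limit when the convex set itself moves. For \(\mathcal{O}_{NC}(m)\) this is handled by adapting the proof of Theorem 2.25. There, one shows \(CN_{x_n}\to CN_x\) via the inner-product estimate; the same estimate works when \(C\) is replaced by \(\overline{B_i^n}\to\overline{B_i}\), since \((y-x)\cdot(c-x)\) is jointly continuous and the centres converge. One then concludes that \(\Omega\cap B(a_i,r_i)\cap CN_x=\emptyset\). For \(\mathcal{O}(m)\) I would use Proposition 2.10 to replace the GNP by the \(C\)-SP and argue in the same way, again through normal cones. Alternatively, one can translate so that the centres coincide, noting that \(\Omega_n\cap B(a_i,r_i)\) has the \(B_i^n\)-GNP if and only if its translate has the translated-ball GNP. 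Then Theorem 2.19 gives that the limit is in \(\mathcal{O}_{B_i}\), and the translation error tends to \(0\).

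\textbf{Main obstacle.} I expect the delicate point to be Step (iii) for \(\mathcal{O}(m)\). The GNP includes local Lipschitz regularity and the condition on outward rays \(\Delta_c\), and neither is obviously stable when the reference convex set moves. The translation trick is the approach I would try first, because it reduces the question exactly to Theorem 2.19 with a fixed convex set, up to a Hausdorff perturbation of size \(|c_n-c|\to0\). A secondary issue is that \(\Omega\cap B(a_i,r_i)\) must contain \(\operatorname{int}(B_i)\). This follows from Lemma 2.13, since \(\operatorname{int}B_i^n\subset\Omega_n\cap B(a_i,r_i)\) and the inclusion passes to the Hausdorff limit.
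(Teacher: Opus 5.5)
Your proof is correct and follows the paper's overall scheme. You extract a Hausdorff limit, use Lemma 2.14 to get $\omega_n^i=\Omega_n\cap B(a_i,r_i)\xrightarrow{H}\omega^i=\Omega\cap B(a_i,r_i)$, and apply Theorem 2.19 (resp. Theorem 2.25) to each piece. You depart from the paper in two sub-steps, and in both your version is the more careful one.

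\textbf{The inclusion $\partial\Omega\subset\bigcup_i\overline{B}(a_i,r_i)$.} The paper first obtains $\partial\omega_n^i\xrightarrow{H}\partial\omega^i$ from Proposition 2.21, which requires already knowing that the limit pieces have the GNP. It then takes a finite union and simply asserts $\partial\Omega\subset\bigcup_i\partial\omega^i$. Since $\partial\omega^i\subset\overline{B}(a_i,r_i)$ trivially, that unproved inclusion already contains the conclusion. Your argument uses Proposition 2.15(2) and the closedness of $\bigcup_i\overline{B}(a_i,r_i)$. It is more elementary and uses no GNP at all, so it shows $\mathcal{O}_m$ itself is Hausdorff-closed. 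It also covers $\mathcal{O}_{NC}(m)$, where the paper never checks membership in $\mathcal{O}_m$.

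\textbf{The dependence of $B_i$ on $n$.} The paper writes that $\omega_n^i$ has the $B_i$-GNP as if the ball did not depend on $n$. Your extraction of convergent centres, followed by the translation reduction, is what actually makes the appeal to Theorem 2.19 legitimate. That reduction uses that the GNP is translation-equivariant and that Hausdorff limits are unique.

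The same translation trick works for $\mathcal{O}_{NC}(m)$, because the condition $\Omega\cap CN_x=\emptyset$ is also translation-equivariant. There you can therefore invoke Theorem 2.25 as stated, rather than re-running its inner-product estimate with a moving convex set. Your Proposition 2.10/C-SP alternative is not needed. The only extra the paper's route yields is Hausdorff convergence of the local boundaries $\partial\omega_n^i$, which the theorem does not require.
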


\begin{proof}
Let \(\Omega_{n}\) be a sequence of open subsets of \(\mathcal{O}(m)\) and \(\Omega\) be an open subset of \(D\), with \(\Omega_{n}\xrightarrow{H}\Omega\).
Then for all \(n\), \(\partial \Omega_{n}\subset \cup_{i = 1}^{m}\overline{B} (a_{i},r_{i})\), we show that, \(\partial \Omega\subset\) \(\cup_{i = 1}^{m}\overline{B} (a_{i},r_{i})\).
we denote \(\omega_{n}^{i} = \Omega_{n}\cap B(a_{i},r_{i})\) and \(\omega^{i} = \Omega \cap B(a_{i},r_{i})\), so \(\omega_{n}^{i}\) has \(B_{i}\)-GNP, and \(\omega_{n}^{i}\xrightarrow{H}\omega^{i}\), also has \(B_{i}\)-GNP by Theorem 2.19, then by Proposition 2.21 \(\partial \omega_{n}^{i}\xrightarrow{H}\partial \omega^{i}\), so Lemma 2.13 \(\cup_{i = 1}^{m}\partial \omega_{n}^{i}\xrightarrow{H}\cup_{i = 1}^{m}\partial \omega^{i}\).And \(\cup_{i = 1}^{m}\partial \omega_{n}^{i}\subset \cup_{i = 1}^{m}\overline{B} (a_{i},r_{i})\) by Lemma 2.13 \(\cup_{i = 1}^{m}\partial \omega^{i}\subset\) \(\cup_{i = 1}^{m}\overline{B} (a_{i},r_{i})\),then \(\partial \omega \subset \cup_{i = 1}^{m}\partial \omega^{i}\subset \cup_{i = 1}^{m}\overline{B} (a_{i},r_{i})\),so \(\Omega \in \mathcal{O}_{m}\),and \(\omega^{i}\) has \(B_{i}\)-GNP for all \(i\in \{1,\dots,m\}\),then \(\Omega \subset \mathcal{O}(m)\)

Let \(\Omega_{n}\) be a sequence of open subsets of \(\mathcal{O}_{NC}(m)\) and \(\Omega\) be an open subset of \(D\) with \(\Omega_{n}\xrightarrow{H}\Omega\). So for all \(i\in \{1,\dots,m\}\) \(B(a_{i},r_{i})\cap \Omega_{n}\xrightarrow{H}B(a_{i},r_{i})\cap \Omega\), then by Theorem 2.25 we get the result.
\end{proof}

\subsection{Equivalence between the three convergences}

Now, we demonstrate the equivalence of convergences in the context of (Hausdorff, compact, and characteristic functions) for \(\mathcal{O}(m)\).
\begin{theorem}
Let \(\Omega_{n}\) be a sequence of open subsets of \(\mathcal{O}^{r}(m)\), then there exist an open subset \(\Omega\) of \(D\) and a subsequence also denoted by \(\Omega_{n}\) such that:
\begin{enumerate}
\item \(\Omega_{n}\xrightarrow{H}\Omega\)
\item \(\Omega_{n}\xrightarrow{K}\Omega\)
\item \(\Omega_{n}\xrightarrow{L}\Omega\)
\end{enumerate}
\end{theorem}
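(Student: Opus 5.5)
We follow the scheme used for Theorem 4.6, reading \(\mathcal{O}^{r}(m)\) as the class \(\mathcal{O}(m)\) of Definition 5.1. The plan has three parts: extract a convergent subsequence, show Hausdorff limits in \(\mathcal{O}(m)\) also converge in the other two senses, and show each of the other two modes forces Hausdorff convergence.

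\textbf{Existence of the limit.} Since every \(\Omega_n\subset D\) and \(D\) is bounded, the Hausdorff compactness of compact subsets of \(\bar D\) gives a subsequence with \(\Omega_n\xrightarrow{H}\Omega\) for some open \(\Omega\subset D\). Theorem 5.2 then gives \(\Omega\in\mathcal{O}(m)\). Write \(\omega_n^i=\Omega_n\cap B(a_i,r_i)\) and \(\omega^i=\Omega\cap B(a_i,r_i)\). By Lemma 2.14, \(\omega_n^i\xrightarrow{H}\omega^i\) for each \(i\), and each \(\omega_n^i\) and \(\omega^i\) has the \(B_i\)-GNP.

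\textbf{(1) \(\Rightarrow\) (2).} The first condition of Definition 2.16 is Proposition 2.15(1). For the second, take a compact \(L\subset\bar\Omega^{c}\) and split it into \(L\cap\bar B(a_i,r_i)\) for \(i=1,\dots,m\) and the part \(L_0\) outside \(\cup_i B(a_i,r_i)\). Near each ball, Theorem 2.19 applied to the \(B_i\)-GNP sets \(\omega^i_n\) gives compact convergence, so \(L\cap\bar B(a_i,r_i)\subset(\bar\omega_n^i)^c\) for large \(n\). Away from the balls, \(\partial\Omega_n\) lies in \(\cup_i\bar B(a_i,r_i)\). Hence on each connected component of \(D\setminus\cup_i\bar B(a_i,r_i)\), the set \(\Omega_n\) either contains the whole component or misses it. The status of each component is decided by Hausdorff convergence. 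Since \(L_0\) meets only finitely many components that avoid \(\bar\Omega\), we get \(L_0\subset\bar\Omega_n^c\) for large \(n\).

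\textbf{(1) \(\Rightarrow\) (3).} For this implication, \(\chi_{\Omega_n}\) is the sum of \(\sum_i\chi_{\omega^i_n}\) and the indicator of a union of components of \(D\setminus\cup_i\bar B(a_i,r_i)\), up to the null set \(\cup_i\partial B(a_i,r_i)\). Each \(\chi_{\omega_n^i}\to\chi_{\omega^i}\) in \(L^1\) by Theorem 2.19(3). The outer part stabilizes for large \(n\), as in the previous step. This gives \(L^1\) convergence, and \(L^p_{loc}\) convergence for finite \(p\) follows because the functions are characteristic functions.

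\textbf{(2) \(\Rightarrow\) (1) and (3) \(\Rightarrow\) (1).} For the reverse implications, we extract a Hausdorff-convergent subsequence with limit \(\tilde\Omega\in\mathcal{O}(m)\). By the implications just proved, \(\tilde\Omega\) is also a \(K\)- or \(L\)-limit. We then argue as in Propositions 3.6 and 3.8 of [1]: the limit is unique because \(\Omega\) and \(\tilde\Omega\) are Carath\'eodory. This holds since each \(\omega^i\) is a GNP set, hence Carath\'eodory, and the outer part is a union of components with no boundary there. Applying this to every subsequence shows the whole sequence converges in \(H\).

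\textbf{Main obstacle.} The delicate step is the region outside the balls, where neither \(\Omega_n\) nor \(\partial\Omega_n\) is controlled by the GNP. It must be handled through the component argument, using \(\partial\Omega_n\subset\cup_i\bar B(a_i,r_i)\), and we also need \(|\partial\Omega|=0\), which holds because \(\partial\Omega\subset\cup_i\partial\omega^i\) and each \(\partial\omega^i\) is Lipschitz outside \(\bar B_i\) with null measure.
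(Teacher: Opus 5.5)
For what the statement actually asserts (one subsequence converging to the same \(\Omega\) in the \(H\), \(K\) and \(L\) senses), your forward argument is sound, and it takes a genuinely different route from the paper. The paper gives no argument of its own. It only says that the proofs of Propositions 3.4, 3.6, 3.7 and 3.8 of [1] carry over, which implicitly treats \(\mathcal{O}(m)\) as if it had the global structure of \(\mathcal{O}_C\). You localize instead. Inside each \(B(a_i,r_i)\) you apply the \(\mathcal{O}_C\) theory (Theorem 2.19) to \(\omega_n^i\). Outside the balls you use \(\partial\Omega_n\subset\bigcup_i\bar B(a_i,r_i)\) to write \(\Omega_n\) as a union of some of the finitely many components \(U\) of \(D\setminus\bigcup_i\bar B(a_i,r_i)\), and which components are included freezes under \(H\)-convergence. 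This isolates exactly the region the citation to [1] passes over, where no GNP is available.

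Two small repairs are needed. First, your bound on \(L\cap\bar B(a_i,r_i)\) only keeps it away from \(\bar\omega_n^i\), not from \(\bar\Omega_n\). To finish the \(K\)-step, use \(\bar\Omega_n=\bigcup_j\bar\omega_n^j\cup\bigcup_{U\subset\Omega_n}\bar U\), which holds because a point of \(\Omega_n\) on a sphere lies in some \(\bar\omega_n^j\). Second, like the paper's proof of Theorem 5.2, you treat \(B_i\) as independent of \(n\), but Definition 5.1 lets it depend on \(\Omega_n\). You should first extract a subsequence along which these balls converge, since Theorem 2.19 is a fixed-\(C\) result.

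Your paragraph on (2)\(\Rightarrow\)(1) and (3)\(\Rightarrow\)(1) is not needed for this statement, and as written it is false. Sets in \(\mathcal{O}(m)\) need not be Carath\'eodory, because an exterior component can be glued to a ball along a sphere with part of that sphere left out. Here is a counterexample in the plane:
\begin{enumerate}
\item Take three mutually tangent disks \(B(a_i,r_i)\) enclosing a curvilinear triangle \(T\).
\item Let \(\Omega'=\operatorname{int}\bigl(\overline{T\cup\bigcup_i B(a_i,r_i)}\bigr)\).
\item Let \(\Omega=\Omega'\setminus\sigma\), where \(\sigma\) is a closed subarc of the open arc separating \(T\) from \(B(a_1,r_1)\).
\end{enumerate}
Both sets are connected and lie in \(\mathcal{O}(3)\). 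Indeed, each trace on \(B(a_i,r_i)\) is the whole disk, which has the \(B(a_i,r_i/2)\)-GNP, and both boundaries lie in \(\bigcup_i\bar B(a_i,r_i)\). Yet \(\operatorname{int}(\bar\Omega)=\Omega'\neq\Omega\).

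The constant sequence \(\Omega_n=\Omega'\) converges to \(\Omega\) in the \(K\) sense, since \(\bar\Omega=\bar\Omega'\), and in the \(L\) sense, since \(\Omega'\setminus\Omega=\sigma\) is null. Its \(H\)-limit, however, is \(\Omega'\). So your uniqueness step fails, and the reverse implications do not hold in \(\mathcal{O}(m)\). Simply drop that paragraph.
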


\begin{proof}
The proof is the same as in propositions 3.4, 3.6, 3.7 and 3.8 [1].
\end{proof}

\section{Concluding remarks}
\begin{remark}
We cannot replace the condition \(\forall x^{1}\in \bar{\Omega}^{1}d(x^{1},\bar{\Omega}^{2})\geq \delta\) in Definition 4.1 (resp. \(\forall x\in \bar{\Omega}^{1}d(x,P_{\bar{\Omega}^{2}}(x))\geq \delta\) in Definition 4.2) by \(\bar{\Omega}_{1}\cap \bar{\Omega}_{2} = \emptyset\) (which allows us to prove that \(\Omega \subset \Omega^{1}\cup \Omega^{2}\)).
In fact,
\[\mathcal{O}_{1,2} = \{D\supset \Omega = \Omega^{1}\cup \Omega^{2} / \bar{\Omega}_{1}\cap \bar{\Omega}_{2} = \emptyset \Omega^{1}\in \mathcal{O}_{C_{1}},\Omega^{2}\in \mathcal{O}_{C_{2}}\}\]
is not compact for Hausdorff convergence, as the following example shows. Let \(\Omega_{n} = \Omega_{n}^{1}\cup \Omega_{n}^{2}\) in \(\mathbb{R}^{2}\), such that \(\Omega_{n}^{1} = B(O,1 - \frac{1}{n})\) and \(\Omega_{n}^{2} = B(x,1 - \frac{1}{n})\) with \(O = (0,0)\), \(x = (2,0)\).
Then \(\Omega_{n}^{1}\xrightarrow{H}\Omega^{1} = B(O,1)\) and \(\Omega_{n}^{2}\xrightarrow{H}\Omega^{2} = B(x,1)\). So \(\bar{\Omega}_{n}^{1}\cap \bar{\Omega}_{n}^{2} = \emptyset\) but \(\bar{\Omega}^{1}\cap \bar{\Omega}^{2}\neq \emptyset\).
\end{remark}

\begin{remark}
One can generalize the results in Section 5 to the union of more than two disjoint domains.
\end{remark}
The compactness results established in Sections 2, 4, and 5 provide the necessary framework to ensure the existence of optimal solutions in these contexts.

\begin{remark}\textbf{Applications}
\begin{itemize}
  \item \textbf{Thermal Insulation Design.}
Let $C$ be a fixed hot core (e.g., a reactor) contained in a bounded design domain $D$. The problem of finding the optimal shape of an insulating layer $\Omega \in \mathcal{O}_C$ to minimize heat dissipation can be formulated as minimizing the Dirichlet energy $J(\Omega) = \int_{\Omega} |\nabla u_{\Omega}|^2 dx$ subject to $-\Delta u_{\Omega} = f$. The $C$-GNP condition ensures that the insulation uniformly surrounds the core without forming fragile irregularities, and the compactness of $\mathcal{O}_C$ guarantees the existence of an optimal insulator.
  \item \textbf{Minimal Surface Tension (Droplet Modeling).}
This application relates directly to the perimeter minimization discussed in Section 2. Consider a liquid drop enclosing a rigid nucleus $C$. The shape is driven by surface tension, seeking to minimize the perimeter $P(\Omega)$ for a fixed volume. The constraint that the inward normal ray must intersect $C$ (the $C$-GNP property) physically corresponds to a stability condition preventing the droplet from pinching off or detaching from the nucleus.
  \item \textbf{Acoustic Noise Reduction.}
In acoustics, one often seeks to design a cavity $\Omega$ to maximize the first eigenvalue $\lambda_1(\Omega)$ of the Laplacian (to shift resonance frequencies). The class $\mathcal{O}_{NC}$ (Section 2.4) is particularly useful here, as it allows for domains defined by local normal properties rather than a global convex reference. This flexibility permits the optimization of shapes with complex, non-convex boundaries (e.g., anechoic wedges) necessary for effective wave damping.
  \item \textbf{Optimization of Disjoint Components.}
The class $\mathcal{O}_{1,2}^{\delta}$ introduced in Section 5 is essential for designing systems with distinct, non-overlapping components, such as a heat source and a heat sink in a micro-device. The condition $d(\Omega^1, \Omega^2) \ge \delta$ ensures a mandatory physical separation (e.g., to prevent thermal bridging or short circuits). Theorem 5.4 guarantees that the limit of a sequence of such valid designs maintains this critical separation.
\end{itemize}

\end{remark}

\textbf{Declarations}
The remark concerning the applications is generated by IA.\\
The author has no relevant financial or non-financial interests to declare.The author has no competing interests to declare that are relevant to the content of this article.\\Available data in https://www.researchgate.net/profile/Mohammed-Barkatou.

\bibliographystyle{plain}

\end{document}